\newtheorem{theorem}{Theorem}
\newaliascnt{lemma}{theorem}
\newtheorem{lemma}[lemma]{Lemma}
\newaliascnt{corollary}{theorem}
\newtheorem{corollary}[corollary]{Corollary}
\newaliascnt{proposition}{theorem}
\newtheorem{proposition}[proposition]{Proposition}
\newaliascnt{remark}{theorem}
\newtheorem{remark}[remark]{Remark}
\newaliascnt{fact}{theorem}
\newtheorem{fact}[fact]{Fact}
\theoremstyle{definition}
\newtheorem*{acknowledgements}{Acknowledgements}
\newtheorem*{notation}{Notation}
\newaliascnt{definition}{theorem}
\newtheorem{definition}[definition]{Definition}
\newcommand{\Z}{\mathbb{Z}}
\newcommand{\R}{\mathbb{R}}
\newcommand{\C}{\mathbb{C}}
\newcommand{\bG}{\mathbb{G}}
\newcommand{\bA}{\mathbb{A}}
\newcommand{\bL}{\mathbb{L}}
\newcommand{\cO}{\mathcal{O}}
\DeclareMathOperator{\Spec}{Spec}
\DeclareMathOperator{\Trop}{Trop}
\DeclareMathOperator{\relint}{relint}
\DeclareMathOperator{\init}{in}
\DeclareMathOperator{\supp}{supp}
\newcommand{\Var}{\mathbf{Var}}
\newcommand{\cM}{\mathcal{M}}
\newcommand{\cB}{\mathcal{B}}
\newcommand{\cA}{\mathcal{A}}
\newcommand{\Gr}{\mathrm{Gr}}
\DeclareMathOperator{\Proj}{Proj}
\DeclareMathOperator{\Hom}{Hom}
\title[]{Hyperplane arrangements and mixed Hodge numbers of the Milnor fiber}
\author{Max Kutler and Jeremy Usatine}
\address{Max Kutler, Department of Mathematics, University of Kentucky}
\email{max.kutler@uky.edu}
\address{Jeremy Usatine, Department of Mathematics, Yale University}
\email{jeremy.usatine@yale.edu}
\begin{document}
\maketitle

\begin{abstract}
For each complex central essential hyperplane arrangement $\mathcal{A}$, let $F_{\mathcal{A}}$ denote its Milnor fiber. We use Tevelev's theory of tropical compactifications to study invariants related to the mixed Hodge structure on the cohomology of $F_{\mathcal{A}}$. We prove that the map taking each arrangement $\mathcal{A}$ to the Hodge-Deligne polynomial of $F_{\mathcal{A}}$ is locally constant on the realization space of any loop-free matroid. When $\mathcal{A}$ consists of distinct hyperplanes, we also give a combinatorial description for the homotopy type of the boundary complex of any simple normal crossing compactification of $F_{\mathcal{A}}$. As a direct consequence, we obtain a combinatorial formula for the top weight cohomology of $F_{\mathcal{A}}$, recovering a result of Dimca and Lehrer.
\end{abstract}

\numberwithin{theorem}{section}
\numberwithin{lemma}{section}
\numberwithin{corollary}{section}
\numberwithin{proposition}{section}
\numberwithin{remark}{section}
\numberwithin{fact}{section}
\numberwithin{definition}{section}

\section{Introduction}

Let $H_1, \dots, H_n$ be a central essential arrangement of hyperplanes in $\C^d$, and let $f_1, \dots, f_n$ be linear forms defining $H_1, \dots, H_n$, respectively. Then the \emph{Milnor fiber} $F$ of the arrangement is defined to be the subvariety of $\C^d$ defined by $f_1 \cdots f_n - 1$. Because $f_1 \cdots f_n$ is homogeneous, a result of Milnor implies that $F$ is diffeomorphic to the topological Milnor fiber of $f_1 \cdots f_n$ at the origin \cite{Milnor}. There has been much interest in understanding how the invariants that arise in singularity theory, such as the Milnor fiber $F$, vary as hyperplane arrangements vary with fixed combinatorial type. A major open conjecture predicts that the Betti numbers of $F$ are combinatorial invariants. Randell has shown that the diffeomorphism type of $F$ remains constant in smooth families of a fixed combinatorial type \cite{Randell}. Budur and Saito proved that a related invariant, the Hodge spectrum, is a combinatorial invariant \cite{BudurSaito}. On the other hand, Walther has recently shown that the Bernstein-Sato polynomial is not a combinatorial invariant \cite{Walther}. Dimca and Lehrer have also recently studied the mixed Hodge structure on the cohomology of $F$ \cite{DimcaLehrer2012, DimcaLehrer2016}. We refer to \cite{Suciu} for a survey on related topics.

In this paper, we use Tevelev's theory of tropical compactifications \cite{Tevelev} to study invariants related to the mixed Hodge structure on the cohomology of $F$. We show that the Hodge-Deligne polynomial of $F$ remains constant as we vary the arrangement $H_1, \dots, H_n$ within the same connected component of a matroid's realization space. We also give a combinatorial description for the homotopy type of any boundary complex of $F$, and this gives a combinatorial formula for the top weight cohomology of $F$.

\subsection{Statement of main results}

Throughout this paper, $k$ will be an algebraically closed field.

Let $d, n \in \Z_{>0}$, let $\mu_n \subset k^\times$ be the group of $n$-th roots of unity, and let $\Gr_{d,n}$ be the Grassmannian of $d$-dimensional linear subspaces in $\bA_k^n = \Spec(k[x_1, \dots, x_n])$. For each $\cA \in \Gr_{d,n}(k)$, let $X_\cA \subset \bA_k^n$ be the corresponding linear subspace, and let $F_\cA$ be the scheme theoretic intersection of $X_\cA$ with the closed subscheme of $\bA_k^n$ defined by $(x_1 \cdots x_n - 1)$. Endow $F_\cA$ with the restriction of the $\mu_n$-action on $\bA_k^n$ where each $\xi \in \mu_n$ acts by scalar multiplication. If $X_\cA$ is not contained in a coordinate hyperplane of $\bA_k^n$, then the restriction of the coordinates $x_i$ define a central essential hyperplane arrangement in $X_\cA$, and $F_\cA$ with its $\mu_n$-action is that arrangement's Milnor fiber with its monodromy action.

\begin{remark}
If $H_1, \dots, H_n$ is a central essential arrangement of hyperplanes in $\bA_k^d$, any choice of linear forms defining the $H_i$ defines a linear embedding of $\bA_k^d$ into $\bA_k^n$, and $H_1, \dots, H_n$ is the hyperplane arrangement associated to the resulting linear subspace of $\bA_k^n$. Thus we lose no generality by studying the hyperplane arrangements associated to $d$-dimensional linear subspaces of $\bA_k^n$.
\end{remark}

Let $\cM$ be a rank $d$ loop-free matroid on $\{1, \dots, n\}$, and let $\Gr_\cM \subset \Gr_{d,n}$ be the locus parametrizing linear subspaces whose associated hyperplane arrangements have combinatorial type $\cM$. Our first main result concerns how certain additive invariants of $F_\cA$ vary as $\cA$ varies within a connected component of $\Gr_\cM$. By \emph{additive invariants}, we mean invariants that satisfy the cut and paste relations.

Let $K_0^{\mu_n}(\Var_k)$ be the $\mu_n$-equivariant Grothendieck ring of $k$-varieties, let $\bL \in K_0^{\mu_n}(\Var_k)$ be the class of $\bA_k^1$ with trivial $\mu_n$-action, and for any separated finite type $k$-scheme $Y$ with good $\mu_n$-action, let $[Y, \mu_n] \in K_0^{\mu_n}(\Var_k)$ denote the class of $Y$ with its $\mu_n$-action. Let $\Z[\bL]$ denote the polynomial ring over the symbol $\bL$, and endow $K_0^{\mu_n}(\Var_k)$ with the $\Z[\bL]$-algebra structure given by $\bL \mapsto \bL$.

\begin{remark}
We mention another interpretation of the additive invariants of $F_\cA$ with its $\mu_n$-action. We note that by additive invariants of a $k$-variety with $\mu_n$-action, we mean those invariants defined by group homomorphisms from the $\mu_n$-equivariant Grothendieck ring of $k$-varieties. Let $k$ have characteristic 0, and let $K_0^{\hat\mu}(\Var_k) = \varinjlim_\ell K_0^{\mu_\ell}(\Var_k)$. Then \cite[Theorem 4.1.1]{NicaisePayne} implies that the canonical inclusion $K_0^{\mu_n}(\Var_k) \hookrightarrow K_0^{\hat\mu}(\Var_k)$ takes $[F_\cA, \mu_n]$ to the motivic nearby fiber of the arrangement associated to $\cA$. Therefore the additive invariants of $[F_\cA, \mu_n]$ are in fact determined by the motivic nearby fiber of the arrangement.
\end{remark}

\begin{definition}
Let $P$ be a $\Z[\bL]$-module, and let $\nu: K_0^{\mu_n}(\Var_k) \to P$ be a $\Z[\bL]$-module morphism. We say that $\nu$ is \emph{constant on smooth projective families with $\mu_n$-action} if the following always holds.
\begin{itemize}
\item If $S$ is a connected separated finite type $k$-scheme with trivial $\mu_n$-action and $X \to S$ is a $\mu_n$-equivariant smooth projective morphism from a scheme $X$ with $\mu_n$-action, then the map $S(k) \to P: s \mapsto \nu[X_s, \mu_n]$ is constant, where $X_s$ denotes the fiber of $X \to S$ over $s$.
\end{itemize}
\end{definition}

We can now state our first main result and its direct corollary.

\begin{theorem}
\label{additiveinvariantsmilnorfiber}
Let $P$ be a torsion-free $\Z[\bL]$-module, let $\nu: K_0^{\mu_n}(\Var_k) \to P$ be a $\Z[\bL]$-module morphism that is constant on smooth projective families with $\mu_n$-action, and assume that the characteristic of $k$ does not divide $n$. 

If $\cA_1, \cA_2 \in \Gr_\cM(k)$ are in the same connected component of $\Gr_\cM$, then
\[
	\nu[F_{\cA_1}, \mu_n] = \nu[F_{\cA_2}, \mu_n].
\]
\end{theorem}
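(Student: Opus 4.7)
Fix a connected component $U$ of $\Gr_\cM$; it suffices to show that $\cA \mapsto \nu[F_\cA, \mu_n]$ is constant on $U(k)$. The plan is to realize each Milnor fiber, uniformly over $U$, as the open complement of a simple normal crossing divisor in a smooth projective variety, in such a way that every closed boundary stratum fits into a smooth projective $\mu_n$-equivariant family over $U$; an inclusion-exclusion in $K_0^{\mu_n}(\Var_k)$ will then reduce the statement to the hypothesis on $\nu$. Every $F_\cA$ sits inside the subtorus $T := \ker\bigl(\prod x_i \colon (k^\times)^n \to k^\times\bigr)$, and its tropicalization in the cocharacter lattice of $T$ is the intersection of the Bergman fan $\cB(\cM)$ with $\{\sum u_i = 0\}$, and hence depends only on $\cM$. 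Choose once and for all a smooth projective fan $\Sigma$ supported on this tropical locus, let $X(\Sigma)$ be the associated smooth projective $T$-toric variety, and observe that the monodromy $\mu_n$-action on each $F_\cA$ is the restriction of the $T$-action via the diagonal inclusion $\mu_n \hookrightarrow T$.

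Let $\cX_U \to U$ be the tautological family of linear subspaces, set $\cF_U := \cX_U \cap V(x_1 \cdots x_n - 1)$, and let $\overline{\cF}_U \subset X(\Sigma) \times U$ be the schematic closure of $\cF_U$; for each cone $\sigma \in \Sigma$, set $\cD_\sigma := \overline{\cF}_U \cap (V(\sigma) \times U)$, where $V(\sigma)$ is the corresponding orbit closure. Tevelev's theorem applied fiberwise says that each $\overline{F}_\cA$ is smooth and meets every $T$-orbit properly and transversally, so each $(D_\sigma)_\cA$ is smooth projective of dimension $\dim F_\cA - \dim \sigma$. Since the combinatorics of the boundary stratification are fixed by $\cM$ and $\Sigma$, the Hilbert polynomials of the fibers are constant along $U$, forcing $\cD_\sigma \to U$ to be flat and hence smooth. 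All of these families are $\mu_n$-equivariant with trivial action on the base, since $\mu_n$ acts through $T$. Applying the hypothesis to each $\cD_\sigma \to U$ (with $\sigma = \{0\}$ recovering $\overline{\cF}_U$ itself), I conclude that $\cA \mapsto \nu[(D_\sigma)_\cA, \mu_n]$ is constant on $U(k)$. The standard simple normal crossing inclusion-exclusion
\[
    [F_\cA, \mu_n] = \sum_{\sigma \in \Sigma} (-1)^{\dim \sigma} [(D_\sigma)_\cA, \mu_n]
\]
in $K_0^{\mu_n}(\Var_k)$ then expresses $\nu[F_\cA, \mu_n]$ as a fixed $\Z[\bL]$-linear combination of quantities constant on $U(k)$, proving the theorem.

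The main obstacle I anticipate is promoting Tevelev's fiberwise theorem to a genuine family statement: namely, that the relative closure $\overline{\cF}_U \to U$ and each of its boundary strata $\cD_\sigma \to U$ are smooth and projective as morphisms, not only fiberwise. Flatness must be extracted from constancy of Hilbert polynomials, and some care is required because $U \subset \Gr_\cM$ need not itself be smooth; this is where the hypothesis $\mathrm{char}(k) \nmid n$ becomes essential, since it makes $\mu_n$ étale and ensures the equivariant stratification behaves as expected.
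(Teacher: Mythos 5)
Your proof takes a genuinely different route from the paper's. The paper passes through the formula $(\bL-1)^{\dim\Delta}[F_\cA^\Delta,\mu_n]=\sum_\sigma(\bL-1)^{\dim\Delta-\dim\sigma}[F_{\cA_{w_\sigma}},\mu_n]$ relating the compactification to the \emph{open} orbit strata $F_\cA^\Delta\cap\cO_\sigma\cong\init_{w_\sigma}F_\cA/(\bG_m)^{\dim\sigma}$, identifies these open strata (up to torus factors) with Milnor fibers $F_{\cA_{w_\sigma}}$ of matroids $\cM_{w_\sigma}$ with strictly fewer bases when $\cM_{w_\sigma}\neq\cM$, and then runs an \emph{induction on the number of bases of $\cM$}, at the last step dividing by the polynomial $q(\bL)$ collecting the cones with $\cM_{w_\sigma}=\cM$; this division is exactly where the torsion-freeness of $P$ enters. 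You instead apply the inclusion-exclusion $[F_\cA,\mu_n]=\sum_{\sigma}(-1)^{\dim\sigma}[F_\cA^\Delta\cap V(\sigma),\mu_n]$ over the \emph{closed} strata, each of which is smooth and projective, which is cleaner and (notably) would make the torsion-freeness hypothesis superfluous. The trade-off is that your route needs the family $\cD_\sigma\to U$ to be a smooth projective $\mu_n$-equivariant morphism for \emph{every} $\sigma$, whereas the paper only proves this for the total space $F^{\overline{\Delta}}\to S$ (Proposition in Section 7) and deliberately routes around needing it for the boundary.

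There are two concrete gaps in your argument as written. First, you define $\overline{\cF}_U$ as the schematic closure of $\cF_U$ in $X(\Sigma)\times U$ and then implicitly assume that its fiber over each $s$ agrees with $F_{\cA_s}^\Delta$; schematic closure does not commute with passing to fibers in general. The paper avoids this by constructing the family through explicit generators ($\chi^{u}f_i$ in Section 7) and verifying the fibers cone by cone, which is a substantial part of the work. Second, your flatness step (``the combinatorics are fixed by $\cM$ and $\Sigma$, so the Hilbert polynomials are constant, hence flat, hence smooth'') is asserted but not proved: constancy of Hilbert polynomials of $F_{\cA_s}^\Delta\cap V(\sigma)$ is a statement about Chow classes of tropical compactifications, not a direct consequence of ``fixed combinatorics.'' It is true, but it requires an argument (for instance, the Chow class of a sch\"on compactification in a fixed toric variety is determined by the tropical variety with multiplicities, which depends only on $\cM$); alternatively one can adapt the paper's complete-intersection/Cohen--Macaulay argument from Section 7 stratum-by-stratum. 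You also need to reduce first to a smooth connected affine base $S$ mapping to $\Gr_\cM$ (as the paper does via normalizing a curve through $\cA_1,\cA_2$); you flag the issue that $U$ may be singular but do not resolve it. If these are filled in, your proof goes through and in fact slightly strengthens the theorem by dropping the torsion-freeness assumption.
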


\begin{corollary}
If $k = \C$ and $\cA_1, \cA_2 \in \Gr_\cM(k)$ are in the same connected component of $\Gr_\cM$, then the Hodge-Deligne polynomial of $F_{\cA_1}$ is equal to the Hodge-Deligne polynomial of $F_{\cA_2}$.
\end{corollary}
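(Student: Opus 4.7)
The plan is to realize the Hodge-Deligne polynomial as a $\Z[\bL]$-module morphism $\nu : K_0^{\mu_n}(\Var_\C) \to \Z[u,v]$ that is constant on smooth projective families, and then invoke \autoref{additiveinvariantsmilnorfiber} directly.

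First I would recall that the Hodge-Deligne polynomial defines a ring homomorphism $E : K_0(\Var_\C) \to \Z[u,v]$ sending $[X] \mapsto \sum_{p,q} e^{p,q}(X) u^p v^q$, where $e^{p,q}(X) = \sum_i (-1)^i h^{p,q}(H^i_c(X,\C))$; this is a classical theorem of Deligne. Composing with the forgetful homomorphism $K_0^{\mu_n}(\Var_\C) \to K_0(\Var_\C)$ yields a ring homomorphism $\nu$, and in particular a $\Z[\bL]$-module morphism once we give $\Z[u,v]$ the $\Z[\bL]$-algebra structure sending $\bL \mapsto uv$ (this matches $E(\bL) = E[\bA^1_\C] = uv$). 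The target $\Z[u,v]$ is torsion-free over $\Z[\bL] = \Z[uv]$ since $\Z[u,v]$ is a free $\Z[uv]$-module.

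Next I would verify that $\nu$ is constant on smooth projective families with $\mu_n$-action. Given a $\mu_n$-equivariant smooth projective morphism $X \to S$ with $S$ connected, forgetting the equivariance gives a smooth projective family over a connected base. By Ehresmann's theorem the fibers $X_s$ are all diffeomorphic, and for smooth projective complex varieties the Hodge numbers $h^{p,q}(X_s)$ are deformation invariants (as a consequence of Hodge symmetry and the constancy of Betti numbers, or alternatively of the local freeness of $R^q\pi_*\Omega^p_{X/S}$ for smooth projective $\pi$). Hence $e^{p,q}(X_s) = \sum_i (-1)^i h^{p,q-i}(X_s)$ — where here $H^i_c = H^i$ and the Hodge structure is pure — is constant in $s$, so $\nu[X_s,\mu_n]$ is constant on $S(k)$.

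With these two properties established, \autoref{additiveinvariantsmilnorfiber} applies and yields
\[
	\nu[F_{\cA_1}, \mu_n] = \nu[F_{\cA_2}, \mu_n],
\]
which is exactly the equality of the Hodge-Deligne polynomials of $F_{\cA_1}$ and $F_{\cA_2}$. The only nontrivial step is the constancy on smooth projective families, which is standard Hodge theory; there is essentially no obstacle once \autoref{additiveinvariantsmilnorfiber} is in hand.
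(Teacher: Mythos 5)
Your proof is correct and matches the route the paper intends: the corollary is stated as a direct consequence of \autoref{additiveinvariantsmilnorfiber}, and you correctly verify that the Hodge--Deligne polynomial, post-composed with the forgetful map $K_0^{\mu_n}(\Var_\C) \to K_0(\Var_\C)$, is a $\Z[\bL]$-module morphism to the torsion-free module $\Z[u,v]$ (free over $\Z[uv]$) and is constant on smooth projective families. Two small remarks that do not affect the argument: the displayed formula should read $e^{p,q}(X_s)=\sum_i(-1)^i h^{p,q}(H^i(X_s))=(-1)^{p+q}h^{p,q}(X_s)$ rather than $\sum_i(-1)^i h^{p,q-i}(X_s)$, and Ehresmann's theorem presupposes $S$ smooth whereas the paper's definition allows an arbitrary connected base, so one should instead argue via upper semicontinuity of $h^q(X_s,\Omega^p)$ together with local constancy of the Betti numbers (or reduce to a smooth curve through two given points, as in the proof of \autoref{familyofmilnorfibers}).
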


The varieties $F_\cA$ are not compact, so it is not immediately clear how the Hodge-Deligne polynomials of the $F_\cA$ vary in families. To address this difficulty, we show that certain tropical compactifications of the $F_\cA$ can be constructed in families. A similar idea was used in \cite[Lemma 2.13]{EsterovTakeuchi} to study the virtual Hodge numbers of non-degenerate complete intersections. We extend the $\mu_n$-action on $F_\cA$ to the boundary of these compactifications, and we compare the $\mu_n$-equivariant classes of the boundary strata to classes of Milnor fibers of related hyperplane arrangements. We then obtain \autoref*{additiveinvariantsmilnorfiber} by inducting on the number of bases in $\cM$.

In a forthcoming paper, we use \autoref*{additiveinvariantsmilnorfiber} to obtain a similar result for specializations of the Denef-Loeser motivic zeta functions of hyperplane arrangements.

Our next main result concerns the boundary complexes of compactifications of $F_\cA $ in the case where $\cA \in \Gr_\cM(k)$ and $\cM$ has no pairs of distinct parallel elements. Note that this is the case where the associated hyperplane arrangement consists of distinct hyperplanes, so in particular, $F_\cA$ is irreducible.

If $X$ is an irreducible $k$-variety and $X \subset \overline{X}$ is a simple normal crossing compactification, i.e., an open immersion into a smooth irreducible proper $k$-variety such that $\partial \overline{X} = \overline{X} \setminus X$ is a simple normal crossing divisor, then let $\Delta(\partial{\overline{X}})$ denote the dual complex of the boundary $\partial \overline{X}$. See for example \cite[Section 2]{Payne}. Thuillier proved that the homotopy type of $\Delta(\partial\overline{X})$ does not depend on the choice of simple normal crossing compactification, as long as such a compactification exists \cite{Thuillier}.

Let $\mu(\cM)$ denote the M\"{o}bius number of $\cM$. The M\"{o}bius number is equal to the absolute value of the constant term of the characteristic polynomial of $\cM$, and in particular, it depends only on the matroid $\cM$. We also refer to \cite[Chapter 4]{MaclaganSturmfels} for some equivalent definitions of the M\"{o}bius number $\mu(\cM)$.

\begin{theorem}
\label{boundarycomplexmilnorfiber}
Suppose that $\cM$ has no pairs of distinct parallel elements, let $\cA \in \Gr_\cM(k)$, and assume that the characteristic of $k$ does not divide $n$. 

Then $F_\cA$ admits a simple normal crossing compactification. Furthermore, if $F_\cA \subset \overline{F_\cA}$ is a simple normal crossing compactification, then the boundary complex $\Delta(\partial \overline{F_\cA})$ is homotopy equivalent to a wedge of $\mu(\cM)$ $(d-2)$-dimensional spheres.
\end{theorem}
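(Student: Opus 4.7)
The plan is to construct an explicit simple normal crossing compactification of $F_\cA$ via tropical geometry, compute its boundary complex directly, and then invoke Thuillier's invariance theorem to transfer the result to an arbitrary simple normal crossing compactification.

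First I would view $F_\cA$ as a closed subvariety of the subtorus $T_0 \subset (\bG_m)^n$ cut out by $x_1 \cdots x_n = 1$, whose cocharacter space is the hyperplane $N_0 := \{w \in \R^n : \sum w_i = 0\}$. Because $\cM$ is loop-free, the tropicalization of $X_\cA \cap (\bG_m)^n$ equals the Bergman fan $\Sigma_\cM$ of $\cM$, and a direct computation shows $\Trop(F_\cA) = \Sigma_\cM \cap N_0$, equivalently the image of $\Sigma_\cM$ in $\R^n / \R(1,\dots,1)$; this is a pure $(d-1)$-dimensional fan with trivial lineality. I would then choose a unimodular simplicial fan $\widetilde{\Sigma}$ supported on $\Trop(F_\cA)$ that refines $\Sigma_\cM \cap N_0$, and take $\overline{F_\cA}$ to be the closure of $F_\cA$ in the $T_0$-toric variety associated to $\widetilde{\Sigma}$. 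By Tevelev's properness criterion, $\overline{F_\cA}$ is proper over $k$.

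Second, I would show this compactification is simple normal crossing by proving that $F_\cA$ is \emph{sch\"on}, i.e., that every initial degeneration $\init_w F_\cA$ for $w \in \Trop(F_\cA)$ is smooth. For $w$ in the relative interior of a cone of $\Sigma_\cM$ indexed by a flag of proper flats, $\init_w(X_\cA \cap (\bG_m)^n)$ is (up to a torus factor) the complement of a hyperplane arrangement associated to a minor of $\cM$, which is smooth. Because $\cM$ is loop-free, the regular function $x_1 \cdots x_n$ is nonconstant on this complement, so the equation $x_1 \cdots x_n = 1$ cuts out a smooth divisor there, whence $\init_w F_\cA$ is smooth. The hypothesis that $\cM$ has no distinct parallel elements descends to the relevant minors, ensuring that each initial degeneration is itself the Milnor fiber of an arrangement of distinct hyperplanes. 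Given sch\"onness, Tevelev's theorem for sch\"on compactifications with unimodular fan structure provides that $\overline{F_\cA}$ is smooth with simple normal crossing boundary.

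Finally, I would compute the boundary complex of this specific compactification. The orbit stratification yields a bijection between nonempty intersections of boundary components of $\overline{F_\cA}$ and positive-dimensional cones of $\widetilde{\Sigma}$, with incidences matching those of $\widetilde{\Sigma}$; hence $\Delta(\partial \overline{F_\cA})$ is combinatorially the link of $\widetilde{\Sigma}$ at the origin. Since $\widetilde{\Sigma}$ refines $\Sigma_\cM \cap N_0$, this link is a subdivision of, and in particular homeomorphic to, the link of the Bergman fan modulo lineality, namely the reduced Bergman complex of $\cM$. By a result of Ardila--Klivans, this complex is homeomorphic to the order complex of the proper part of the lattice of flats of $\cM$, and by Folkman's theorem on geometric lattices, that order complex is homotopy equivalent to a wedge of $\mu(\cM)$ spheres of dimension $d-2$. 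Thuillier's invariance theorem then propagates this homotopy type to any simple normal crossing compactification of $F_\cA$. The hard part will be the sch\"onness verification, specifically the identification of $\init_w F_\cA$ with a Milnor fiber of an arrangement minor and the inductive propagation of the no-distinct-parallel-elements hypothesis through these minors.
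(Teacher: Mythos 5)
Your overall strategy coincides with the paper's: construct a tropical (sch\"on) compactification with unimodular fan structure, identify the boundary complex with the link of the Bergman fan, cite Ardila--Klivans for the homotopy type, and invoke Thuillier to transfer to an arbitrary simple normal crossing compactification. However, there is a genuine gap in the step you yourself flag as hard, and the gap is not where you think it is.

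You place the \emph{no distinct parallel elements} hypothesis in the sch\"onness verification, but sch\"onness requires nothing of the sort: once one knows $\init_w F_\cA = F_{\cA_w}$ (Theorem~\ref{initialmilnorismilnorinitial}) and that $F_{\cA_w}$ is smooth whenever the characteristic of $k$ does not divide $n$, sch\"onness holds for \emph{every} loop-free $\cM$. Where the hypothesis is actually needed is in your assertion that the boundary complex of $F_\cA^\Delta$ is combinatorially the link of the fan. That bijection and incidence-matching requires that each intersection of $F_\cA^\Delta$ with a torus orbit be \emph{irreducible}; otherwise the dual complex of the boundary has more vertices and cells than the link of the fan. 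Since these intersections are, up to a torus factor, the initial degenerations $F_{\cA_w}$, what you need is that each $F_{\cA_w}$ is irreducible for $w \in \Trop(F_\cA)$, and this is exactly what the parallel-elements hypothesis buys.

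Moreover, your proposed mechanism for this---that the no-distinct-parallel-elements condition ``descends to the relevant minors''---is false. For example, take $\cM = U_{2,3}$, which has no distinct parallel pair, and $w$ with $w_1 > w_2 = w_3$ (shifted to lie in $(1,\dots,1)^\perp$); then $\cB(\cM_w) = \{\{1,2\},\{1,3\}\}$, so elements $2$ and $3$ are parallel in $\cM_w$. What is true, and what the paper proves (Lemma~\ref{parallelafterdegenerationgivesparallelbefore}), is a weaker and more delicate statement: if $i$ has maximal weight $w_i$ and $j$ is parallel to $i$ in $\cM_w$, then $j$ is already parallel to $i$ in $\cM$. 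Consequently any element of maximal weight forms a singleton parallel class in $\cM_w$, so the greatest common divisor of parallel-class sizes in $\cM_w$ is $1$, which is the criterion for $F_{\cA_w}$ to be irreducible. You would need to find and prove this (or an equivalent) statement to close the gap; the direct ``hypothesis descends to minors'' claim does not hold and cannot be repaired as stated.
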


Because the homotopy type of $\Delta(\partial \overline{F})$ does not depend on the choice of $\overline{F}$, it suffices to find one simple normal crossing compactification whose boundary has dual complex with the desired homotopy type. We show that a certain tropical compactification of $F_\cA$ is smooth and has simple normal crossing boundary, and we show that this tropical compactification has boundary complex homeomorphic to the so-called Bergman complex of $\cM$. We then use Ardila and Klivan's computation of the homotopy type of the Bergman complex \cite{ArdilaKlivans} to obtain \autoref*{boundarycomplexmilnorfiber}.

If $k = \C$, the top weight cohomology of $F_\cA$ with rational coefficients, in the sense of mixed Hodge theory, can be computed in terms of the reduced homology of $\Delta(\partial \overline{F_\cA})$ with rational coefficients. See for example \cite[Theorem 3.1]{Hacking}. We thus get the following corollary of \autoref*{boundarycomplexmilnorfiber}, recovering a result of Dimca and Lehrer \cite[Theorem 1.3]{DimcaLehrer2012}.

\begin{corollary}
\label{milnorfibertopweightcohomology}
Suppose that $\cM$ has no pairs of distinct parallel elements, let $k = \C$, and let $\cA \in \Gr_\cM(k)$. Then the dimensions of the top weight cohomology of $F_\cA$ are given by
\[
	\dim \Gr^W_{2(d-1)}H^i(F_\cA) = \begin{cases} \mu(\cM), &i = d-1\\ 0, &\text{otherwise} \end{cases}.
\]
\end{corollary}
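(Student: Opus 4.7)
The plan is to combine \autoref{boundarycomplexmilnorfiber} with the theorem of Hacking alluded to in the paragraph just preceding the corollary. Because $k = \C$ has characteristic $0$, the characteristic hypothesis of \autoref{boundarycomplexmilnorfiber} is automatic, and that theorem applies to produce a simple normal crossing compactification $F_\cA \subset \overline{F_\cA}$ whose boundary complex $\Delta(\partial \overline{F_\cA})$ is homotopy equivalent to a wedge of $\mu(\cM)$ copies of $S^{d-2}$. In particular the reduced rational homology of this complex is $\mathbb{Q}^{\mu(\cM)}$ in degree $d-2$ and $0$ in every other degree.

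Next I would invoke \cite[Theorem 3.1]{Hacking}, which applies since $F_\cA$ is smooth and irreducible of dimension $d-1$: it is a smooth affine hypersurface in the $d$-dimensional linear subspace $X_\cA$, with smoothness immediate from the nonvanishing of the differential of $x_1 \cdots x_n - 1$ along $F_\cA$. Hacking's theorem yields an isomorphism of the form
\[
	\Gr^W_{2(d-1)} H^i(F_\cA; \mathbb{Q}) \cong \tilde{H}_{i-1}\bigl(\Delta(\partial \overline{F_\cA}); \mathbb{Q}\bigr)
\]
for every $i$. By Thuillier's homotopy invariance result recalled earlier in the excerpt, the right hand side is independent of the chosen simple normal crossing compactification, so the isomorphism records an intrinsic invariant of $F_\cA$.

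Substituting the homology computation into this isomorphism shows that $\dim_{\mathbb{Q}} \Gr^W_{2(d-1)} H^i(F_\cA; \mathbb{Q}) = \mu(\cM)$ when $i = d-1$ and $0$ otherwise, which is exactly the asserted formula.

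I do not expect a genuine obstacle: all of the geometric content has already been packaged into \autoref{boundarycomplexmilnorfiber}, and the corollary is a translation of that homotopy statement into mixed Hodge data via Hacking's theorem. The only things that require care are the bookkeeping aspects, namely matching the shift convention in Hacking's formula so that a wedge of $(d-2)$-spheres contributes precisely in cohomological degree $d-1$, and confirming that the count $\mu(\cM)$ of spheres coincides with the Möbius number in the convention used in the statement of the corollary.
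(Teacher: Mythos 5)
Your approach is exactly the one the paper takes: apply \autoref{boundarycomplexmilnorfiber} to identify the homotopy type of the boundary complex, then invoke Hacking's theorem to translate the reduced homology of that complex into top weight cohomology of $F_\cA$. Since the reduced rational homology of a wedge of $\mu(\cM)$ copies of $S^{d-2}$ is concentrated in degree $d-2$, the computation then just amounts to reading off the degree shift.

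One caution about bookkeeping, which you flagged but did not resolve correctly: the isomorphism you wrote,
\[
	\Gr^W_{2(d-1)} H^i(F_\cA; \mathbb{Q}) \cong \tilde{H}_{i-1}\bigl(\Delta(\partial \overline{F_\cA}); \mathbb{Q}\bigr),
\]
has the wrong indexing. For a smooth $m$-dimensional variety $U$ with snc compactification with boundary $D$, the correct statement is
\[
	\Gr^W_{2m} H^i(U; \mathbb{Q}) \cong \tilde{H}_{2m-i-1}\bigl(\Delta(D); \mathbb{Q}\bigr),
\]
which you can sanity-check on $U = \mathbb{P}^1 \times \bG_m$ (boundary complex two points, $\Gr^W_4 H^3 = \mathbb{Q} \cong \tilde{H}_0$, whereas your version would predict $\tilde{H}_2 = 0$) or on $U$ smooth projective ($D$ empty, $\Gr^W_{2m}H^{2m} = \mathbb{Q} \cong \tilde{H}_{-1}(\emptyset)$). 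With $m = d-1$, the correct formula says $\Gr^W_{2(d-1)}H^i(F_\cA) \cong \tilde{H}_{2(d-1)-i-1}(\Delta(\partial\overline{F_\cA}))$, which is nonzero precisely when $2(d-1)-i-1 = d-2$, i.e.\ $i = d-1$. Your version also pins the nonvanishing to $i = d-1$ because it happens that $i-1 = d-2$ iff $i = d-1$; the two indexings agree exactly at the middle degree $i = \dim F_\cA$, which is the only degree where the boundary complex has homology. So your final computation is correct, but the intermediate isomorphism as written would give wrong answers for any situation where the boundary complex has homology away from the top degree.
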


\begin{acknowledgements}
We would like to acknowledge useful discussions with Dori Bejleri, Daniel Corey, Netanel Friedenberg, Dave Jensen, Kalina Mincheva, Sam Payne, and Dhruv Ranganathan. The second named author was supported by NSF Grant DMS-1702428 and a Graduate Research Fellowship from the NSF.
\end{acknowledgements}

\section{Preliminaries}

We will set notation and recall facts about the equivariant Grothendieck ring of varieties, tropical compactifications, matroids, linear subspaces, and Milnor fibers of hyperplane arrangements.

\subsection{Equivariant Grothendieck ring of varieties}

Let $G$ be a finite group. An action of $G$ on a scheme is said to be \emph{good} if each orbit is contained in an affine open subscheme. For example, any $G$-action on a quasiprojective $k$-scheme is good.

We will recall the definition of the $G$-equivariant Grothendieck ring of varieties $K_0^{G}(\Var_k)$. As a group, $K_0^{G}(\Var_k)$ is generated by symbols $[X, G]$ for each separated finite type $k$-scheme $X$ with good $G$-action, up to $G$-equivariant isomorphism. We then obtain the group $K_0^{G}(\Var_k)$ by imposing the following relations.

\begin{itemize}

\item $[X, G] = [Y, G] + [X \setminus Y, G]$ if $Y$ is a closed $G$-equivariant subscheme of $X$.

\item $[V, G] = [W, G]$ if $V$ and $W$ are $G$-equivariant affine bundles, of the same rank and over the same separated finite type $k$-scheme, with affine $G$-action.

\end{itemize}

The ring structure of $K_0^{G}(\Var_k)$ is given by $[X, G][Y, G] = [X \times_k Y, G]$, where $X \times_k Y$ is endowed with the diagonal $G$-action. We will let $\bL \in K_0^G(\Var_k)$ denote the class of $\bA_k^1$ with trivial $G$-action. For more information on the ring $K_0^G(\Var_k)$, see for example \cite[Definition 4.1]{Hartmann}. 

In the case where $G = \mu_n \subset k^\times$ is the group of $n$th roots of unity, the ring $K_0^{\mu_n}(\Var_k)$ plays an important role in Denef and Loeser's theory of motivic zeta functions and motivic nearby fibers. In particular, the equivariant Grothendieck ring of varieties is used to encode the monodromy action for the Denef-Loeser motivic zeta function, which is related to the monodromy action on the cohomology of the topological Milnor fiber. We refer to \cite{DenefLoeser} for more on these ideas.

\subsection{Tropical compactifications}

Let $n \in \Z_{>0}$, let $M \cong \Z^n$ be a lattice, let $N = M^\vee = \Hom(M, \Z)$, and let $T = \Spec(k[M])$ be the algebraic torus with character lattice $M$.

By a \emph{cone in $N$}, we will mean a rational pointed cone in $N_\R = N \otimes_\Z \R$, and by a \emph{fan in $N$}, we will mean a fan in $N_\R$ consisting of cones in $N$.

Let $d \in \Z_{>0}$, and let $X$ be a pure dimension $d$ reduced closed subscheme of $T$. Let $\Delta$ be a fan in $N$, let $Y(\Delta)$ be the $T$-toric variety defined by $\Delta$, and let $X^\Delta$ be the closure of $X$ in $Y(\Delta)$. Then we may consider the multiplication map $T \times_k X^\Delta \to Y(\Delta)$. The fan $\Delta$ is called a \emph{tropical fan} for $X \hookrightarrow T$ if $X^\Delta$ is complete and the multiplication map $T \times_k X^\Delta \to Y(\Delta)$ is faithfully flat. If $\Delta$ is a tropical fan for $X \hookrightarrow T$, then $X^\Delta$ is called a \emph{tropical compactification}. If $\Delta$ is a tropical fan for $X \hookrightarrow T$ and the multiplication map $T \times_k X^\Delta \to Y(\Delta)$ is smooth, then $X^\Delta$ is called a \emph{sch\"{o}n compactification}. If $\Delta$ is unimodular and $X^\Delta$ is a sch\"{o}n compactification, then $X^\Delta$ is smooth and $X^\Delta \setminus X$ is a simple normal crossing divisor. If there exists some $\Delta$ such that $X^\Delta$ is a sch\"{o}n compactification, then $X$ is said to be \emph{sch\"{o}n} in $T$. It can be shown that $X$ is sch\"{o}n in $T$ if and only if all of its initial degenerations $\init_w X$ are smooth. Also, if $X$ is sch\"{o}n in $T$, then any tropical fan for $X$ gives a sch\"{o}n compactification. We refer to \cite{Tevelev} where tropical compactifications were introduced.

We recall the following elementary fact. See for example \cite[Lemma 3.6]{HelmKatz}.

\begin{fact}
Let $\Delta$ be a tropical fan for $X \hookrightarrow T$, let $\sigma \in \Delta$, and let $\cO_\sigma$ be the torus orbit of $Y(\Delta)$ associated to $\sigma$. Then for all $w \in \relint(\sigma)$, the initial degeneration $\init_w X$ is isomorphic to $\bG_{m,k}^{\dim \sigma} \times_k (X^\Delta \cap \cO_\sigma)$.
\end{fact}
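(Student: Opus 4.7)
The plan is to realize $\init_w X$ as a trivial $T_\sigma$-torsor over $X^\Delta \cap \cO_\sigma$, where $T_\sigma \subseteq T$ is the subtorus with cocharacter lattice $N \cap \R\sigma$. This subtorus has dimension $\dim\sigma$ and is the pointwise stabilizer of $\cO_\sigma$, while the quotient $T/T_\sigma$ acts simply transitively on $\cO_\sigma$. Since the initial degeneration is constant on $\relint(\sigma)$, I may replace $w$ by a suitable positive integer multiple lying in $\relint(\sigma) \cap N$, so that the one-parameter subgroup $\lambda_w : \bG_{m,k} \to T$ extends to a morphism $\bar{\lambda}_w : \bA_k^1 \to Y(\Delta)$ with $\bar{\lambda}_w(0) = x_\sigma$, the distinguished point of $\cO_\sigma$.

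Geometrically, I would first realize the degeneration inside $Y(\Delta)$. Let $\overline{\mathcal{X}} \subseteq \bA_k^1 \times_k Y(\Delta)$ be the closure of $\{(t, \lambda_w(t) \cdot x) : t \in \bG_{m,k},\ x \in X\}$ with its first projection to $\bA_k^1$. Intersecting with $\bA_k^1 \times_k T$ and applying the change of variables $y \mapsto \lambda_w(t)^{-1} y$ on the generic fiber identifies $\overline{\mathcal{X}} \cap (\bA_k^1 \times_k T)$ with the standard flat Gr\"{o}bner family deforming $X$ to $\init_w X$, so that $\overline{\mathcal{X}} \cap (\{0\} \times_k T) = \init_w X$ scheme-theoretically.

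Next I would exploit $T_\sigma$-invariance. It is standard that for $w \in \relint(\sigma)$ with $\sigma$ a cone of a tropical fan for $X$, the subscheme $\init_w X \subseteq T$ is stable under the $T_\sigma$-action. Composing the inclusion $\init_w X \hookrightarrow T$ with the quotient $T \to T/T_\sigma$, identified with $\cO_\sigma$ via $x_\sigma$, yields a $T_\sigma$-invariant morphism $\pi : \init_w X \to \cO_\sigma$ whose geometric fibers are $T_\sigma$-torsors. The central point is then to identify the image of $\pi$ with $X^\Delta \cap \cO_\sigma$. This uses the tropical fan hypothesis essentially: the faithful flatness of $m : T \times_k X^\Delta \to Y(\Delta)$ restricted over $\cO_\sigma$ both forces $\init_w X$ to map into $X^\Delta \cap \cO_\sigma$ (by tracking where the flat limit of translates lies in $Y(\Delta)$) and ensures surjectivity onto it.

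Finally, since $X^\Delta \cap \cO_\sigma$ is locally closed in the torus $\cO_\sigma$ and hence quasi-affine, every $T_\sigma$-torsor over it is trivial by Hilbert~90 for tori. Choosing a trivialization, equivalently a splitting $N = N_\sigma \oplus N'$ identifying $T \cong T_\sigma \times_k T'$, yields the desired isomorphism $\init_w X \cong \bG_{m,k}^{\dim\sigma} \times_k (X^\Delta \cap \cO_\sigma)$. The main obstacle is the third step: pinning down the image of $\pi$ as exactly $X^\Delta \cap \cO_\sigma$ requires a careful flat-limit argument using the tropical fan condition, rather than torus-invariance alone.
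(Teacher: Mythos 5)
The paper does not prove this Fact; it cites \cite[Lemma 3.6]{HelmKatz}. The closest in-paper argument is \autoref{writinginitialformasproduct}, which establishes the stronger scheme-theoretic equality $\init_w X = T_\sigma \times_k (X^\Delta \cap \cO_\sigma)$ under explicit hypotheses on the defining equations, by a purely algebraic computation using \autoref{tcideal} and \autoref{boundaryinitial}: produce generators $\chi^{u_i}f_i$ of the ideal of the closure in $Y(\sigma)$, apply $\psi_\sigma$ to get equations for $X^\Delta \cap \cO_\sigma$, pull back along $\varphi_\sigma$, and identify the result with the ideal of $\init_w X$. Your proposal instead follows the geometric route --- degeneration along a one-parameter subgroup, $T_\sigma$-invariance of $\init_w X$, identification of the image under $T \to \cO_\sigma$ --- which is closer in spirit to the Helm--Katz proof the paper cites.

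There are two problems with the proposal. The appeal to Hilbert~90 to trivialize the $T_\sigma$-torsor $\pi$ is not correct as stated: Hilbert~90 identifies $H^1(Y, \bG_{m,k})$ with the Picard group, which is in general nonzero for affine or quasi-affine $Y$, so torus torsors over such schemes need not be trivial. Fortunately you do not need it. A splitting of $T \to \cO_\sigma$ identifies $T$ with $T_\sigma \times_k \cO_\sigma$, and a $T_\sigma$-invariant closed subscheme of $T_\sigma \times_k \cO_\sigma$ is automatically of the form $T_\sigma \times_k Z'$: decompose its ideal into $T_\sigma$-weight spaces; since every character of $T_\sigma$ is a unit, all the weight pieces coincide with a single ideal $J$ of $k[\sigma^\perp \cap M]$, so the full ideal is the extension of $J$. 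Thus the torsor splits for elementary reasons, with no cohomological input. The more serious issue is the one you flag yourself: identifying the scheme-theoretic image of $\pi$ with $X^\Delta \cap \cO_\sigma$ carries essentially all of the mathematical content, and ``tracking where the flat limit of translates lies'' and ``faithful flatness ensures surjectivity'' only gesture at an argument. It is not established that the special fiber of $\overline{\mathcal{X}}$ meets $\cO_\sigma$ in a translate of $X^\Delta \cap \cO_\sigma$ with the correct scheme structure, nor how faithful flatness of the multiplication map is actually used. One way to close the gap is the paper's ideal-theoretic route through \autoref{tcideal} and \autoref{boundaryinitial}; another is to reproduce the argument in \cite{HelmKatz}. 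As written, the crucial step is asserted rather than proved.
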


We will also use the following theorem of Luxton and Qu \cite[Theorem 1.5]{LuxtonQu}.

\begin{theorem}[Luxton--Qu]
\label{LuxtonQufansupportedontropistropicalfan}
If $X$ is sch\"{o}n in $T$, then any fan $\Delta$ in $N$ that is supported on $\Trop(X)$ is a tropical fan for $X \hookrightarrow T$.
\end{theorem}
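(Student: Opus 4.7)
The plan is to verify the two defining properties of a tropical fan for $\Delta$: completeness of $X^\Delta$ and faithful flatness of the multiplication map $\Phi \colon T \times_k X^\Delta \to Y(\Delta)$. Completeness follows from a standard toric-geometry criterion of Tevelev: the closure $X^\Delta$ is proper over $k$ if and only if $\supp(\Delta) \supseteq \Trop(X)$, which is our hypothesis on $\Delta$.

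For faithful flatness I would compare $X^\Delta$ with a known good compactification, using the schön hypothesis. Pick some fan $\Delta_0$ in $N$, supported on $\Trop(X)$, such that $X^{\Delta_0}$ is a schön (hence tropical) compactification. Let $\tilde{\Delta}$ be a common refinement of $\Delta$ and $\Delta_0$; since both are supported on $\Trop(X)$, so is $\tilde{\Delta}$. Standard arguments from Tevelev's paper show that refining a tropical fan still yields a tropical compactification and that schönness is preserved under toric refinement, so $X^{\tilde{\Delta}}$ is again a tropical compactification.

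The crux is then to descend faithful flatness along the proper birational toric contraction $\pi \colon Y(\tilde{\Delta}) \to Y(\Delta)$, under which $X^{\tilde{\Delta}}$ maps surjectively onto $X^\Delta$ and $\Phi$ for $\Delta$ factors through the analogous map for $\tilde{\Delta}$. By $T$-equivariance, faithful flatness of $\Phi$ may be checked on each torus orbit $\cO_\sigma$ of $Y(\Delta)$, $\sigma \in \Delta$. For $w \in \relint(\sigma) \subseteq \Trop(X)$, the initial degeneration $\init_w X$ is intrinsic to $w$ and is smooth by the schön hypothesis; combining this with the Fact above, one sees that $X^\Delta \cap \cO_\sigma$ is smooth of pure dimension $d - \dim \sigma$ and nonempty, and that $\init_w X \cong \bG_{m,k}^{\dim\sigma} \times_k (X^\Delta \cap \cO_\sigma)$. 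Exactly this product/dimension condition is what Tevelev's criterion requires to conclude faithful flatness of the multiplication map at every point of $\cO_\sigma$.

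The main obstacle is tracking what happens when several cones of $\tilde{\Delta}$ collapse into a single cone $\sigma$ of $\Delta$ under $\pi$: one must confirm that the resulting stratum $X^\Delta \cap \cO_\sigma$ is still smooth of the correct pure dimension, rather than acquiring excess components or singularities through the contraction. The key input is precisely that the initial degenerations $\init_w X$ for $w \in \relint(\sigma)$ are all smooth of dimension $d$ and mutually isomorphic (up to the torus factor), which is supplied by the schön hypothesis and decouples the intrinsic geometry of the stratum from the choice of fan.
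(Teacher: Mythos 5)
The paper does not prove this statement; it is imported verbatim as a black box from Luxton and Qu, cited as \cite[Theorem 1.5]{LuxtonQu}. So there is no in-paper argument to compare your attempt against, and the review below concerns only the internal soundness of your sketch.

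The completeness part is fine: $X^\Delta$ is complete because $\supp(\Delta) = \Trop(X)$, by Tevelev. The faithful-flatness half, however, contains a circularity. You invoke ``the Fact above'' (that $\init_w X \cong \bG_{m,k}^{\dim\sigma} \times_k (X^\Delta \cap \cO_\sigma)$ for $w \in \relint(\sigma)$) to conclude that $X^\Delta \cap \cO_\sigma$ is smooth of pure dimension $d - \dim\sigma$, and then say this is what ``Tevelev's criterion'' needs. But the Fact is stated only under the hypothesis that $\Delta$ is already a tropical fan for $X \hookrightarrow T$ --- which is exactly the conclusion you are trying to reach. You could legitimately apply the Fact to the refinement $\tilde\Delta$ (that is tropical by Tevelev), but this only controls the strata $X^{\tilde\Delta}\cap\cO_{\tilde\sigma}$ for $\tilde\sigma\in\tilde\Delta$; transporting that control across the proper toric contraction $\pi\colon Y(\tilde\Delta)\to Y(\Delta)$ to conclude something about $X^\Delta\cap\cO_\sigma$ for $\sigma\in\Delta$ is precisely the substantive content of the Luxton--Qu theorem, and you identify it as ``the main obstacle'' without actually carrying it out.

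There is a second, related overreach in the final paragraph: the claim that the initial degenerations $\init_w X$ for $w \in \relint(\sigma)$ are ``mutually isomorphic (up to the torus factor), which is supplied by the schön hypothesis.'' The schön hypothesis gives smoothness of each individual $\init_w X$; it does not by itself say $\init_w X$ is independent of $w$ along $\relint(\sigma)$. A cone $\sigma$ of an arbitrary fan supported on $\Trop(X)$ can straddle several Gröbner cones, so a priori $\init_w X$ can change as $w$ moves in $\relint(\sigma)$. That this cannot happen (up to the expected torus factor) in the schön case is a consequence of the theorem, again via the Fact, not a free input. In short: the general strategy (refine to a tropical fan, then descend) is the right shape, but the descent step is both unjustified and, as written, implicitly assumes the conclusion.
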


\subsection{Matroids}

Let $d,n \in \Z_{>0}$, and let $\cM$ be a rank $d$ matroid on $\{1, \dots, n\}$. We will let $\cB(\cM)$ denote the set of bases in $\cM$. If $w = (w_1, \dots, w_n) \in \R^n$, we will set
\[
	\cB(\cM_w) = \{B \in \cB(\cM) \, | \, \sum_{i \in B} w_i = \max_{B' \in \cB(\cM)} \sum_{i \in B'} w_i\}.
\]
Then for all $w \in \R^n$, the set $\cB(\cM_w)$ consists of the bases of a rank $d$ matroid on $\{1, \dots, n\}$, and we will let $\cM_w$ denote that matroid. We will let $\Trop(\cM) \subset \R^n$ denote the Bergman fan of $\cM$, so
\[
	\Trop(\cM) = \{w \in \R^n \, | \, \text{$\cM_w$ is loop-free}\}.
\]
If $B \in \cB(\cM)$ and $i \in \{1, \dots, n\} \setminus B$, we will let $C(\cM, i, B)$ denote the fundamental circuit of $B$ with respect to $i$ in $\cM$, i.e., $C(\cM, i, B)$ is the unique circuit of $\cM$ that is contained in $B \cup \{i\}$.

\subsection{Linear subspaces}
\label{preliminarieslinearsubspaces}

Let $d,n \in \Z_{>0}$. We will let $\Gr_{d,n}$ denote the Grassmannian of $d$-dimensional linear subspaces in $\bA_k^n = \Spec(k[x_1, \dots, x_n])$, and for each $\cA \in \Gr_{d,n}(k)$, we will let $X_\cA \subset \bA_k^n$ denote the corresponding linear subspace. For each $w \in \R^n$ and $\cA \in \Gr_{d,n}(k)$, the ideal generated by 
\[
	\{\init_w f \, | \, \text{$f$ in the ideal defining $X_\cA$ in $\bA_k^n$}\} \subset k[x_1, \dots, x_n]
\]
defines a $d$-dimensional linear subspace in $\bA_k^n$, and we let $\cA_w \in \Gr_{d,n}(k)$ denote the point corresponding to that linear subspace.

The combinatorial type of $X_\cA$ is the rank $d$ matroid on $\{1, \dots, n\}$ in which $I \subset \{1, \dots, n\}$ is independent if and only if the set of restrictions $\{x_i|_{X_\cA} \, | \, i \in I\}$ is linearly independent. For each rank $d$ matroid $\cM$ on $\{1, \dots, n\}$, let $\Gr_\cM \subset \Gr_{d,n}$ denote the locus parametrizing linear subspaces with combinatorial type $\cM$. $\Gr_\cM$ is a locally closed subset of $\Gr_{d,n}$. We make no essential use of any particular scheme structure on $\Gr_\cM$, so we may as well consider it with its reduced structure. 

\begin{remark}
By a version of Mn\"{e}v universality, the schemes $\Gr_\cM$ can be arbitrarily singular and have arbitrarily many connected components. For a precise statement, see for example \cite[Proposition 9.7 and Theorem 9.8]{Katz}.
\end{remark}

Let $\cM$ be a rank $d$ matroid on $\{1, \dots, n\}$ and let $w \in \R^n$. For all $\cA \in \Gr_\cM(k)$, we have that $\cA_w \in \Gr_{\cM_w}(k)$. We briefly recall an elementary fact about the map $\Gr_\cM(k) \to \Gr_{\cM_w}(k): \cA \mapsto \cA_w$. Let $\Gr_{d,n} \hookrightarrow \Proj(k[y_I \, | \, I \subset \{1, \dots, n\}, \#I = d])$ be the Pl\"{u}cker embedding, and consider the rational map on the ambient projective space given in homogeneous coordinates by $(a_I)_I \mapsto (a^w_I)_I$, where $a^w_I = a_I$ if $I \in \cB(\cM_w)$ and $a^w_i = 0$ otherwise. It is straightforward to check that this rational map induces the map $\Gr_\cM(k) \to \Gr_{\cM_w}(k): \cA \mapsto \cA_w$. In particular, we get the following facts that will be used in the induction step of our proof of \autoref*{additiveinvariantsmilnorfiber}.

\begin{fact}
Let $\cM$ be a rank $d$ matroid on $\{1, \dots, n\}$, and suppose that $\cA_1, \cA_2 \in \Gr_\cM(k)$ are in the same connected component of $\Gr_\cM$. Then for all $w \in \R^n$, we have that $(\cA_1)_w, (\cA_2)_w \in \Gr_{\cM_w}(k)$ are in the same connected component of $\Gr_{\cM_w}$.
\end{fact}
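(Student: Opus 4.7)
The plan is to exploit the explicit Plücker-coordinate description of the map $\cA \mapsto \cA_w$ that the excerpt has just spelled out. Working on the ambient $\Proj(k[y_I \mid I \subset \{1,\dots,n\}, \#I = d])$ of the Plücker embedding, the rational map
\[
	\varphi_w \colon (a_I)_I \longmapsto (a^w_I)_I, \qquad a^w_I = \begin{cases} a_I, & I \in \cB(\cM_w) \\ 0, & \text{otherwise} \end{cases}
\]
is the linear projection onto the coordinate subspace indexed by $\cB(\cM_w)$, so it is defined at any point where at least one $a_I$ with $I \in \cB(\cM_w)$ is nonzero.

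The first step is to observe that this locus of definition contains all of $\Gr_\cM$. Indeed, for $\cA \in \Gr_\cM(k)$ the Plücker coordinate $a_I$ is nonzero precisely when $I \in \cB(\cM)$, and by definition $\cB(\cM_w)$ consists of the $w$-maximizing bases of $\cM$, so $\cB(\cM_w) \subseteq \cB(\cM)$ is nonempty with every element giving a nonzero Plücker coordinate on $\cA$. Hence $\varphi_w$ restricts to a morphism of $k$-schemes $\Gr_\cM \to \Gr_{d,n}$, and by the assertion recalled from the excerpt its image lies in $\Gr_{\cM_w}$, so we obtain a morphism
\[
	\psi_w \colon \Gr_\cM \longrightarrow \Gr_{\cM_w}
\]
with $\psi_w(\cA) = \cA_w$ on $k$-points.

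The second step is pure topology: $\psi_w$ is in particular continuous on underlying topological spaces, so it carries connected subsets of $\Gr_\cM$ into connected subsets of $\Gr_{\cM_w}$. If $C \subseteq \Gr_\cM$ is the connected component containing both $\cA_1$ and $\cA_2$, then $\psi_w(C)$ is a connected subset of $\Gr_{\cM_w}$ containing $(\cA_1)_w$ and $(\cA_2)_w$, so these two points lie in the same connected component of $\Gr_{\cM_w}$, as desired.

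There is essentially no obstacle here; the only point requiring mild care is the passage between connectedness on $k$-points and connectedness as schemes, which is harmless since both $\Gr_\cM$ and $\Gr_{\cM_w}$ are of finite type over the algebraically closed field $k$, and the map $\psi_w$ is constructed as a bona fide morphism of schemes before specializing to $k$-points.
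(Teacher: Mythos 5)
Your proof is correct and follows the same route the paper sketches: you use the Pl\"{u}cker-coordinate description of $\cA \mapsto \cA_w$ as a rational linear projection, show it is defined on all of $\Gr_\cM$ (since $\cB(\cM_w) \subseteq \cB(\cM)$ is nonempty and Pl\"{u}cker coordinates on $\Gr_\cM$ vanish exactly off $\cB(\cM)$), and then invoke continuity of the resulting morphism $\Gr_\cM \to \Gr_{\cM_w}$. This is precisely the argument the paper leaves implicit when it says the rational map ``induces the map $\Gr_\cM(k) \to \Gr_{\cM_w}(k)$'' and then declares the Fact an immediate consequence.
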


\begin{fact}
Let $\cM$ be a rank $d$ matroid on $\{1, \dots, n\}$, and suppose that $w \in \R^n$ is such that $\cM_w = \cM$. Then the map $\Gr_\cM(k) \to \Gr_\cM(k): \cA \mapsto \cA_w$ is the identity.
\end{fact}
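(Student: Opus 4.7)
The plan is to invoke directly the Plücker-coordinate description of the map $\cA \mapsto \cA_w$ that was just established in the paragraph preceding the fact, together with the definition of $\Gr_\cM$ in terms of nonvanishing of Plücker coordinates. The observation to make is that the hypothesis $\cM_w = \cM$ says exactly that the ``truncation'' defining the map on Plücker coordinates does not delete any coordinate that was nonzero to begin with.

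More precisely, I would first recall that for $\cA \in \Gr_\cM(k)$ with Plücker coordinates $(a_I)_I$, by the definition of the combinatorial type of $X_\cA$ we have $a_I \neq 0$ if and only if $I \in \cB(\cM)$. The preceding paragraph states that the map $\Gr_\cM(k) \to \Gr_{\cM_w}(k)\colon \cA \mapsto \cA_w$ is induced by the rational map on the Plücker ambient projective space given in homogeneous coordinates by $(a_I)_I \mapsto (a^w_I)_I$, where $a^w_I = a_I$ for $I \in \cB(\cM_w)$ and $a^w_I = 0$ otherwise.

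Now under the hypothesis $\cM_w = \cM$, we have $\cB(\cM_w) = \cB(\cM)$. For $I \in \cB(\cM_w) = \cB(\cM)$, the formula gives $a^w_I = a_I$; for $I \notin \cB(\cM_w) = \cB(\cM)$, we get $a^w_I = 0$, but also $a_I = 0$ because $\cA \in \Gr_\cM(k)$. Hence $a^w_I = a_I$ for every $I$, so the two points of the Plücker ambient projective space agree, giving $\cA_w = \cA$. In particular $\cA_w \in \Gr_\cM(k)$, and the map is the identity on $\Gr_\cM(k)$.

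There is essentially no obstacle here; the fact is a formal corollary of the Plücker description already derived. The only thing to be a little careful about is that the rational map is only defined where some $a_I$ with $I \in \cB(\cM_w)$ is nonzero, but this is automatic on $\Gr_\cM(k)$ under our hypothesis since $\cB(\cM_w) = \cB(\cM)$ and $\cA$ has at least one nonzero Plücker coordinate indexed by a basis of $\cM$.
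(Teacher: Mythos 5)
Your proof is correct and is precisely the argument the paper intends: the paper derives this fact (with no separate proof) as an immediate consequence of the Plücker-coordinate description of $\cA \mapsto \cA_w$, exactly as you spell out. Your extra remark about the rational map being defined on $\Gr_\cM(k)$ is a sensible sanity check, though not strictly needed once one has unwound the definitions.
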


We now establish notation for certain useful linear forms. If $C$ is a circuit of $\cM$ and $\cA \in \Gr_\cM(k)$, we will let $L_C^\cA \in k[x_1, \dots, x_n]$ denote a linear form in the ideal defining $X_\cA$ in $\bA_k^n$ such that the coefficient of $x_i$ in $L_C^\cA$ is nonzero if and only if $i \in C$. Such an $L_C^\cA$ exists and is unique up to nonzero scalar multiple. Once and for all, we fix such an $L_C^\cA$ for all $C$ and $\cA$.

\subsection{Milnor fibers of hyperplane arrangements}
Let $d, n \in \Z_{>0}$. We will let $\bG_{m,k}^n = \Spec(k[x_1^{\pm 1}, \dots, x_n^{\pm 1}]) \subset \bA_k^n$ denote the complement of the coordinate hyperplanes, and for each $\cA \in \Gr_{d,n}(k)$, we will let $U_\cA$ denote the intersection $X_\cA \cap \bG_{m,k}^n$. In the context of tropical geometry, we will consider each $U_\cA$ as a closed subscheme of $\bG_{m,k}^n$. For all $\cA \in \Gr_{d,n}(k)$ and all $w \in \R^n$, the initial degeneration $\init_w U_\cA$ is equal to $U_{\cA_w}$. For all rank $d$ matroids $\cM$ on $\{1, \dots, n\}$ and all $\cA \in \Gr_{\cM}(k)$, the tropicalization $\Trop(U_\cA) \subset \R^n$ is equal to $\Trop(\cM)$.

For each $\cA \in \Gr_{d,n}(k)$, we will let $F_\cA$ denote the intersection of $X_\cA$ with the closed subscheme of $\bA_k^n$ defined by $(x_1 \cdots x_n -1)$. We will endow each $F_\cA$ with the restriction of the $\mu_n$-action on $\bA_k^n$ where each $\xi \in \mu_n$ acts by scalar multiplication. In the context of tropical geometry, we will consider each $F_\cA$ as a closed subscheme of the algebraic torus $\bG_{m,k}^n$.

Note that if $\cA \in \Gr_{d,n}(k)$ is such that $X_\cA$ is not contained in any coordinate hyperplane of $\bA_k^n$, then the restrictions of the coordinates $x_i$ define a central essential hyperplane arrangement in $X_\cA$. In that case, $U_\cA$ is the complement of that hyperplane arrangement, and $F_\cA$ with its $\mu_n$-action is that arrangement's Milnor fiber with its monodromy action. Note that if $X_\cA$ has combinatorial type $\cM$ in the sense of Section \ref*{preliminarieslinearsubspaces}, then $X_\cA$ is not contained in a coordinate hyperplane if and only if the matroid $\cM$ is loop-free, in which case the corresponding central essential hyperplane arrangement in $X_\cA$ has combinatorial type $\cM$.

If $\cM$ is a rank $d$ loop-free matroid on $\{1, \dots, n\}$ and the characteristic of $k$ does not divide $n$, then $F_\cA$ is smooth and pure dimension $d-1$ for all $\cA \in \Gr_\cM(k)$.

Let $\cM$ be a rank $d$ loop-free matroid on $\{1, \dots, n\}$. The relation of being parallel in $\cM$ is an equivalence relation on $\{1, \dots, n\}$. Let $I_1, \dots, I_m \subset \{1, \dots, n\}$ be the resulting equivalence classes. If the greatest common divisor of $\{ \# I_\ell \, | \, \ell \in \{1, \dots, m\}\}$ is equal to 1, then $F_\cA$ is irreducible for any $\cA \in \Gr_{\cM}(k)$.

\section{Generators for the ideal defining a linear subspace}

Let $d, n \in \Z_{>0}$, and let $\cM$ be a rank $d$ matroid on $\{1, \dots, n\}$. We first recall the following well-known lemma. See \cite[Exercise 1.2.5]{Oxley}.

\begin{lemma}
\label{fundamentalcircuitbasisexchange}
Let $B \in \cB(\cM)$, and let $i \in \{1, \dots, n\} \setminus B$. Then
\[
	C(\cM, i, B) \setminus \{i\} = \{j \in B \, | \, (B \setminus \{j\}) \cup \{ i \} \in \cB(\cM)\}.
\]
\end{lemma}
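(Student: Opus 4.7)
The plan is to prove the two set inclusions separately using only the basic matroid axioms, since the claim is a standard fact in matroid theory (this is exactly the reason the authors only cite Oxley). The key tool will be the circuit elimination axiom together with the fact that $C(\cM,i,B)$ is the unique circuit contained in $B \cup \{i\}$, which exists because $B$ is a basis (hence maximally independent) so $B \cup \{i\}$ is dependent.

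For the inclusion $\supseteq$, suppose $j \in B$ satisfies $(B \setminus \{j\}) \cup \{i\} \in \cB(\cM)$. If $j$ were not in $C(\cM,i,B)$, then $C(\cM,i,B) \subseteq (B \setminus \{j\}) \cup \{i\}$; this is impossible since a basis is independent and cannot contain a circuit. Hence $j \in C(\cM,i,B)$, and since $i \notin B$ but $j \in B$ we have $j \neq i$, so $j \in C(\cM,i,B) \setminus \{i\}$. This direction is immediate and straightforward.

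For the inclusion $\subseteq$, let $j \in C(\cM,i,B) \setminus \{i\}$. The set $(B \setminus \{j\}) \cup \{i\}$ has the same cardinality as $B$, so it suffices to show it is independent, and then it is automatically a basis. Assume for contradiction that it contains a circuit $C'$. Since $B$ itself is independent and $C' \not\subseteq B$, we must have $i \in C'$. Moreover $j \notin C'$ (because $j \notin (B \setminus \{j\}) \cup \{i\}$), whereas $j \in C(\cM,i,B)$, so the two circuits $C(\cM,i,B)$ and $C'$ are distinct but both contain $i$. By the circuit elimination axiom applied at the element $i$, there is a circuit contained in $\bigl(C(\cM,i,B) \cup C'\bigr) \setminus \{i\} \subseteq B$, contradicting the independence of $B$.

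The argument is essentially routine; the only mild subtlety is keeping track of why the two circuits produced in the second direction are distinct, which is what lets us invoke circuit elimination nontrivially. Everything else reduces to cardinality counting and the defining property of the fundamental circuit. I expect no real obstacle here.
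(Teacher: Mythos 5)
Your proof is correct. The paper itself gives no proof of this lemma and only cites Oxley's exercise, so there is nothing to compare against; your argument via the circuit elimination axiom (using that $C(\cM,i,B)$ and the hypothetical circuit $C'$ are distinct circuits through $i$ whose union minus $i$ lies in $B$) is the standard one and fills the gap cleanly.
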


We now prove some elementary facts about matroids.

\begin{proposition}
\label{wmaximalextrahasminimalweight}
Let $w = (w_1, \dots, w_n) \in \R^n$, let $B \in \cB(\cM_w)$, and let $i \in \{1, \dots, n\} \setminus B$. Then
\[
	\min_{j \in C(\cM, i, B)} w_j = w_i.
\]
\end{proposition}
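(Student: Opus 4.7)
The plan is to combine the characterization of the fundamental circuit from \autoref{fundamentalcircuitbasisexchange} with the maximality defining $\cB(\cM_w)$. Since $i \in C(\cM, i, B)$, the inequality $\min_{j \in C(\cM, i, B)} w_j \le w_i$ is automatic, so the content is to show that $w_j \ge w_i$ for every $j \in C(\cM, i, B) \setminus \{i\}$.

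First, I would fix an arbitrary $j \in C(\cM, i, B) \setminus \{i\}$. By \autoref{fundamentalcircuitbasisexchange}, the set $B' := (B \setminus \{j\}) \cup \{i\}$ is a basis of $\cM$. Next, I would invoke the hypothesis $B \in \cB(\cM_w)$, which by definition means
\[
    \sum_{k \in B} w_k \;=\; \max_{B'' \in \cB(\cM)} \sum_{k \in B''} w_k \;\ge\; \sum_{k \in B'} w_k.
\]
Cancelling the common indices on both sides of the inequality leaves $w_j \ge w_i$, exactly as needed.

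Combining this with the trivial bound from $j = i$, the minimum is achieved (at least) at $j = i$, yielding $\min_{j \in C(\cM, i, B)} w_j = w_i$. I do not anticipate a main obstacle: the proof is a direct two-line consequence of \autoref{fundamentalcircuitbasisexchange} together with the optimality of $B$ among bases with respect to $w$.
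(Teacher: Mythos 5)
Your proof is correct and follows essentially the same route as the paper: apply \autoref{fundamentalcircuitbasisexchange} to obtain the basis $(B\setminus\{j\})\cup\{i\}$ and then use $w$-optimality of $B$ to conclude $w_j \geq w_i$. You spell out the cancellation and the trivial $\leq$ direction (from $i \in C(\cM,i,B)$) a bit more explicitly than the paper does, but the argument is the same.
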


\begin{proof}
Let $j \in C(\cM, i, B) \setminus \{i\}$. Then by \autoref*{fundamentalcircuitbasisexchange},
\[
	(B \setminus \{j\}) \cup \{i\} \in \cB(\cM).
\]
Because $B \in \cB(\cM_w)$, this implies that $w_i \leq w_j$.
\end{proof}

\begin{proposition}
\label{initialformfundamentalcircuit}
Let $w = (w_1, \dots, w_n) \in \R^n$, let $B \in \cB(\cM_w)$, and let $i \in \{1, \dots, n\} \setminus B$. Then
\[
	C(\cM_w, i, B) = \{ j \in C(\cM, i, B) \, | \, w_j = w_i\}.
\]
\end{proposition}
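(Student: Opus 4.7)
The plan is to reduce everything to the basis-exchange characterization of fundamental circuits (\autoref{fundamentalcircuitbasisexchange}) applied to both $\cM$ and $\cM_w$, and then translate the condition ``basis of $\cM_w$'' into a weight condition using that $B$ is itself $w$-maximal.

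Concretely, first I would handle $j = i$: since $B$ is independent in $\cM_w$ (as a basis), the unique circuit $C(\cM_w, i, B) \subset B \cup \{i\}$ must contain $i$, and trivially $w_i = w_i$ and $i \in C(\cM, i, B)$, so $i$ lies in both sets. For $j \in B$, \autoref{fundamentalcircuitbasisexchange} applied to $\cM_w$ gives
\[
    j \in C(\cM_w, i, B) \setminus \{i\} \iff (B \setminus \{j\}) \cup \{i\} \in \cB(\cM_w),
\]
and applied to $\cM$ gives the analogous statement with $\cM$ replacing $\cM_w$. Now $(B \setminus \{j\}) \cup \{i\} \in \cB(\cM_w)$ means it is a basis of $\cM$ that achieves the $w$-maximum among bases of $\cM$. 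The ``basis of $\cM$'' part is exactly $j \in C(\cM, i, B) \setminus \{i\}$. For the $w$-maximality part, compare weights: since $B \in \cB(\cM_w)$ realizes $\max_{B' \in \cB(\cM)} \sum_{k \in B'} w_k$, and $(B \setminus \{j\}) \cup \{i\}$ has weight equal to that of $B$ shifted by $w_i - w_j$, the maximality is equivalent to $w_j = w_i$. Combining, for $j \in B$ we get $j \in C(\cM_w, i, B) \setminus \{i\}$ iff $j \in C(\cM, i, B) \setminus \{i\}$ and $w_j = w_i$.

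I don't expect a serious obstacle here; the whole argument is a bookkeeping exercise between the two applications of \autoref{fundamentalcircuitbasisexchange} and the definition of $\cM_w$. The only mild subtlety is being careful that $C(\cM_w, i, B)$ actually exists and contains $i$, which follows from $B$ being a basis of $\cM_w$ and $\lvert B \cup \{i\}\rvert = d+1$. Note that \autoref{wmaximalextrahasminimalweight} is consistent with (and can be re-derived from) this statement, since the right-hand side being nonempty forces $w_i$ to be the minimum of $w$ on $C(\cM, i, B)$, but that inequality $w_j \geq w_i$ is not actually needed for the proof of equality of the two sets.
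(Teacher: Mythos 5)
Your proposal is correct and follows essentially the same route as the paper: both reduce to \autoref{fundamentalcircuitbasisexchange} applied to $\cM$ and $\cM_w$ and then unwind the definition of $\cB(\cM_w)$ to translate the basis-of-$\cM_w$ condition into the equation $w_j = w_i$. The only cosmetic difference is that you explicitly note $i$ belongs to both sides, while the paper leaves that case implicit after handling $j \in B$.
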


\begin{proof}
Let $j \in B$. By \autoref*{fundamentalcircuitbasisexchange},
\begin{align*}
	j \in C(\cM_w, i, B) &\iff (B \setminus \{j\}) \cup \{i\} \in \cB(\cM_w)\\
	&\iff \text{$(B \setminus \{j\}) \cup \{i\} \in \cB(\cM)$ and $w_j = w_i$}\\
	&\iff \text{$j \in C(\cM, i , B)$ and $w_j = w_i$,}
\end{align*}
and we are done.
\end{proof}

We will now study certain generators of the ideal defining a linear subspace.

\begin{proposition}
\label{initialformofcicuitformisinitialcircuitform}
Let $\cA \in \Gr_\cM(k)$, let $w \in \R^n$, let $B \in \cB(\cM_w)$, and let $i \in \{1, \dots, n\} \setminus B$. Then $\init_w L_{C(\cM, i, B)}^\cA$ is a nonzero scalar multiple of $L_{C(\cM_w, i, B)}^{\cA_w}$.
\end{proposition}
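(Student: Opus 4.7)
The plan is to chase through the definitions and apply the two preceding propositions of the section to pin down the support of $\init_w L_{C(\cM,i,B)}^\cA$, and then invoke the uniqueness of $L_{C(\cM_w,i,B)}^{\cA_w}$ up to scalar. By construction, $L_{C(\cM,i,B)}^\cA = \sum_{j \in C(\cM,i,B)} a_j x_j$ with every $a_j \neq 0$, and this linear form lies in the ideal defining $X_\cA$. Applying $\init_w$ will select exactly the monomials $a_j x_j$ whose index $j$ achieves the extremal weight inside $\supp L_{C(\cM,i,B)}^\cA = C(\cM,i,B)$.

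The first key step is to identify which indices survive in $\init_w L_{C(\cM,i,B)}^\cA$. By \autoref{wmaximalextrahasminimalweight}, the extremal weight of $w_j$ over $j \in C(\cM,i,B)$ is attained at $j = i$ and equals $w_i$, and by \autoref{initialformfundamentalcircuit}, the full set of $j \in C(\cM,i,B)$ with $w_j = w_i$ is precisely $C(\cM_w, i, B)$. Thus $\init_w L_{C(\cM,i,B)}^\cA = \sum_{j \in C(\cM_w, i, B)} a_j x_j$, a linear form whose support is exactly $C(\cM_w, i, B)$ and none of whose coefficients vanish. The second key step is to observe that $\init_w L_{C(\cM,i,B)}^\cA$ lies in the initial ideal of the ideal of $X_\cA$, hence in the ideal defining $X_{\cA_w}$ by the definition of $\cA_w$ recalled in \S\ref{preliminarieslinearsubspaces}. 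Combined with the support computation, this makes it a linear form in the ideal of $X_{\cA_w}$ with coefficient of $x_j$ nonzero if and only if $j \in C(\cM_w, i, B)$. Since $L_{C(\cM_w, i, B)}^{\cA_w}$ is the unique such form up to nonzero scalar, the conclusion follows.

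The main obstacle, such as it is, is purely bookkeeping: one must be careful that the extremal convention for $\init_w$ matches the max convention used to define $\cB(\cM_w)$, so that the set $\{j \in C(\cM,i,B) : w_j = w_i\}$ provided by \autoref{initialformfundamentalcircuit} is indeed the set of indices selected by $\init_w$. Once this sign/convention check is made, the rest of the proof is a direct assembly of the two preceding propositions with the definitional characterization of $\cA_w$ via its defining ideal.
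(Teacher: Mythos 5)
Your proof is correct and follows essentially the same route as the paper: identify the support of $\init_w L_{C(\cM,i,B)}^\cA$ as $C(\cM_w,i,B)$ via Propositions \ref{wmaximalextrahasminimalweight} and \ref{initialformfundamentalcircuit}, observe that this initial form lies in the ideal of $X_{\cA_w}$, and conclude by the stated uniqueness of $L_{C(\cM_w,i,B)}^{\cA_w}$ up to nonzero scalar. Your write-up is a bit more explicit than the paper's (which only notes the nonvanishing of the coefficient of $x_i$ and leaves the support-uniqueness step implicit), but the underlying argument and the two cited propositions are the same.
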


\begin{proof}
By \autoref*{wmaximalextrahasminimalweight} and \autoref*{initialformfundamentalcircuit}, the coefficient of $x_i$ in the initial form $\init_w L_{C(\cM, i, B)}^\cA$ is nonzero if and only if $i \in C(\cM_w, i, B)$. The proposition then follows from the fact that $\init_w L_{C(\cM, i, B)}^\cA$ is in the ideal defining $X_{\cA_w}$ in $\bA_k^n$.
\end{proof}

\begin{proposition}
\label{fundamentalcircuitsdefinebasisforlinearsubspace}
Let $\cA \in \Gr_\cM(k)$ and $B \in \cB(\cM)$. Then the ideal defining $X_\cA$ in $\bA_k^n$ is generated by
\[
	\{L_{C(\cM, i, B)}^\cA \, | \, i \in \{1, \dots, n\} \setminus B\} \subset k[x_1, \dots, x_n].
\]
\end{proposition}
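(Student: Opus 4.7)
The plan is to use a dimension count. Since $X_\cA$ is a $d$-dimensional linear subspace of $\bA_k^n$, the ideal defining it in $k[x_1,\dots,x_n]$ is generated by linear forms, and the space of linear forms in this ideal has dimension exactly $n-d$. The set $\{1,\dots,n\}\setminus B$ also has size $n-d$, so it suffices to show that the proposed generators $L_{C(\cM,i,B)}^\cA$, for $i \in \{1,\dots,n\}\setminus B$, are linearly independent elements of the ideal. They lie in the ideal by the definition of $L_C^\cA$ fixed in Section~\ref{preliminarieslinearsubspaces}, so the only content is their linear independence.

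For the independence, I would exploit the fact that the fundamental circuit $C(\cM,i,B)$ is contained in $B \cup \{i\}$, so $L_{C(\cM,i,B)}^\cA$ is a linear combination of $x_j$ for $j \in B\cup\{i\}$. In particular, among the variables indexed by $\{1,\dots,n\}\setminus B$, the only one that can appear with nonzero coefficient in $L_{C(\cM,i,B)}^\cA$ is $x_i$, and the defining property of $L_{C(\cM,i,B)}^\cA$ ensures that this coefficient is actually nonzero (since $i \in C(\cM,i,B)$).

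Consequently, when I arrange the $n-d$ candidate generators as rows and restrict the coefficient matrix to the $n-d$ columns indexed by $\{1,\dots,n\}\setminus B$, I obtain a diagonal matrix with nonzero diagonal entries. This matrix has full rank $n-d$, so the linear forms $\{L_{C(\cM,i,B)}^\cA : i \in \{1,\dots,n\}\setminus B\}$ are linearly independent over $k$.

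There is no real obstacle here; the argument is a short dimension count once one observes the support constraint coming from the containment $C(\cM,i,B) \subseteq B\cup\{i\}$. The only thing to be slightly careful about is invoking the characterization of $L_C^\cA$ correctly: namely that the coefficient of $x_j$ in $L_C^\cA$ is nonzero precisely when $j \in C$, which is what forces both the support bound and the nonvanishing of the $x_i$-coefficient.
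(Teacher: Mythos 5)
Your proof is correct and matches the paper's argument: both rely on the observation that for $i,j \in \{1,\dots,n\}\setminus B$, the coefficient of $x_i$ in $L_{C(\cM,j,B)}^\cA$ is nonzero exactly when $i=j$ (since $C(\cM,j,B)\subseteq B\cup\{j\}$), yielding $n-d$ linearly independent linear forms in the ideal. The paper phrases this slightly more compactly but the dimension count and the diagonal-submatrix idea are the same.
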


\begin{proof}
For any $i, j \in \{1, \dots, n\} \setminus B$, we have that $i \in C(\cM, j, B)$ if and only if $i = j$. Therefore $\{L_{C(\cM, i, B)}^\cA \, | \, i \in \{1, \dots, n\} \setminus B\}$ is a set of $n-d$ linearly independent linear forms in the ideal defining $X_\cA$ in $\bA_k^n$, so we are done.
\end{proof}

\begin{proposition}
\label{fundamentalcircuitsgivegrobnerbasis}
Let $\cA \in \Gr_\cM(k)$, let $w \in \R^n$, and let $B \in \cB(\cM_w)$. Then the ideal defining $X_\cA$ in $\bA_k^n$ is generated by
\[
	\{L_{C(\cM,i,B)}^\cA \, | \, i \in \{1, \dots, n\} \setminus B\} \subset k[x_1, \dots, x_n],
\]
and the ideal defining $\init_w U_\cA$ in $\bG_{m,k}^n$ is generated by
\[
	\{\init_w L_{C(\cM, i, B)}^\cA \, | \, i \in \{1, \dots, n\} \setminus B \} \subset k[x_1^{\pm 1}, \dots, x_n^{\pm 1}].
\]
\end{proposition}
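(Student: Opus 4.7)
The plan is to obtain both assertions as direct consequences of the three preceding propositions, with essentially no new work. The key observation that ties everything together is that $\cB(\cM_w) \subset \cB(\cM)$, since every basis of $\cM_w$ is by definition a basis of $\cM$ (it just has maximal $w$-weight). This is what lets us apply Proposition \ref{fundamentalcircuitsdefinebasisforlinearsubspace} with $B \in \cB(\cM_w)$ taken as a basis of $\cM$ itself.

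The first assertion is then immediate from Proposition \ref{fundamentalcircuitsdefinebasisforlinearsubspace}: since $B \in \cB(\cM_w) \subset \cB(\cM)$, the forms $\{L_{C(\cM,i,B)}^\cA \mid i \in \{1, \dots, n\} \setminus B\}$ generate the ideal of $X_\cA$ in $\bA_k^n$.

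For the second assertion, I would argue as follows. First, apply Proposition \ref{initialformofcicuitformisinitialcircuitform} to identify each $\init_w L_{C(\cM, i, B)}^\cA$ as a nonzero scalar multiple of $L_{C(\cM_w, i, B)}^{\cA_w}$, so the two sets generate the same ideal. Next, apply Proposition \ref{fundamentalcircuitsdefinebasisforlinearsubspace} in the matroid $\cM_w$ — valid because $B \in \cB(\cM_w)$ and $\cA_w \in \Gr_{\cM_w}(k)$ — to conclude that $\{L_{C(\cM_w, i, B)}^{\cA_w} \mid i \in \{1, \dots, n\} \setminus B\}$ generates the ideal defining $X_{\cA_w}$ in $\bA_k^n$. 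Finally, extend scalars to $k[x_1^{\pm 1}, \dots, x_n^{\pm 1}]$: the extended ideal defines $U_{\cA_w} = X_{\cA_w} \cap \bG_{m,k}^n$, which by the fact recalled in the preliminaries coincides with $\init_w U_\cA$.

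There is no real obstacle here; the proposition is essentially a bookkeeping assembly of Propositions \ref{fundamentalcircuitbasisexchange}, \ref{initialformfundamentalcircuit}, \ref{initialformofcicuitformisinitialcircuitform}, and \ref{fundamentalcircuitsdefinebasisforlinearsubspace}. The only subtle point to flag is that the generating set for $\init_w U_\cA$ is a priori an ideal in the Laurent polynomial ring, but since $L_{C(\cM_w, i, B)}^{\cA_w}$ are honest polynomials whose vanishing locus in $\bA_k^n$ is $X_{\cA_w}$, the extension to $\bG_{m,k}^n$ defines the open subscheme $U_{\cA_w}$, matching $\init_w U_\cA$ under the identification recorded in the preliminaries.
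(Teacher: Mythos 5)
Your proof is correct and follows essentially the same route as the paper: apply \autoref{fundamentalcircuitsdefinebasisforlinearsubspace} in both $\cM$ (for $X_\cA$) and $\cM_w$ (for $X_{\cA_w}$), use \autoref{initialformofcicuitformisinitialcircuitform} to replace the circuit forms of $\cA_w$ with initial forms of those of $\cA$, and finish via $\init_w U_\cA = U_{\cA_w}$. Your explicit remark that $\cB(\cM_w) \subset \cB(\cM)$ makes a point the paper leaves implicit, but the argument is the same.
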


\begin{proof}
By \autoref*{fundamentalcircuitsdefinebasisforlinearsubspace}, the ideal defining $X_\cA$ in $\bA_k^n$ is generated by
\[
	\{L_{C(\cM,i,B)}^\cA \, | \, i \in \{1, \dots, n\} \setminus B\} \subset k[x_1, \dots, x_n],
\]
and the ideal defining $X_{\cA_w}$ in $\bA_k^n$ is generated by
\[
	\{L_{C(\cM_w,i,B)}^{\cA_w} \, | \, i \in \{1, \dots, n\} \setminus B\} \subset k[x_1, \dots, x_n].
\]
By \autoref*{initialformofcicuitformisinitialcircuitform}, this implies that the ideal defining $X_{\cA_w}$ in $\bA_k^n$ is generated by
\[
	\{\init_w L_{C(\cM,i,B)}^\cA \, | \, i \in \{1, \dots, n\} \setminus B\} \subset k[x_1, \dots, x_n].
\]
Then we are done by the fact that $\init_w U_\cA = U_{\cA_w} = \bG_{m,k}^n \cap X_{\cA_w}$.
\end{proof}

\section{Initial degenerations of Milnor fibers}

Let $d,n \in \Z_{>0}$ and let $\cM$ be a rank $d$ matroid on $\{1, \dots, n\}$. In this section, we will compute the initial degenerations of the Milnor fiber of a hyperplane arrangement. We will begin by stating the main result of this section.

\begin{theorem}
\label{initialmilnorismilnorinitial}
If $\cA \in \Gr_\cM(k)$ and $w \in \Trop(\cM) \cap (1, \dots, 1)^\perp$, then
\[
	\init_w F_\cA = F_{\cA_w}.
\]
\end{theorem}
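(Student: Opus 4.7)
The plan is to realize both $\init_w F_\cA$ and $F_{\cA_w}$ as the common special fiber of a flat family over $\bA_k^1$. Since $w \in \Trop(\cM)$, the matroid $\cM_w$ is loop-free, so it admits a basis $B \in \cB(\cM_w)$; by \autoref*{fundamentalcircuitsdefinebasisforlinearsubspace}, the ideal of $F_\cA$ in $k[x_1^{\pm 1}, \dots, x_n^{\pm 1}]$ is generated by $\{L_{C(\cM, i, B)}^\cA : i \notin B\} \cup \{x_1 \cdots x_n - 1\}$. The key combinatorial input is that $w \in (1,\dots,1)^\perp$ forces $w_1 + \cdots + w_n = 0$, so the two monomials of $x_1 \cdots x_n - 1$ share $w$-weight $0$, whence $\init_w(x_1 \cdots x_n - 1) = x_1 \cdots x_n - 1$.

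Clearing denominators to assume $w \in \Z^n$, I form the $w$-homogenizations $\tilde L_{C(\cM, i, B)}^\cA \in k[x_1^{\pm 1}, \dots, x_n^{\pm 1}, t]$ (so $\tilde L|_{t=1} = L$ and $\tilde L|_{t=0} = \init_w L$), and let $\mathcal{F} \subset \bG_{m,k}^n \times_k \bA_k^1$ be the subscheme cut out by $\{\tilde L_{C(\cM, i, B)}^\cA\} \cup \{x_1 \cdots x_n - 1\}$. The subfamily $\mathcal{U}$ cut out by only the $\tilde L$'s is the Gröbner degeneration of $U_\cA$ and is flat over $\bA_k^1$: this is precisely what \autoref*{fundamentalcircuitsgivegrobnerbasis} supplies, since that proposition says $\{L_{C(\cM, i, B)}^\cA\}$ forms a $w$-Gröbner basis of the ideal of $U_\cA$. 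The family $\mathcal{U}$ is integral, and $x_1 \cdots x_n - 1$ does not vanish identically on it (otherwise $x_1 \cdots x_n$ would be constant on $X_\cA$, forcing $X_\cA$ into a coordinate hyperplane and contradicting the loop-freeness of $\cM$). Hence $x_1 \cdots x_n - 1$ is a non-zero-divisor on $\mathcal{U}$, and cutting by it preserves flatness, so $\mathcal{F} \to \bA_k^1$ is flat.

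Computing fibers of $\mathcal{F}$: at $t = 1$ I recover $F_\cA$ directly, while at $t = 0$, \autoref*{initialformofcicuitformisinitialcircuitform} identifies each $\init_w L_{C(\cM, i, B)}^\cA$ with a nonzero scalar multiple of $L_{C(\cM_w, i, B)}^{\cA_w}$, so the special fiber is cut out by $\{L_{C(\cM_w, i, B)}^{\cA_w}\} \cup \{x_1 \cdots x_n - 1\}$, which by another application of \autoref*{fundamentalcircuitsdefinebasisforlinearsubspace} is exactly $F_{\cA_w}$. On the other hand, the Gröbner degeneration of $F_\cA$, call it $\mathcal{F}'$, is also flat over $\bA_k^1$ with special fiber $\init_w F_\cA$ by definition, and $\mathcal{F}$ and $\mathcal{F}'$ agree over $\bG_m \subset \bA_k^1$. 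A flat family over $\bA_k^1$ is the scheme-theoretic closure of its restriction to $\bG_m$, so $\mathcal{F} = \mathcal{F}'$, and comparing special fibers yields $F_{\cA_w} = \init_w F_\cA$.

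The hard part is the flatness of $\mathcal{F}$, which combines two independent ingredients: the $w$-Gröbner basis property of \autoref*{fundamentalcircuitsgivegrobnerbasis} (ensuring $\mathcal{U}$ is the Gröbner degeneration of $U_\cA$) and the $w$-homogeneity of the Milnor equation $x_1 \cdots x_n - 1$, which uses precisely the hypothesis $w \in (1, \dots, 1)^\perp$ and allows this equation to be adjoined without disrupting flatness.
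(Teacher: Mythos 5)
Your approach, via a Gr\"{o}bner degeneration over $\bA_k^1$, is genuinely different from the paper's. The paper homogenizes the ideal $I_\cA$ of $F_\cA$ to $k[x_0,\dots,x_n]$, introduces the auxiliary degree-$n$ polynomial $g_B^\cA$ so that the generating set $\{L_{C(\cM,i,B)}^\cA\}\cup\{g_B^\cA - x_0^n\}$ has pairwise coprime leading terms under a suitable order $\prec_v$, applies Buchberger's criterion, and dehomogenizes. You avoid $g_B^\cA$ entirely by exploiting that $x_1\cdots x_n - 1$ is already $w$-homogeneous of weight $0$, so it can be adjoined to the Rees family $\mathcal{U}$ of $U_\cA$ and should cut out a flat degeneration of $F_\cA$ directly, which you then compare to $\mathcal{F}'$ via the closure characterization of flat families. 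This is a clean, more geometric route.

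There is, however, a gap in the flatness step. You argue that since $x_1\cdots x_n - 1$ is a non-zero-divisor on the integral scheme $\mathcal{U}$, ``cutting by it preserves flatness.'' That implication is false in general: with $\mathcal{U} = \Spec k[t,x,y]/(xy-t)$, flat over $\bA_k^1$, the element $x$ is a non-zero-divisor on the total space, yet $\mathcal{U}/(x) \cong \Spec k[y]$ with $t$ acting as $0$, which is not flat over $k[t]$. The correct criterion (the local criterion for flatness, or equivalently: no associated prime of $\mathcal{F}$ meets $\{t=0\}$) requires that $x_1\cdots x_n - 1$ be a non-zero-divisor on \emph{each fiber} of $\mathcal{U}\to\bA_k^1$, and in particular on the special fiber $\mathcal{U}_0 = \init_w U_\cA = U_{\cA_w}$. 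That does hold here, but it needs to be said: since $w\in\Trop(\cM)$, the matroid $\cM_w$ is loop-free, so $U_{\cA_w}$ is a nonempty integral scheme of positive dimension on which the degree-$n$ homogeneous form $x_1\cdots x_n$ is nonconstant; hence $x_1\cdots x_n - 1$ does not vanish identically on $U_{\cA_w}$. Note that this is precisely where the hypothesis $w\in\Trop(\cM)$ enters the flatness argument; your write-up only invokes it to produce $B\in\cB(\cM_w)$ (which exists for any $w$) and to identify the $t=0$ fiber, not to protect flatness. Finally, a minor point: before ``clearing denominators to assume $w\in\Z^n$'' you should replace an arbitrary $w\in\Trop(\cM)\cap(1,\dots,1)^\perp$ by a rational vector in the relative interior of the same Gr\"{o}bner cone intersected with $(1,\dots,1)^\perp$, which leaves $\init_w$ unchanged; as stated, the reduction to $\Z^n$ only handles rational $w$.
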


Before proving \autoref*{initialmilnorismilnorinitial}, we show that it implies the next two corollaries, which will eventually be used in proving the main results of this paper.

\begin{corollary}
\label{tropicalizationofmilnorfiber}
If $\cA \in \Gr_\cM(k)$, then
\[
	\Trop(F_\cA) = \Trop(\cM) \cap (1, \dots, 1)^\perp.
\]
\end{corollary}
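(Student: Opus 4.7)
The plan is to prove the two inclusions separately, using \autoref{initialmilnorismilnorinitial} for the nontrivial direction.

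First I would establish the easier containment $\Trop(F_\cA) \subseteq \Trop(\cM) \cap (1,\dots,1)^\perp$. Since $F_\cA$ is the closed subscheme of $\bG_{m,k}^n$ cut out by the ideal defining $U_\cA$ together with the single equation $x_1 \cdots x_n - 1 = 0$, the inclusion $F_\cA \hookrightarrow U_\cA$ gives $\Trop(F_\cA) \subseteq \Trop(U_\cA)$. The preliminaries identify $\Trop(U_\cA)$ with $\Trop(\cM)$, so $\Trop(F_\cA) \subseteq \Trop(\cM)$. For the hyperplane condition, note that $F_\cA$ is also contained in the hypersurface cut out by $x_1 \cdots x_n - 1$, whose tropicalization is exactly $\{w \in \R^n \mid w_1 + \cdots + w_n = 0\} = (1,\dots,1)^\perp$ (the binomial $x_1 \cdots x_n - 1$ has only two monomials, so its tropical hypersurface is the linear hyperplane where the weights of these two terms coincide). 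Intersecting, $\Trop(F_\cA) \subseteq \Trop(\cM) \cap (1,\dots,1)^\perp$.

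For the reverse containment, I would apply \autoref{initialmilnorismilnorinitial}: for $w \in \Trop(\cM) \cap (1,\dots,1)^\perp$ we have $\init_w F_\cA = F_{\cA_w}$. A point $w$ belongs to $\Trop(F_\cA)$ if and only if $\init_w F_\cA$ is nonempty, so it suffices to show $F_{\cA_w}(k) \neq \emptyset$. Since $w \in \Trop(\cM)$, the matroid $\cM_w$ is loop-free, hence $X_{\cA_w}$ is not contained in any coordinate hyperplane and $U_{\cA_w} = X_{\cA_w} \cap \bG_{m,k}^n$ is a nonempty open subset of a positive-dimensional linear subspace. Picking $q \in U_{\cA_w}(k)$ and setting $a = x_1(q) \cdots x_n(q) \in k^\times$, I would use that $k$ is algebraically closed to choose $t \in k^\times$ with $t^n = a^{-1}$. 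Because $X_{\cA_w}$ is a linear subspace, $tq \in X_{\cA_w}$, and by construction $x_1(tq) \cdots x_n(tq) = t^n a = 1$, so $tq \in F_{\cA_w}(k)$.

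There is no real obstacle here: the forward inclusion is just monotonicity of tropicalization plus the binomial computation, and the reverse inclusion reduces via \autoref{initialmilnorismilnorinitial} to a one-line scaling argument that works because $k$ is algebraically closed (no hypothesis on the characteristic is needed for mere nonemptiness, even though smoothness of $F_\cA$ required $\mathrm{char}(k) \nmid n$).
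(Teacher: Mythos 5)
Your proof is correct and follows essentially the same route as the paper: you handle the two containments by the same three observations — $\init_w(x_1\cdots x_n - 1)$ is a unit off $(1,\dots,1)^\perp$, $\Trop(F_\cA)\subseteq\Trop(U_\cA)=\Trop(\cM)$, and on $\Trop(\cM)\cap(1,\dots,1)^\perp$ you invoke \autoref{initialmilnorismilnorinitial} to reduce to nonemptiness of $F_{\cA_w}$. The only difference is that the paper simply asserts $F_{\cA_w}\neq\emptyset$ once it knows $X_{\cA_w}$ avoids the coordinate hyperplanes, whereas you spell out the scaling argument over an algebraically closed field; your remark that the characteristic hypothesis is not needed here is also accurate.
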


\begin{proof}
Suppose $w \in \R^n \setminus (1, \dots, 1)^\perp$. Then $\init_w (x_1 \cdots x_n - 1)$ is a unit that is in the ideal defining $\init_w F_\cA$ in $\bG_{m,k}^n$, so $\init_w F_\cA = \emptyset$.

Suppose $w \in \R^n \setminus \Trop(\cM)$. Then $\init_w F_\cA \subset \init_w U_\cA = \emptyset$.

Now suppose that $w \in \Trop(\cM) \cap (1, \dots, 1)^\perp$. We only need to show that $\init_w F_{\cA} \neq \emptyset$. By \autoref*{initialmilnorismilnorinitial}, we have that $\init_w F_{\cA} = F_{\cA_w}$. Because $w \in \Trop(\cM)$, we have that $\cM_w$ is loop-free, so $X_{\cA_w}$ is not contained in a coordinate hyperplane of $\bA_k^n$. Thus $F_{\cA_w} \neq \emptyset$.
\end{proof}

We note that if the characteristic of $k$ does not divide $n$, then \autoref*{initialmilnorismilnorinitial} and \autoref*{tropicalizationofmilnorfiber} imply that $F_\cA$ is sch\"{o}n for all $\cA \in \Gr_\cM(k)$.

\begin{corollary}
\label{grobnercompleteintersectionbasismilnorfiber}
Let $\cA \in \Gr_\cM(k)$, let $w \in \R^n$, and let $B \in \cB(\cM_w)$. Then the ideal defining $F_\cA$ in $\bG_{m,k}^n$ is generated by
\[
	\{L_{C(\cM, i, B)}^\cA \, | \, i \in \{1, \dots, n\} \setminus B\} \cup \{x_1 \dots x_n - 1\} \subset k[x_1^{\pm 1}, \dots, x_n^{\pm 1}],
\]
and the ideal defining $\init_w F_\cA$ in $\bG_{m,k}^n$ is generated by
\[
	\{\init_w L_{C(\cM, i, B)}^\cA \, | \, i \in \{1, \dots, n\} \setminus B\} \cup \{\init_w(x_1 \dots x_n - 1)\} \subset k[x_1^{\pm 1}, \dots, x_n^{\pm 1}].
\]
\end{corollary}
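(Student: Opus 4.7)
The first claim is immediate from \autoref{fundamentalcircuitsdefinebasisforlinearsubspace}: since $\cB(\cM_w) \subseteq \cB(\cM)$, the given $B$ is a basis of $\cM$ itself, so that proposition applied to $B$ identifies the ideal of $X_\cA$ in $k[x_1, \dots, x_n]$ with the one generated by $\{L_{C(\cM, i, B)}^\cA \, | \, i \notin B\}$; the result follows by extending scalars to $k[x_1^{\pm 1}, \dots, x_n^{\pm 1}]$ and adjoining $x_1 \cdots x_n - 1$.

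For the second claim I would argue by cases on $w$. If $w \notin (1, \dots, 1)^\perp$, then $\init_w(x_1 \cdots x_n - 1)$ is either $x_1 \cdots x_n$ or $-1$, in either case a unit in $k[x_1^{\pm 1}, \dots, x_n^{\pm 1}]$, so the proposed right-hand ideal is the unit ideal; meanwhile \autoref{tropicalizationofmilnorfiber} gives $\init_w F_\cA = \emptyset$. If $w \in (1, \dots, 1)^\perp$ but $w \notin \Trop(\cM)$, then $\cM_w$ has some loop $i$; since $B \in \cB(\cM_w)$ cannot contain a loop, a short circuit-elimination argument shows $C(\cM_w, i, B) = \{i\}$, so $L_{C(\cM_w, i, B)}^{\cA_w}$ is a nonzero scalar multiple of the unit $x_i$. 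By \autoref{initialformofcicuitformisinitialcircuitform}, $\init_w L_{C(\cM, i, B)}^\cA$ is also a nonzero scalar multiple of $x_i$, so the right-hand ideal is again the unit ideal; on the other hand $\init_w F_\cA \subseteq \init_w U_\cA = U_{\cA_w} = \emptyset$ since $X_{\cA_w}$ is contained in $V(x_i)$.

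The substantive case is $w \in \Trop(\cM) \cap (1, \dots, 1)^\perp$. Here $\init_w(x_1 \cdots x_n - 1) = x_1 \cdots x_n - 1$, and \autoref{initialmilnorismilnorinitial} gives $\init_w F_\cA = F_{\cA_w}$. Applying the first claim of the present proposition to $\cA_w \in \Gr_{\cM_w}(k)$, with ambient matroid $\cM_w$ and basis $B \in \cB(\cM_w)$, identifies the ideal of $F_{\cA_w}$ with the one generated by $\{L_{C(\cM_w, i, B)}^{\cA_w} \, | \, i \notin B\} \cup \{x_1 \cdots x_n - 1\}$. Combining this with \autoref{initialformofcicuitformisinitialcircuitform}, which expresses each $L_{C(\cM_w, i, B)}^{\cA_w}$ as a nonzero scalar multiple of $\init_w L_{C(\cM, i, B)}^\cA$, closes the argument.

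The main obstacle is the bookkeeping in the two degenerate cases: one has to identify, whenever $\init_w F_\cA$ is empty, the specific generator on the right that is a unit of $k[x_1^{\pm 1}, \dots, x_n^{\pm 1}]$. The loop analysis via fundamental circuits handles this, and the interesting case reduces cleanly via \autoref{initialmilnorismilnorinitial} and \autoref{initialformofcicuitformisinitialcircuitform}.
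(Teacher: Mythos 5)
Your proposal is correct and follows essentially the same three-case analysis as the paper: the case $w \notin (1,\dots,1)^\perp$, the case $w \notin \Trop(\cM)$, and the substantive case $w \in \Trop(\cM) \cap (1,\dots,1)^\perp$ via \autoref{initialmilnorismilnorinitial}. The paper simply cites \autoref{fundamentalcircuitsgivegrobnerbasis} where you unwind through \autoref{initialformofcicuitformisinitialcircuitform} and the first claim applied to $\cA_w$; these are logically the same, since \autoref{fundamentalcircuitsgivegrobnerbasis} is itself a repackaging of those ingredients.
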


\begin{proof}
By \autoref*{fundamentalcircuitsgivegrobnerbasis}, the ideal defining $F_\cA$ in $\bG_{m,k}^n$ is generated by
\[
	\{L_{C(\cM, i, B)}^\cA \, | \, i \in \{1, \dots, n\} \setminus B\} \cup \{x_1 \dots x_n - 1\} \subset k[x_1^{\pm 1}, \dots, x_n^{\pm 1}].
\]
Set $I^w$ to be the ideal in $k[x_1^{\pm 1}, \dots, x_n^{\pm 1}]$ generated by
\[
	\{\init_w L_{C(\cM, i, B)}^\cA \, | \, i \in \{1, \dots, n\} \setminus B\} \cup \{\init_w(x_1 \dots x_n - 1)\}.
\]
Suppose that $w \in \R^n \setminus (1, \dots, 1)^\perp$. Then $\init_w (x_1 \cdots x_n - 1)$ is a unit, so $\init_w F_\cA = \emptyset$ is defined by $I^w$.

Suppose that $w \in \R^n \setminus \Trop(\cM) = \R^n \setminus \Trop(U_\cA)$. Then \autoref*{fundamentalcircuitsgivegrobnerbasis} implies that $\{\init_w L_{C(\cM, i, B)}^\cA \, | \, i \in \{1, \dots, n\} \setminus B\}$ generates the unit ideal in $k[x_1^{\pm 1}, \dots, x_n^{\pm 1}]$, so $\init_w F_\cA = \emptyset$ is defined by $I^w$.

Finally, suppose that $w \in \Trop(\cM) \cap (1, \dots, 1)^\perp$. Then $\init_w (x_1 \cdots x_n - 1) = (x_1 \cdots x_n - 1)$, so \autoref*{fundamentalcircuitsgivegrobnerbasis} implies that $\init_w F_\cA = F_{\cA_w}$ is defined by $I^w$.
\end{proof}

\subsection{Gr\"{o}bner bases for Milnor fibers}

The remainder of this section will be dedicated to proving \autoref*{initialmilnorismilnorinitial}. If $\cM$ has a loop, then $\Trop(\cM) = \emptyset$, so we will assume for the remainder of this section that $\cM$ is loop-free.

For all $I \subset \{1, \dots, n\}$, let $x_I \in k[x_1, \dots, x_n]$ denote the monomial $\prod_{i \in I} x_i$. For any $\cA \in \Gr_{\cM}(k)$, let $I_\cA \subset k[x_1, \dots, x_n]$ denote the ideal defining $F_\cA$ in $\bA_k^n$. For each $\cA \in \Gr_{\cM}(k)$, each circuit $C$ in $\cM$, and each $i \in C$, let $L_{C,i}^\cA \in k[x_1, \dots, x_n]$ denote the nonzero scalar multiple of $L_C^\cA$ whose coefficient of $x_i$ is equal to $1$. For each $\cA \in \Gr_{\cM}(k)$ and each $B \in \cB(\cM)$, let $g_B^\cA \in k[x_1, \dots, x_n]$ denote
\[
	g_B^\cA = x_B \prod_{i \in \{1, \dots, n\} \setminus B} (x_i - L_{C(\cM, i, B), i}^\cA).
\]
By construction and the fact that $\cM$ is loop-free, each $g_B^\cA$ is a homogeneous polynomial of degree $n$ in the variables $\{x_i \, | \, i \in B\}$. 

\begin{lemma}
\label{generatorswithgforMilnorideal}
Let $\cA \in \Gr_{\cM}(k)$ and let $B \in \cB(\cM)$. Then $I_\cA$ is generated by
\[
	\{L_{C(\cM,i,B)}^\cA \, | \, i \in \{1, \dots, n\} \setminus B\} \cup \{g_B^\cA - 1\} \subset k[x_1, \dots, x_n].
\]
\end{lemma}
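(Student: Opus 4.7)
The plan is to leverage \autoref{fundamentalcircuitsdefinebasisforlinearsubspace} to generate the ideal of $X_\cA$ by fundamental-circuit forms, and then observe that, modulo these forms, the monomial $x_1\cdots x_n$ reduces to $g_B^\cA$; this lets me swap $x_1\cdots x_n - 1$ for $g_B^\cA - 1$ without changing the ideal.

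More precisely, let $J \subset k[x_1,\dots,x_n]$ be the ideal generated by $\{L_{C(\cM,i,B)}^\cA : i \in \{1,\dots,n\}\setminus B\}$. By \autoref{fundamentalcircuitsdefinebasisforlinearsubspace}, $J$ is the ideal defining $X_\cA$ in $\bA_k^n$, hence
\[
    I_\cA = J + (x_1\cdots x_n - 1).
\]
So it suffices to show $x_1\cdots x_n \equiv g_B^\cA \pmod{J}$.

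For each $i \in \{1,\dots,n\}\setminus B$, the circuit $C(\cM,i,B)$ is contained in $B\cup\{i\}$, so I can write
\[
    L_{C(\cM,i,B),i}^\cA = x_i + h_i
\]
where $h_i \in k[x_j : j \in B]$ is a linear form supported on $C(\cM,i,B)\setminus\{i\} \subset B$. Since $L_{C(\cM,i,B),i}^\cA$ is a nonzero scalar multiple of $L_{C(\cM,i,B)}^\cA$, it lies in $J$, so $x_i \equiv -h_i \pmod{J}$ for each $i \notin B$. Note also that $x_i - L_{C(\cM,i,B),i}^\cA = -h_i$, so by the definition of $g_B^\cA$,
\[
    g_B^\cA = x_B \prod_{i \in \{1,\dots,n\}\setminus B}(-h_i).
\]
Multiplying the congruences $x_i \equiv -h_i$ and then multiplying by $x_B$ yields
\[
    x_1 \cdots x_n = x_B \prod_{i \in \{1,\dots,n\}\setminus B} x_i \equiv x_B \prod_{i \in \{1,\dots,n\}\setminus B}(-h_i) = g_B^\cA \pmod{J}.
\]
Therefore $x_1\cdots x_n - 1 \equiv g_B^\cA - 1 \pmod{J}$, so $J + (x_1\cdots x_n - 1) = J + (g_B^\cA - 1)$, which proves the lemma.

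There is no real obstacle here; the only thing to verify carefully is that $h_i$ involves only $B$-variables (which is immediate from the containment $C(\cM,i,B) \subset B\cup\{i\}$) so that the multiplicative replacement $x_i \mapsto -h_i$ inside the product $\prod_{i\notin B} x_i$ is legitimate modulo $J$ and reproduces exactly the factor appearing in $g_B^\cA$.
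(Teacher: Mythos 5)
Your proof is correct and takes essentially the same route as the paper's: both exploit the factorization of $g_B^\cA$ and \autoref{fundamentalcircuitsdefinebasisforlinearsubspace} to conclude that $g_B^\cA - x_1\cdots x_n$ lies in the ideal of $X_\cA$. The paper reaches this by explicitly expanding the product and isolating the $I=\emptyset$ term, whereas you argue via the congruences $x_i \equiv -h_i \pmod{J}$; this is a cleaner phrasing of the same computation.
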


\begin{proof}
We can rewrite $g_B^\cA$ as
\[
	g_B^\cA = \sum_{I \subset \{1, \dots, n\} \setminus B} (-1)^{\# I} x_{\{1, \dots, n\} \setminus I} \prod_{i \in I} L_{C(\cM, i, B), i}^\cA.
\]
Therefore
\[
	g_B^\cA -1 = (x_1 \cdots x_n - 1) + \sum_{\substack{ I \subset \{1, \dots, n\} \setminus B\\ I \neq \emptyset}}(-1)^{\# I} x_{\{1, \dots, n\} \setminus I} \prod_{i \in I} L_{C(\cM, i, B), i}^\cA.
\]
The lemma thus follows from \autoref*{fundamentalcircuitsdefinebasisforlinearsubspace}.
\end{proof}

For each $\cA \in \Gr_\cM(k)$, let $J_\cA \subset k[x_0, x_1, \dots, x_n]$ denote the homogenization of $I_\cA$. We will compute Gr\"{o}bner bases for each $J_\cA$ at certain monomial orders. 

For each $u \in \Z^{\{0, 1, \dots, n\}}$, let $x^u \in k[x_0, \dots, x_n]$ denote the monomial $(x_0, \dots, x_n)^u$. We will always assume a monomial order $\prec$ satisfies $x^u \preceq 1$, and for all $f \in k[x_0, \dots, x_n]$, we will let the initial term $\init_\prec f$ be the term of $f$ that is $\prec$-minimal.

\begin{remark}
It is more standard in Gr\"{o}bner theory to use the opposite convention, i.e., that 1 is the minimal monomial and that $\init_\prec f$ is the maximal term, but we have chosen our convention to be consistent with our convention for initial forms.
\end{remark}

If $\prec$ is a monomial order on $k[x_0, \dots, x_n]$ and $v \in \R_{\leq 0}^{\{0, \dots, n\}}$, let $\prec_v$ denote the monomial order defined by
\[
	x^{u_1} \prec_v x^{u_2} \quad \iff \quad \text{$u_1 \cdot v < u_2 \cdot v$ or $u_1 \cdot v = u_2 \cdot v$ and $x^{u_1} \prec x^{u_2}$}.
\]
Note that because $v$ has nonpositive entries, $\prec_v$ satisfies $x^u \preceq_v 1$ for all $u$.

\begin{proposition}
\label{grobnertiebreakerhomogeneous}
Let $\cA \in \Gr_\cM(k)$, let $w \in \R^n$, let $B \in \cB(\cM_w)$, and let $\lambda \in \R$ be such that $v = (0,w) + \lambda(1, \dots, 1) \in \R_{\leq 0}^{\{0, \dots, n\}}$. Let $\prec$ be a monomial order on $k[x_0, \dots, x_n]$ such that $x_i \prec x_j$ for all $i \in \{1, \dots, n\} \setminus B$ and $j \in B$. Then
\[
	\{L_{C(\cM,i,B)}^\cA \, | \, i \in \{1, \dots, n\} \setminus B\} \cup \{g_B^\cA - x_0^n\} \in k[x_0, \dots, x_n]
\]
is a Gr\"{o}bner basis for $J_\cA$ with respect to $\prec_v$.
\end{proposition}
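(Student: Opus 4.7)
The plan is: (i) verify $G = \{L_{C(\cM,i,B)}^\cA : i \notin B\} \cup \{g_B^\cA - x_0^n\}$ generates $J_\cA$; (ii) compute the initial forms of the elements of $G$; (iii) observe that these initial forms constitute a regular sequence in $k[x_0,\dots,x_n]$; (iv) finish with a Hilbert-series comparison to deduce $\langle \init_{\prec_v} G\rangle = \init_{\prec_v} J_\cA$.

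For (i): by \autoref*{generatorswithgforMilnorideal}, $\{L_{C(\cM,i,B)}^\cA\} \cup \{g_B^\cA - 1\}$ generates $I_\cA$, and since each $L_{C(\cM,i,B)}^\cA$ is already linear and $g_B^\cA$ is homogeneous of degree $n$, their homogenizations are exactly the elements of $G$. The $x_0$-saturatedness of $\langle G\rangle$ (giving $\langle G\rangle = J_\cA$) follows because $g_B^\cA \in k[x_j : j \in B]$ is coprime to $x_0$ in $k[x_0, \{x_j : j \in B\}]$, so $x_0$ is a non-zero-divisor modulo $\langle G\rangle$.

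For (ii): when $i \notin B$, $L_{C(\cM,i,B)}^\cA$ is supported in $\{i\} \cup (C(\cM,i,B) \cap B)$, and \autoref*{wmaximalextrahasminimalweight} gives $v_i = w_i + \lambda \leq w_j + \lambda = v_j$ for each $j \in C(\cM,i,B) \cap B$; when this is an equality the tiebreaker hypothesis $x_i \prec x_j$ (using $i \notin B$, $j \in B$) still selects $x_i$, so $\init_{\prec_v} L_{C(\cM,i,B)}^\cA$ is a nonzero scalar multiple of $x_i$. For the remaining generator, each factor $x_i - L_{C(\cM,i,B),i}^\cA$ in $g_B^\cA = x_B \prod_{i \notin B}(x_i - L_{C(\cM,i,B),i}^\cA)$ is a linear form supported entirely in $B$, so $g_B^\cA \in k[x_j : j \in B]$; consequently $\init_{\prec_v}(g_B^\cA - x_0^n)$ is a nonzero scalar multiple of some monomial $m \in k[x_0, \{x_j : j \in B\}]$ of total degree $n$.

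For (iii) and (iv): the initial forms $\{x_i : i \notin B\} \cup \{m\}$ form a regular sequence in $k[x_0,\dots,x_n]$, since the $n - d$ distinct variables cut out the polynomial domain $k[x_0, \{x_j : j \in B\}]$, in which the nonzero monomial $m$ is a non-zero-divisor. Hence $\langle \init_{\prec_v} G \rangle$ is a homogeneous complete intersection of codimension $n - d + 1$ with generator degrees $(1,\dots,1,n)$ and standard-graded Hilbert series $(1-t^n)/(1-t)^{d+1}$. By upper semicontinuity of codimension, $G$ itself is then a regular sequence, so $\langle G\rangle = J_\cA$ is also a complete intersection with the same generator degrees and the same Hilbert series. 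Because passing to initial ideals preserves the standard-graded Hilbert series, the inclusion $\langle \init_{\prec_v} G\rangle \subseteq \init_{\prec_v} J_\cA$ of homogeneous ideals with equal Hilbert series must be an equality, and $G$ is a Gr\"{o}bner basis. The most delicate ingredient, on which everything else rests, is the exact identification $\init_{\prec_v} L_{C(\cM,i,B)}^\cA \propto x_i$, controlled by \autoref*{wmaximalextrahasminimalweight} together with the tiebreaker condition arranged for $\prec$.
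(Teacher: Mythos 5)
Your proof is correct, and it reaches the conclusion by a genuinely different route than the paper. Both proofs establish the same two ingredients at the start: that the displayed set generates $J_\cA$, and that $\init_{\prec_v} L_{C(\cM,i,B)}^\cA$ is a scalar multiple of $x_i$ (via \autoref{wmaximalextrahasminimalweight} plus the tiebreaker), while $\init_{\prec_v}(g_B^\cA - x_0^n)$ is a monomial in $k[x_0,\{x_j: j\in B\}]$. Where the proofs diverge is the final step. The paper observes that these $n-d+1$ initial terms are \emph{pairwise coprime} (the $x_i$ are distinct variables, and the last one involves none of them) and closes immediately by the coprimality refinement of Buchberger's criterion. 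You instead observe the initial terms form a \emph{regular sequence}, deduce that $G$ is a regular sequence with the same degrees via Gr\"obner degeneration preserving the Hilbert series, and conclude $\langle\init_{\prec_v}G\rangle = \init_{\prec_v}J_\cA$ by comparing Hilbert series in the inclusion. The paper's route is shorter and sidesteps any commutative-algebraic machinery; yours requires more to carry through but packages a useful bonus, namely that $J_\cA$ is a codimension-$(n-d+1)$ complete intersection. One other difference worth noting: you explicitly verify $\langle G\rangle = J_\cA$ by showing $x_0$ is a nonzerodivisor modulo $\langle G\rangle$ (so the ideal of homogenized generators is already $x_0$-saturated), whereas the paper treats this as an immediate consequence of \autoref{generatorswithgforMilnorideal} without comment; your version of that step is more self-contained.
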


\begin{proof}
By \autoref*{generatorswithgforMilnorideal}, $\{L_{C(\cM,i,B)}^\cA \, | \, i \in \{1, \dots, n\} \setminus B\} \cup \{g_B^\cA - x_0^n\}$ generates $J_\cA$. 

Let $i \in \{1, \dots, n\} \setminus B$. We will show that $\init_{\prec_v} L_{C(\cM, i, B)}^\cA$ is a nonzero scalar multiple of $x_i$. By \autoref*{wmaximalextrahasminimalweight} and \autoref*{initialformfundamentalcircuit}, the coefficient of $x_j$ in the initial form $\init_w L_{C(\cM, i, B)}^\cA$ is nonzero if and only if $j \in C(\cM_w, i, B)$. Because $L_{C(\cM, i, B)}^\cA$ is homogeneous, this implies that the coefficient of $x_j$ in the initial form $\init_v L_{C(\cM, i, B)}^\cA$ is nonzero if and only if $j \in C(\cM_w, i, B)$. Because $C(\cM_w, i, B) \subset B \cup \{i\}$, this implies that $\init_{\prec_v} L_{C(\cM, i, B)}^\cA$ is a nonzero scalar multiple of $x_i$.

Therefore, because $g_B^\cA - x_0^n$ is a polynomial in $\{x_0\} \cup \{x_j \, | \, j \in B\}$, the initial terms $\{\init_{\prec_v}L_{C(\cM,i,B)}^\cA \, | \, i \in \{1, \dots, n\} \setminus B\} \cup \{\init_{\prec_v}(g_B^\cA - x_0^n)\}$ are pairwise relatively prime. The proposition then follows from Buchberger's criteria.
\end{proof}

We can now apply this result to better understand each $I_\cA$.

\begin{proposition}
\label{generatorsforMilnorfiberinaffinespace}
Let $\cA \in \Gr_\cM(k)$, let $w \in \R^n$, and let $B \in \cB(\cM_w)$. Then
\[
	\{\init_w L_{C(\cM,i,B)}^\cA \, | \, i \in \{1, \dots, n\} \setminus B\} \cup \{\init_w(g_B^\cA - 1)\}
\]
generates the ideal $(\init_w f \, | \, f \in I_\cA) \subset k[x_1, \dots, x_n]$.
\end{proposition}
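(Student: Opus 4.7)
The plan is to invoke \autoref*{grobnertiebreakerhomogeneous} on the homogenization $J_\cA$ and then dehomogenize by setting $x_0 = 1$. Fix $\lambda \leq 0$ with $w_j + \lambda \leq 0$ for all $j$, set $v := (0, w) + \lambda(1, \dots, 1) \in \R_{\leq 0}^{\{0, \dots, n\}}$, and fix any monomial order $\prec$ on $k[x_0, \dots, x_n]$ with $x_i \prec x_j$ whenever $i \in \{1, \dots, n\} \setminus B$ and $j \in B$. By \autoref*{grobnertiebreakerhomogeneous}, the set
\[
G := \{L^\cA_{C(\cM,i,B)} \mid i \in \{1, \dots, n\} \setminus B\} \cup \{g_B^\cA - x_0^n\}
\]
is a Gr\"obner basis of $J_\cA$ with respect to $\prec_v$, so the standard fact that the $v$-initial forms of a $\prec_v$-Gr\"obner basis of a homogeneous ideal generate its $v$-initial ideal yields $\init_v J_\cA = (\init_v h \mid h \in G)$.

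Next, I would compute each $v$-initial form and relate it to the corresponding $w$-initial form under the dehomogenization $x_0 \mapsto 1$. Each $L^\cA_{C(\cM,i,B)}$ involves only the variables $x_j$ of $v$-weight $w_j + \lambda$, so $\init_v L^\cA_{C(\cM,i,B)} = \init_w L^\cA_{C(\cM,i,B)}$ and this is unchanged by dehomogenization. The element $g_B^\cA$ is homogeneous of degree $n$ in $x_1, \dots, x_n$, so each of its monomials has $v$-weight equal to its $w$-weight plus $n\lambda$, matching the $v$-weight $n\lambda$ of $x_0^n$. A case analysis on the sign of $W := \min\{w \cdot u \mid c_u x^u \text{ is a monomial of } g_B^\cA\}$, splitting into $W < 0$, $W = 0$, and $W > 0$, then shows uniformly that $(\init_v(g_B^\cA - x_0^n))|_{x_0 = 1} = \init_w(g_B^\cA - 1)$.

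It remains to identify $(\init_v J_\cA)|_{x_0 = 1}$ with $(\init_w f \mid f \in I_\cA)$. The containment $\supseteq$ is immediate: if $f \in I_\cA$ then its homogenization $\tilde f$ lies in $J_\cA$, and the same degree-shift argument gives $(\init_v \tilde f)|_{x_0 = 1} = \init_w f$. For $\subseteq$, the Gr\"obner basis conclusion reduces the problem to showing that each $(\init_v h)|_{x_0 = 1}$ for $h \in G$ lies in the target ideal, and this follows from the explicit formulas of the previous paragraph together with \autoref*{generatorswithgforMilnorideal}, which places $L^\cA_{C(\cM,i,B)}$ and $g_B^\cA - 1$ in $I_\cA$. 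The main obstacle I anticipate is the three-way case analysis producing $(\init_v(g_B^\cA - x_0^n))|_{x_0 = 1} = \init_w(g_B^\cA - 1)$, since the constant $-1$ may dominate, tie with, or be dominated by the minimum-$w$-weight monomials of $g_B^\cA$, and each situation must be checked to give the same dehomogenized answer.
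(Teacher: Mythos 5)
Your proof is correct and follows essentially the same route as the paper's: invoke \autoref*{grobnertiebreakerhomogeneous} to get a $\prec_v$-Gr\"obner basis of $J_\cA$, pass to $v$-initial forms, and dehomogenize by setting $x_0=1$. The one place you made extra work for yourself is the three-way case analysis on the sign of $W$: the identity $(\init_{v}(g_B^\cA - x_0^n))|_{x_0=1} = \init_w(g_B^\cA - 1)$ is an instance of the general fact that for any $h \in k[x_1,\dots,x_n]$ with homogenization $\tilde h \in k[x_0,\dots,x_n]$ one has $(\init_{(0,w)}\tilde h)|_{x_0=1} = \init_w h$, since the $(0,w)$-weight of each homogenized monomial $x_0^{D-|u|}x^u$ equals $w\cdot u$; the paper uses exactly this uniform observation (for the arbitrary $f \in I_\cA$ as well as for $g_B^\cA - 1$), which makes the case split unnecessary.
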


\begin{proof}
Let $\lambda \in \R$ be such that $v = (0,w) + \lambda(1,\dots, 1) \in \R_{\leq 0}^{\{0, \dots, n\}}$, and let $J_{\cA}^w \subset k[x_0, \dots, x_n]$ be the ideal
\[
	(\init_{(0,w)} f \, | \, f \in J_\cA) =  (\init_{v} f \, | \, f \in J_\cA).
\]
We will now use a standard argument to show that \autoref*{grobnertiebreakerhomogeneous} implies that $J_\cA^w$ is generated by
\[
	\{\init_{(0,w)} L_{C(\cM,i,B)}^\cA \, | \, i \in \{1, \dots, n\} \setminus B\} \cup \{\init_{(0,w)}(g_B^\cA - x_0^n)\}.
\]
Let $\prec$ be a monomial order on $k[x_0, \dots, x_n]$ such that $x_i \prec x_j$ for all $i \in \{1, \dots, n\} \setminus B$ and $j \in B$.
Let $\init_{\prec_v} J_\cA = \init_{\prec}J_\cA^w \subset k[x_0, \dots, x_n]$ be the ideal
\[
	(\init_{\prec_v} f \, | \, f \in J_\cA) = (\init_{\prec} f \, | \, f \in J_{\cA}^w).
\]
Then by \autoref*{grobnertiebreakerhomogeneous}, the initial ideal $\init_{\prec}J_\cA^w = \init_{\prec_v}J_\cA$ is generated by
\[
	\{\init_{\prec}(\init_{(0,w)} L_{C(\cM,i,B)}^\cA) \, | \, i \in \{1, \dots, n\} \setminus B\} \cup \{\init_{\prec}(\init_{(0,w)}(g_B^\cA - x_0^n))\}.
\]
This implies that $J_\cA^w$ is generated by
\[
	\{\init_{(0,w)} L_{C(\cM,i,B)}^\cA \, | \, i \in \{1, \dots, n\} \setminus B\} \cup \{\init_{(0,w)}(g_B^\cA - x_0^n)\}.
\]
Now let $f \in I_\cA$, and let $g \in J_\cA$ be the homogenization of $f$. Then $\init_{(0,w)} g \in J_\cA^w$ is in the ideal of $k[x_0, \dots, x_n]$ generated by
\[
	\{\init_{(0,w)} L_{C(\cM,i,B)}^\cA \, | \, i \in \{1, \dots, n\} \setminus B\} \cup \{\init_{(0,w)}(g_B^\cA - x_0^n)\},
\]
so $\init_w f = (\init_{(0,w)} g)|_{x_0 =1}$ is in the ideal of $k[x_1, \dots, x_n]$ generated by
\[
	\{\init_w L_{C(\cM,i,B)}^\cA \, | \, i \in \{1, \dots, n\} \setminus B\} \cup \{\init_w(g_B^\cA - 1)\},
\]
completing our proof.
\end{proof}

In the next two lemmas, we compute certain initial forms of each $(g_B^\cA-1)$.

\begin{lemma}
\label{initialformxicancelnormalizedcircuitforms}
Let $\cA \in \Gr_{\cM}(k)$, let $w = (w_1, \dots, w_n) \in \Trop(\cM)$, let $B \in \cB(\cM_w)$, and let $i \in \{1, \dots, n\} \setminus B$. Then
\[
	\init_w(x_i - L_{C(\cM,i,B), i}^\cA) = x_i - L_{C(\cM_w, i, B), i}^{\cA_w},
\]
and for all $u \in \supp( x_i - L_{C(\cM_w, i, B), i}^{\cA_w})$,
\[
	u \cdot w = w_i.
\]
\end{lemma}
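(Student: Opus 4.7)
The plan is to reduce to \autoref{initialformofcicuitformisinitialcircuitform} after accounting for the normalization of $L^\cA_{C,i}$ and the cancellation of the $x_i$ term in the difference.

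First I would write $L^\cA_{C(\cM,i,B),i} = x_i + L'$, where $L'$ is a linear form whose support consists of the monomials $x_j$ for $j \in C(\cM,i,B) \setminus \{i\}$, so that $x_i - L^\cA_{C(\cM,i,B),i} = -L'$. By \autoref{wmaximalextrahasminimalweight}, the minimum of the weights $w_j$ over $j \in C(\cM,i,B)$ is $w_i$, and by \autoref{initialformfundamentalcircuit}, the set of indices in $C(\cM,i,B)$ achieving this minimum is exactly $C(\cM_w,i,B)$. Since $w \in \Trop(\cM)$, the matroid $\cM_w$ is loop-free, so the circuit $C(\cM_w,i,B)$ has cardinality at least two, and therefore $C(\cM_w,i,B) \setminus \{i\}$ is nonempty. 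Hence the minimum weight over the support of $L'$ is also $w_i$, attained exactly on $C(\cM_w,i,B) \setminus \{i\}$.

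Because the $x_i$ term and the minimum-weight terms of $L'$ all carry weight $w_i$, we have $\init_w L^\cA_{C(\cM,i,B),i} = x_i + \init_w L'$. Applying \autoref{initialformofcicuitformisinitialcircuitform} together with the observation that the normalization (coefficient of $x_i$ equal to $1$) is preserved under $\init_w$, I would conclude that $\init_w L^\cA_{C(\cM,i,B),i} = L^{\cA_w}_{C(\cM_w,i,B),i}$. Subtracting $x_i$ from both sides yields $\init_w L' = L^{\cA_w}_{C(\cM_w,i,B),i} - x_i$, and negating gives the first equation
\[
    \init_w(x_i - L^\cA_{C(\cM,i,B),i}) = -\init_w L' = x_i - L^{\cA_w}_{C(\cM_w,i,B),i}.
\]

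For the second assertion, the support of $x_i - L^{\cA_w}_{C(\cM_w,i,B),i}$ consists of the exponent vectors $e_j$ for $j \in C(\cM_w,i,B) \setminus \{i\}$, and for each such $j$, \autoref{initialformfundamentalcircuit} yields $w_j = w_i$, i.e., $e_j \cdot w = w_i$. The only real subtlety is the use of the loop-free hypothesis to ensure that the cancellation of the $x_i$ term does not raise the minimum weight of the remaining support; once this is in place, the rest is direct bookkeeping with results already established.
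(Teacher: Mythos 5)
Your proof is correct and follows essentially the same route as the paper: both use \autoref{wmaximalextrahasminimalweight} and \autoref{initialformfundamentalcircuit} to identify the minimum weight and the loop-free hypothesis on $\cM_w$ to ensure the non-$x_i$ part of the circuit form is nonempty, so that cancelling $x_i$ does not change the minimal weight of what remains. The only cosmetic difference is that you invoke \autoref{initialformofcicuitformisinitialcircuitform} together with the normalization observation, whereas the paper derives $\init_w L_{C(\cM,i,B),i}^\cA = L_{C(\cM_w,i,B),i}^{\cA_w}$ directly from Propositions~\ref{wmaximalextrahasminimalweight} and~\ref{initialformfundamentalcircuit}; these amount to the same thing.
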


\begin{proof}
By \autoref*{wmaximalextrahasminimalweight} and \autoref*{initialformfundamentalcircuit},
\[
	\init_w L_{C(\cM,i,B), i}^\cA = L_{C(\cM_w, i, B), i}^{\cA_w}.
\]
Because $\cM_w$ is loop-free, $L_{C(\cM_w, i, B), i}^{\cA_w}$ is not a monomial, so
\[
	\init_w(x_i - L_{C(\cM,i,B), i}^\cA) = x_i - \init_w L_{C(\cM,i,B), i}^\cA = x_i - L_{C(\cM_w, i, B), i}^{\cA_w},
\]
and for all $u \in \supp( x_i - L_{C(\cM_w, i, B), i}^{\cA_w})$,
\[
	u \cdot w = w_i.
\]
\end{proof}

\begin{lemma}
\label{initialdegenerationofextragpolynomial}
Let $\cA \in \Gr_\cM(k)$, let $w \in \Trop(\cM) \cap (1, \dots, 1)^\perp$, and let $B \in \cB(\cM_w)$. Then
\[
	\init_w (g_B^{\cA} -1) = g_B^{\cA_w}-1.
\]
\end{lemma}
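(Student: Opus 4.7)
The plan is to treat $g_B^\cA - 1$ factor by factor, writing $g_B^\cA = x_B \prod_{i \in \{1, \dots, n\} \setminus B}(x_i - L_{C(\cM, i, B), i}^\cA)$ and exploiting the preceding lemma. By \autoref*{initialformxicancelnormalizedcircuitforms}, for every $i \in \{1, \dots, n\} \setminus B$ the initial form of the factor $x_i - L_{C(\cM, i, B), i}^\cA$ equals $x_i - L_{C(\cM_w, i, B), i}^{\cA_w}$, and every monomial of this initial form shares the common $w$-weight $w_i$. The monomial $x_B$ itself is a single term of $w$-weight $\sum_{j \in B} w_j$.

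Next I would invoke the general principle that, when each factor of a product has an initial form that is $w$-homogeneous (meaning all of its monomials carry a common $w$-weight), the initial form of the product equals the product of the initial forms, provided that latter product is nonzero. Applying this to the factorization of $g_B^\cA$ yields
\[
\init_w(g_B^\cA) = x_B \prod_{i \in \{1, \dots, n\} \setminus B}\bigl(x_i - L_{C(\cM_w, i, B), i}^{\cA_w}\bigr) = g_B^{\cA_w},
\]
of total $w$-weight $\sum_{j \in B} w_j + \sum_{i \notin B} w_i = \sum_{i=1}^n w_i = 0$, where the last equality uses the hypothesis $w \in (1, \dots, 1)^\perp$. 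The product is nonzero because $\cM_w$ is loop-free (which is ensured by $w \in \Trop(\cM)$), so each linear factor is nonzero; concretely, the monomial $x_1 \cdots x_n$ appears in the expansion with coefficient $1$.

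To finish, since $\init_w(g_B^\cA) = g_B^{\cA_w}$ is $w$-homogeneous of weight $0$ and the constant $-1$ also has $w$-weight $0$, the two pieces combine without interference, giving $\init_w(g_B^\cA - 1) = g_B^{\cA_w} - 1$.

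The only genuinely content-bearing step is the passage from factorwise initial forms to the initial form of the product, and the mild subtlety there is non-vanishing of $g_B^{\cA_w}$. Because each factor's initial form is $w$-homogeneous of a single known weight, there is no room for cancellation of weight, and the non-vanishing is immediate from loop-freeness of $\cM_w$. I therefore expect no serious obstacle; the argument is essentially a weight bookkeeping built on top of \autoref*{initialformxicancelnormalizedcircuitforms}.
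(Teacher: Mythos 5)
Your overall strategy is the same as the paper's: compute $\init_w g_B^\cA$ factor by factor via \autoref{initialformxicancelnormalizedcircuitforms}, observe that all monomials of $\init_w g_B^\cA$ have $w$-weight $\sum_{j\in B} w_j + \sum_{i\notin B} w_i = 0$, and then argue that subtracting the weight-zero constant $-1$ commutes with taking initial forms. The factor-by-factor step and the weight bookkeeping are fine.

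There is, however, a concrete error in your supporting claim that ``the monomial $x_1\cdots x_n$ appears in the expansion with coefficient $1$.'' It does not: by definition, $L_{C(\cM_w,i,B),i}^{\cA_w}$ is normalized to have $x_i$-coefficient $1$, so the factor $x_i - L_{C(\cM_w,i,B),i}^{\cA_w}$ has \emph{no} $x_i$ term at all — it is a linear form supported on $C(\cM_w,i,B)\setminus\{i\} \subset B$. Consequently $g_B^{\cA_w}$ is a polynomial only in the variables $\{x_j \mid j\in B\}$ (as the paper notes when introducing $g_B^\cA$), and the monomial $x_1\cdots x_n$ never appears. This matters because that false claim was the sole justification you offered for the ``combine without interference'' step. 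The correct reason that $\init_w(g_B^\cA - 1) = \init_w(g_B^\cA) - 1$ is the one the paper gives: $g_B^\cA$ is homogeneous of degree $n \geq 1$, hence has no constant term, so $0 \notin \supp(\init_w g_B^\cA)$ and no cancellation against $-1$ can occur. Your non-vanishing remarks about the product itself are also unnecessary: $k[x_1,\dots,x_n]$ is a domain, so once each linear factor is nonzero (which does use loop-freeness of $\cM_w$, as you correctly say), the product is automatically nonzero. With the degree-$n$ homogeneity observation in place of the erroneous $x_1\cdots x_n$ claim, your argument coincides with the paper's proof.
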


\begin{proof}
Write $w = (w_1, \dots, w_n)$. By \autoref*{initialformxicancelnormalizedcircuitforms},
\begin{align*}
	\init_w g_B^\cA &= x_B \prod_{i \in \{1, \dots, n\} \setminus B} \init_w(x_i - L_{C(\cM, i, B), i}^\cA)\\
	&= x_B \prod_{i \in \{1, \dots, n\} \setminus B} (x_i - L_{C(\cM_w, i, B), i}^{\cA_w})\\
	&= g_B^{\cA_w},
\end{align*}
and for all $u \in \supp(\init_w g_B^\cA)$,
\[
	u \cdot w = w_1 + \dots + w_n = 0.
\]
Because $g_B^\cA$ is homogeneous of degree $n$, and in particular $0 \notin \supp(\init_w g_B^\cA)$, this implies that
\[
	\init_w(g_B^\cA - 1) = \init_w(g_B^\cA) - 1 = g_B^{\cA_w} - 1.
\]
\end{proof}

Now let $\cA \in \Gr_\cM(k)$ and $w \in \Trop(\cM) \cap (1, \dots, 1)^\perp$. We complete the proof of \autoref*{initialmilnorismilnorinitial}.

\begin{proof}[Proof of \autoref*{initialmilnorismilnorinitial}]
By construction, the ideal defining $F_\cA$ in $\bG_{m,k}^n$ is generated by the image of $I_\cA$ in $k[x_1^{\pm 1}, \dots, x_n^{\pm 1}]$. Thus the ideal defining $\init_w F_\cA$ in $\bG_{m,k}^n$ is generated by the image of $(\init_w f \, | \, f \in I_\cA) \subset k[x_1, \dots, x_n]$ in $k[x_1^{\pm 1}, \dots, x_n^{\pm 1}]$. 

Let $B \in \cB(\cM_w)$. Then by \autoref*{wmaximalextrahasminimalweight}, \autoref*{initialformfundamentalcircuit}, \autoref*{generatorsforMilnorfiberinaffinespace}, and \autoref*{initialdegenerationofextragpolynomial}, the ideal defining $\init_w F_\cA$ in $\bG_{m,k}^n$ is generated by
\[
	\{L_{C(\cM_w,i,B)}^{\cA_w} \, | \, i \in \{1, \dots, n\} \setminus B\} \cup \{g_B^{\cA_w} - 1\} \subset k[x_1^{\pm 1}, \dots, x_n^{\pm 1}].
\]
The theorem thus follows from \autoref*{generatorswithgforMilnorideal}.
\end{proof}

\section{Boundary complex of the Milnor fiber}
\label{boundarycomplexsection}

Let $d, n \in \Z_{>0}$, and let $\cM$ be a rank $d$ loop-free matroid on $\{1, \dots, n\}$. In this section, we will prove \autoref*{boundarycomplexmilnorfiber}. 

We begin by proving the following combinatorial lemma.

\begin{lemma}
\label{parallelafterdegenerationgivesparallelbefore}
Let $w = (w_1, \dots, w_n) \in \Trop(\cM)$, let $i \in \{1, \dots, n\}$ such that $w_i = \max_{\ell \in \{1, \dots, n\}} w_\ell$, and suppose $j \in \{1, \dots, n\}$ is such that $i$ and $j$ are parallel in $\cM_w$. Then $i$ and $j$ are parallel in $\cM$.
\end{lemma}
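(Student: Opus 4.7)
The plan is to use the fundamental circuit machinery set up in the previous section, combined with the maximality hypothesis on $w_i$, to transfer the circuit $\{i,j\}$ from $\cM_w$ to $\cM$.

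First I would dispose of the trivial case $i = j$, using that the parallel relation is by definition reflexive. So assume $i \neq j$. Since $i$ and $j$ are parallel in the loop-free matroid $\cM_w$, the set $\{i,j\}$ is a circuit of $\cM_w$. In particular $\{j\}$ is independent in $\cM_w$, so I can extend $\{j\}$ to a basis $B \in \cB(\cM_w)$. Because $\{i,j\} \subseteq B \cup \{i\}$ would make $B \cup \{i\}$ properly contain a circuit while $B$ itself is independent, we must have $i \notin B$. The fundamental circuit $C(\cM_w, i, B)$, being the unique circuit of $\cM_w$ in $B \cup \{i\}$, is therefore exactly $\{i,j\}$.

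Next, since $\cB(\cM_w) \subseteq \cB(\cM)$ by the definition of $\cM_w$, the set $B$ is also a basis of $\cM$, so the fundamental circuit $C(\cM, i, B)$ makes sense. I now combine two facts from the preceding section. By \autoref{initialformfundamentalcircuit},
\[
    \{i,j\} = C(\cM_w, i, B) = \{\, j' \in C(\cM, i, B) \,:\, w_{j'} = w_i \,\}.
\]
By \autoref{wmaximalextrahasminimalweight}, every $j' \in C(\cM, i, B)$ satisfies $w_{j'} \geq w_i$. Combined with the maximality assumption $w_i = \max_\ell w_\ell$, this forces $w_{j'} = w_i$ for every $j' \in C(\cM, i, B)$. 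Substituting back, the displayed equation collapses to $C(\cM, i, B) = \{i,j\}$, so $\{i,j\}$ is a circuit of $\cM$, i.e.\ $i$ and $j$ are parallel in $\cM$.

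There is no serious obstacle: the argument is two lines once the right basis $B$ is selected. The only subtle point is making sure that $B$ can be chosen to contain $j$ but not $i$ so that $C(\cM_w, i, B)$ is genuinely $\{i,j\}$; this uses nothing beyond the fact that $\{i,j\}$ is a circuit and $\{j\}$ is independent in $\cM_w$. The rest is a direct application of \autoref{wmaximalextrahasminimalweight} and \autoref{initialformfundamentalcircuit}, exploiting that the inequality $w_{j'} \ge w_i$ in $C(\cM, i, B)$ must be an equality because $w_i$ is globally maximal.
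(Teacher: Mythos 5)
Your proof is correct and follows essentially the same route as the paper's: both choose a basis $B \in \cB(\cM_w)$ containing $j$ but not $i$ so that $C(\cM_w,i,B)=\{i,j\}$, and then combine \autoref{wmaximalextrahasminimalweight} and \autoref{initialformfundamentalcircuit} with the global maximality of $w_i$ to conclude $C(\cM,i,B)=\{i,j\}$.
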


\begin{proof}
We may assume that $i \neq j$. Because $\cM_w$ is loop-free, there exists $B \in \cB(\cM_w)$ such that $j \in B$. Because $i$ and $j$ are parallel in $\cM_w$, we have that $i \notin B$ and $C(\cM_w, i, B) = \{i,j\}$. By \autoref*{wmaximalextrahasminimalweight} and the hypotheses,
\[
	w_i = \min_{\ell \in C(\cM, i, B)} w_\ell \leq \max_{\ell \in C(\cM, i, B)}w_\ell \leq w_i.
\]
Then by \autoref*{initialformfundamentalcircuit},
\[
	\{i, j\} = C(\cM_w, i, B) = \{\ell \in C(\cM, i, B) \, | \, w_\ell = w_i\} = C(\cM, i, B),
\]
so $i$ and $j$ are parallel in $\cM$.
\end{proof}

We can now prove the following irreducibility statement.

\begin{proposition}
Suppose that $\cM$ has no pairs of distinct parallel elements, and let $\cA \in \Gr_\cM(k)$. Then for all $w \in \R^n$, the initial degeneration $\init_w F_\cA$ is irreducible.
\end{proposition}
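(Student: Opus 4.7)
The plan is to reduce to the case $w \in \Trop(\cM) \cap (1,\dots,1)^\perp$ and then apply the irreducibility criterion for Milnor fibers stated in the preliminaries. If $w \notin \Trop(\cM) \cap (1,\dots,1)^\perp$, then by the proof of \autoref*{tropicalizationofmilnorfiber} the initial degeneration $\init_w F_\cA$ is empty and there is nothing to prove. So the real case is $w \in \Trop(\cM) \cap (1,\dots,1)^\perp$, and by \autoref*{initialmilnorismilnorinitial} we have $\init_w F_\cA = F_{\cA_w}$. Thus it suffices to show that $F_{\cA_w}$ is irreducible.

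To verify irreducibility of $F_{\cA_w}$, I would appeal to the fact recorded in the preliminaries on Milnor fibers of hyperplane arrangements: if the ground matroid is loop-free and the greatest common divisor of the sizes of its parallel classes equals $1$, then the Milnor fiber is irreducible. Since $w \in \Trop(\cM)$, the matroid $\cM_w$ is loop-free by definition, so only the gcd condition remains to be checked.

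For the gcd condition, the key step is to exhibit a parallel class in $\cM_w$ of size $1$, which by itself forces the gcd to be $1$. I would pick any $i \in \{1, \dots, n\}$ such that $w_i = \max_\ell w_\ell$. Suppose $j$ is parallel to $i$ in $\cM_w$; then \autoref*{parallelafterdegenerationgivesparallelbefore} gives that $i$ and $j$ are parallel in $\cM$. Since $\cM$ has no pairs of distinct parallel elements by hypothesis, we conclude $i = j$, so the parallel class of $i$ in $\cM_w$ is the singleton $\{i\}$. Hence $\gcd$ of the parallel class sizes of $\cM_w$ is $1$, and $F_{\cA_w}$ is irreducible.

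There is no real obstacle here: all the ingredients are already assembled. The slightly delicate point is simply recognizing that one only needs a single parallel class of size $1$ in $\cM_w$ to trigger the irreducibility criterion, and that \autoref*{parallelafterdegenerationgivesparallelbefore} is precisely tailored to produce such a class (at the index of maximal weight) under the hypothesis that $\cM$ has no distinct parallel pairs.
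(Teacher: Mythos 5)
Your proof is correct and follows the same approach as the paper's: reduce to $w \in \Trop(\cM) \cap (1,\dots,1)^\perp$, use $\init_w F_\cA = F_{\cA_w}$, pick $i$ with $w_i$ maximal, and apply \autoref{parallelafterdegenerationgivesparallelbefore} to produce a singleton parallel class in $\cM_w$, which triggers the gcd-based irreducibility criterion from the preliminaries. You have merely spelled out the steps that the paper's terser proof leaves implicit.
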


\begin{proof}
We may assume that $w \in \Trop(F_\cA) = \Trop(\cM) \cap (1, \dots, 1)^\perp$. Then by \autoref*{parallelafterdegenerationgivesparallelbefore}, there exists $i \in \{1, \dots, n\}$ that is not parallel in $\cM_w$ to any other element. Therefore $F_{\cA_w}$, which is equal to $\init_w F_\cA$ by \autoref*{initialmilnorismilnorinitial}, is irreducible.
\end{proof}

Now suppose that $\cM$ has no pairs of distinct parallel elements, assume that the characteristic of $k$ does not divide $n$, and let $\cA \in \Gr_\cM(k)$. Let $\Delta$ be a unimodular fan in $\Z^n$ supported on $\Trop(F_\cA)$, let $Y(\Delta)$ be the smooth $\bG_{m,k}^n$-toric variety defined by $\Delta$, and let $F_\cA^\Delta$ be the closure of $F_\cA$ in $Y(\Delta)$. Because $F_\cA$ is sch\"{o}n in $\bG_{m,k}^n$, we have that $F_\cA \subset F_\cA^\Delta$ is a simple normal crossing compactification. By \cite{Thuillier}, the dual complex of $F_\cA^\Delta \setminus F_\cA$ is homotopy equivalent to the dual complex of $\overline{F_\cA}\setminus F_\cA$ for any simple normal crossing compactification $F_\cA \subset \overline{F_\cA}$. Therefore \autoref*{boundarycomplexmilnorfiber} follows from the next proposition.

\begin{proposition}
The dual complex of $F_\cA^\Delta \setminus F_\cA$ is homotopy equivalent to a $\mu(\cM)$-wedge of dimension $d-2$ spheres.
\end{proposition}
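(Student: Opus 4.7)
The plan is to identify the dual complex combinatorially with the Bergman complex of $\cM$ and then quote the homotopy computation of Ardila--Klivans.

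First I would show that for every nonzero cone $\sigma \in \Delta$, the stratum $F_\cA^\Delta \cap \cO_\sigma$ is nonempty and irreducible. For nonemptiness: since $\Delta$ is supported on $\Trop(F_\cA) = \Trop(\cM) \cap (1,\dots,1)^\perp$ (\autoref{tropicalizationofmilnorfiber}), any $w \in \relint(\sigma)$ lies in $\Trop(F_\cA)$, so $\init_w F_\cA \neq \emptyset$; then the fact recalled from tropical compactifications gives $\init_w F_\cA \cong \bG_{m,k}^{\dim \sigma} \times_k (F_\cA^\Delta \cap \cO_\sigma)$, forcing $F_\cA^\Delta \cap \cO_\sigma$ to be nonempty. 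For irreducibility: by \autoref{initialmilnorismilnorinitial} we have $\init_w F_\cA = F_{\cA_w}$, and the preceding proposition shows this is irreducible under the no-parallel-elements hypothesis, so $F_\cA^\Delta \cap \cO_\sigma$ is irreducible as well.

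Since $\Delta$ is unimodular, $F_\cA^\Delta \setminus F_\cA$ is a simple normal crossing divisor whose irreducible components are the divisors $D_\rho := \overline{F_\cA^\Delta \cap \cO_\rho}$ indexed by the rays $\rho$ of $\Delta$. The intersection $D_{\rho_0}\cap \cdots \cap D_{\rho_k}$ equals $\overline{F_\cA^\Delta \cap \cO_\sigma}$ whenever $\rho_0,\dots,\rho_k$ span a cone $\sigma$ of $\Delta$ and is empty otherwise; by the irreducibility just established, each such nonempty intersection is connected and contributes exactly one $k$-simplex to the dual complex. Thus $\Delta(\partial F_\cA^\Delta)$ is the simplicial complex whose $k$-simplices are in bijection with the $(k+1)$-dimensional cones of $\Delta$, with face relations given by face inclusion of cones. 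Concretely, this is the link of the origin in $|\Delta|$, i.e., the topological space obtained by intersecting $|\Delta|$ with any sphere around the origin.

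Now I would identify this link with the Bergman complex of $\cM$. Since the support $|\Delta| = \Trop(\cM) \cap (1,\dots,1)^\perp$ is a transverse slice of the Bergman fan $\Trop(\cM)$ through its lineality space $\R \cdot (1,\dots,1)$, the link of the origin in $|\Delta|$ is canonically homeomorphic to the link of the origin in the quotient fan $\Trop(\cM)/\R(1,\dots,1)$, and the latter is by definition the Bergman complex of $\cM$. Finally, Ardila and Klivans's theorem \cite{ArdilaKlivans} states that for a loop-free rank $d$ matroid $\cM$, the Bergman complex is homotopy equivalent to a wedge of $\mu(\cM)$ spheres of dimension $d-2$, completing the proof.

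The main technical point is the irreducibility in paragraph one: without it, a single cone could contribute multiple simplices (or one would have only a $\Delta$-complex rather than a simplicial complex), and the identification with the Bergman complex would break down. Everything else is a formal translation between tropical and boundary-complex combinatorics plus the cited homotopy computation.
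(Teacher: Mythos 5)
Your proof is correct and follows the same strategy as the paper: establish irreducibility of the boundary strata, identify the dual complex with the Bergman complex of $\cM$ (equivalently, a spherical slice of $\Trop(\cM)\cap(1,\dots,1)^\perp$), and quote Ardila--Klivans. You spell out the combinatorial bookkeeping (nonemptiness and indexing of strata by cones of $\Delta$) in more detail than the paper's terse version, but the key inputs and overall structure are identical.
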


\begin{proof}
Because $F_\cA$ has irreducible initial degenerations, each intersection of $F_\cA^\Delta$ with a torus orbit of $Y(\Delta)$ is irreducible. Thus the dual complex of $F_\cA^\Delta \setminus F_\cA$ is homeomorphic to the dual complex of $Y(\Delta) \setminus \bG_{m,k}^n$, which is homeomorphic to the intersection of $\Trop(F_\cA) = \Trop(\cM) \cap (1, \dots, 1)^\perp$ with the unit sphere in $\R^n$. By \cite{ArdilaKlivans}[corollary to Theorem 1], this intersection is homotopy equivalent to a $\mu(\cM)$-wedge of dimension $d-2$ spheres.
\end{proof}

\section{Functions defining tropical compactifications}
\label{functionsdefiningtropicalcompactifications}

Throughout Sections \ref*{functionsdefiningtropicalcompactifications}, \ref*{tropicalcompactificationsandgroupactions}, and \ref*{schoncompactificationsinfamilies}, let $n \in \Z_{> 0}$, let $M \cong \Z^n$ be a lattice, let $N = M^\vee$, let $T = \Spec(k[M])$ be the algebraic torus with character lattice $M$, and for each $u \in M$, let $\chi^u \in k[M]$ denote the corresponding character. The following notation and lemma will be used in Sections \ref*{functionsdefiningtropicalcompactifications} and \ref*{tropicalcompactificationsandgroupactions}.

\begin{notation}
If $\sigma$ is a cone in $N$, we will let $\psi_\sigma: k[\sigma^\vee \cap M] \to k[\sigma^\perp \cap M]$ denote the $k$-algebra map defined by
\[
	\chi^u \mapsto \begin{cases}\chi^u, & u \in \sigma^\perp \cap M, \\ 0, &\text{otherwise},  \end{cases}
\]
and we will let $\varphi_\sigma: k[\sigma^\vee \cap M] \to k[M]$ denote the composition of $\psi_\sigma$ with the inclusion $k[\sigma^\perp \cap M] \hookrightarrow k[M]$.
\end{notation}

\begin{lemma}
\label{boundaryinitial}
Let $f \in k[M]$, and let $\sigma$ be a cone in $N$ such that for each face $\tau$ of $\sigma$ and each pair $w_1, w_2 \in \relint(\tau)$, we have that
\[
	\init_{w_1}f = \init_{w_2}f.
\]
Let $u \in M$ be such that $-u \in \supp(\init_w f)$ for all $w \in \relint(\sigma)$.

Then for all faces $\tau$ of $\sigma$,
\[
	\chi^u f \in k[\tau^\vee \cap M],
\]
and for all $w \in \relint(\tau)$,
\[
	\varphi_\tau(\chi^u f) = \chi^u \init_w f.
\]
\end{lemma}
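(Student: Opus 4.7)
The plan is to derive both parts from a careful analysis of which terms of $f$ appear in its various initial forms. Writing $f = \sum_{v \in \supp f} c_v \chi^v$ with $c_v \in k^\times$, I would first translate the hypothesis ``$-u \in \supp(\init_w f)$'' (using the minimum convention for initial forms consistent with the rest of the paper) into the inequality $(v + u) \cdot w \geq 0$ for every $v \in \supp f$ and every $w \in \relint(\sigma)$, with equality exactly when $v \in \supp(\init_w f)$.

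For the first conclusion, I would extend this inequality from $\relint(\sigma)$ to all of $\sigma$ by continuity of the linear pairing $w \mapsto (v + u) \cdot w$. This gives $v + u \in \sigma^\vee \cap M$ for every $v \in \supp f$, so $\chi^u f \in k[\sigma^\vee \cap M]$. Since any face $\tau$ of $\sigma$ satisfies $\sigma^\vee \subseteq \tau^\vee$, the containment $\chi^u f \in k[\tau^\vee \cap M]$ is immediate.

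For the second conclusion, the key intermediate step is to upgrade ``$-u \in \supp(\init_w f)$'' from $w \in \relint(\sigma)$ to $w \in \relint(\tau)$ for an arbitrary face $\tau$ of $\sigma$. I plan to do this via a perturbation argument: fix $w_0 \in \relint(\sigma)$, let $w \in \relint(\tau)$, and note that $w + \epsilon w_0 \in \relint(\sigma)$ for all $\epsilon > 0$ by a standard convex-geometry fact. The uniformity-on-faces hypothesis then equates $\init_{w + \epsilon w_0} f$ with the common value $\init_{w_0} f$, while a direct computation for small $\epsilon$ identifies $\init_{w + \epsilon w_0} f$ as the $\init_{w_0}$-part of $\init_w f$; comparing gives $\supp(\init_{w_0} f) \subseteq \supp(\init_w f)$, so in particular $-u \in \supp(\init_w f)$. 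I would then finish by expanding $\varphi_\tau(\chi^u f)$ term by term: the summand $c_v \chi^{v + u}$ survives exactly when $v + u \in \tau^\perp$; since $v + u \in \tau^\vee$ by the first part, the vanishing $(v + u) \cdot w = 0$ at an interior point of $\tau$ propagates to all of $\tau$, so $v + u \in \tau^\perp$ is equivalent to $(v + u) \cdot w = 0$, which by the previous step is equivalent to $v \in \supp(\init_w f)$; summing these surviving terms gives exactly $\chi^u \init_w f$.

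The main obstacle I expect is bookkeeping rather than a single hard estimate: keeping the min-versus-max convention for initial forms consistent, and carefully distinguishing when we are working inside $\sigma$ versus inside one of its faces. The one substantive idea is the perturbation $w \mapsto w + \epsilon w_0$, which is what lets the hypothesis stated only on $\relint(\sigma)$ cascade down to every face $\tau$.
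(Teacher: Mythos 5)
Your proof is correct and follows essentially the same route as the paper's: both hinge on showing that $\supp(\init_{w_0} f) \subseteq \supp(\init_w f)$ for $w_0 \in \relint(\sigma)$ and $w \in \relint(\tau)$, then converting the resulting weight inequalities into cone-membership statements. Your perturbation $w \mapsto w + \epsilon w_0$ is a concrete instantiation of the continuity step the paper invokes, and your observation that $\sigma^\vee \subseteq \tau^\vee$ makes the first conclusion immediate is a harmless streamlining.
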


\begin{proof}
By the hypotheses, for each face $\tau$ of $\sigma$, there exists $f_\tau \in k[M]$ such that $f_\tau = \init_w f$ for all $w \in \relint(\tau)$.

Let $\tau$ be a face of $\sigma$. By continuity and the definition of initial forms,
\[
	-u \in \supp(f_\sigma) \subset \supp(f_\tau).
\]
Thus for all $v \in \supp(f_\tau)$ and all $w \in \relint(\tau)$,
\[
	\langle v, w \rangle = \langle -u, w \rangle,
\]
and for all $v \in \supp(f) \setminus \supp(f_\tau)$ and all $w \in \relint(\tau)$,
\[
	\langle v, w \rangle > \langle -u, w \rangle.
\]
Thus by continuity, for all $v \in \supp(f_\tau)$ and all $w \in \tau$,
\[
	\langle v, w \rangle = \langle -u, w \rangle,
\]
and for all $v \in \supp(f) \setminus \supp(f_\tau)$ and all $w \in \tau$,
\[
	\langle v, w \rangle \geq \langle -u, w \rangle.
\]
This implies that for each $v \in \supp(f_\tau)$,
\[
	u+v \in \tau^\perp \cap M,
\]
and for each $v \in \supp(f) \setminus \supp(f_\tau)$,
\[
	u+v \in (\tau^\vee \cap M) \setminus (\tau^\perp \cap M).
\]
Therefore,
\[
	\chi^u f \in k[\tau^\vee \cap M].
\]
Now write
\[
	f = \sum_{v \in \supp(f)} a_v \chi^v,
\]
where each $a_v \in k^\times$. Then for all $w \in \relint(\tau)$,
\begin{align*}
	\varphi_\tau(\chi^u f) &= \sum_{v \in \supp(f_\tau)} a_v \varphi_\tau(\chi^{u+v}) + \sum_{v \in \supp(f) \setminus \supp(f_\tau)} a_v \varphi_\tau(\chi^{u+v})\\
	&= \sum_{v \in \supp(f_\tau)} a_v \chi^{u+v}\\
	&= \chi^u \init_w f.
\end{align*}
\end{proof}

In the remainder of this section we show that under certain conditions, we can give functions that define partial compactifications of subvarieties of tori.

Let $d \in \Z_{>0}$, let $f_1, \dots, f_{n-d} \in k[M]$, let $\Delta, \overline{\Delta}$ be fans in $N$, and let $\sigma \in \overline{\Delta}$. Let $X \hookrightarrow T$ be the closed subscheme defined by the ideal generated by $f_1, \dots, f_{n-d}$, and suppose that the following hypotheses hold.

\begin{enumerate}[(i)]

\item $X$ is pure dimension $d$ and reduced.

\item $\Delta \subset \overline{\Delta}$.

\item $\overline{\Delta}$ is unimodular.

\item $\Delta$ is a tropical fan for $X \hookrightarrow T$.

\item For each face $\tau$ of $\sigma$, each pair $w_1, w_2 \in \relint(\tau)$, and each $i \in \{1, \dots, n-d\}$,
\[
	\init_{w_1}f_i = \init_{w_2} f_i.
\]

\item For each $w \in \sigma$, we have that $\init_w f_1, \dots, \init_w f_{n-d}$ generate the ideal defining $\init_w X$ in $T$.

\end{enumerate}

Let $Y(\sigma) = \Spec(k[\sigma^\vee \cap M])$ be the affine $T$-toric variety defined by $\sigma$, and let $X^\sigma \hookrightarrow Y(\sigma)$ be the closure of $X$.

\begin{proposition}
\label{tcideal}
Let $u_1, \dots, u_{n-d} \in M$ be such that for each $i \in \{1, \dots, n-d\}$,
\[
	-u_i \in \supp(\init_w f_i)
\]
for all, or equivalently some, $w \in \relint(\sigma)$.

Then $\chi^{u_1}f_1, \dots, \chi^{u_{n-d}}f_{n-d} \in k[\sigma^\vee \cap M]$, and $\chi^{u_1}f_1, \dots, \chi^{u_{n-d}}f_{n-d}$ generate the ideal defining $X^\sigma$ in $Y(\sigma)$.
\end{proposition}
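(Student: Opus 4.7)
The first claim, that $\chi^{u_i}f_i \in k[\sigma^\vee \cap M]$, will follow immediately from \autoref{boundaryinitial} applied with $\tau = \sigma$, $f = f_i$, and $u = u_i$; hypothesis (v) supplies the required constancy of initial forms on each face of $\sigma$, and hypothesis (vi) (or simple inspection of $f_i$'s initial form) guarantees that the chosen $-u_i$ does lie in $\supp(\init_w f_i)$ for $w \in \relint(\sigma)$.

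For the ideal statement, set $I = (\chi^{u_1}f_1, \ldots, \chi^{u_{n-d}}f_{n-d}) \subseteq k[\sigma^\vee \cap M]$ and let $I_{X^\sigma}$ be the ideal defining $X^\sigma$ in $Y(\sigma)$. The containment $I \subseteq I_{X^\sigma}$ is immediate since each $\chi^{u_i}f_i$ is a unit multiple of $f_i$ in $k[M]$ and hence vanishes on $X$, and therefore on its closure $X^\sigma$. For the reverse inclusion my plan is to prove the stronger scheme-theoretic equality $V(I) = X^\sigma$ in $Y(\sigma)$.

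To carry this out I would analyze $V(I)$ orbit by orbit. On the open torus, $V(I) \cap T = V(f_1, \ldots, f_{n-d}) = X$, which is pure of dimension $d$ and reduced by hypothesis (i). For each face $\tau \preceq \sigma$ and each $w \in \relint(\tau)$, \autoref{boundaryinitial} identifies $\psi_\tau(\chi^{u_i}f_i)$ with $\chi^{u_i}\init_w f_i$ inside $k[\tau^\perp \cap M]$, and by hypothesis (vi) together with the fact that $\chi^{u_i}$ is a unit in $k[M]$, these elements generate the ideal defining $\init_w X$ in $T$. When $w \in \Trop(X)$, $\init_w X$ is pure of dimension $d$ and, since its defining equations $\chi^{u_i}\init_w f_i$ are supported in $\tau^\perp$, is invariant under the subtorus of $T$ dual to the span of $\tau$, so it descends to a subscheme of $O_\tau$ of dimension $d - \dim\tau$. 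When $w \notin \Trop(X)$, $\init_w X = \emptyset$, and the unimodularity of $\sigma$ makes $\tau^\perp \cap M$ a saturated sublattice of $M$, which yields a splitting of $k[\tau^\perp \cap M] \hookrightarrow k[M]$ that transfers the unit ideal back to the unit ideal, so $V(I) \cap O_\tau = \emptyset$. In every case, $\dim(V(I) \cap O_\tau) \leq d - \dim\tau$, strictly less than $d$ whenever $\tau \neq 0$.

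I would finish with a dimension and Cohen--Macaulay argument. Krull's height theorem applied to the $(n-d)$-generated ideal $I$ in the regular ring $k[\sigma^\vee \cap M]$ forces every irreducible component of $V(I)$ to have dimension at least $d$; combined with the orbit bound this forces every generic point of $V(I)$ into $T$, so $V(I)$ is pure of dimension $d$. Hence $V(I)$ is a codimension $n-d$ complete intersection in the regular scheme $Y(\sigma)$, in particular Cohen--Macaulay. Because $V(I) \cap T = X$ is reduced and contains every generic point of $V(I)$, $V(I)$ is generically reduced, and a Cohen--Macaulay scheme that is generically reduced is reduced. Its underlying topological space is the closure of $X$ in $Y(\sigma)$, namely $|X^\sigma|$, and since we already have $V(I) \supseteq X^\sigma$, the two agree as closed subschemes. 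The main obstacle is the uniform orbit-by-orbit dimension bound: establishing that the descent to $O_\tau$ has codimension exactly $n-d$ for faces inside $\Trop(X)$ and is empty for faces outside $\Trop(X)$ is precisely where the hypotheses (v), (vi), and the unimodularity of $\sigma$ combine.
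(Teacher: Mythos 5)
Your proof is correct and follows essentially the same route as the paper's: both deduce $\chi^{u_i}f_i \in k[\sigma^\vee\cap M]$ from \autoref{boundaryinitial}, both analyze $V(I)$ orbit by orbit by pulling back $V(I)\cap\cO_\tau$ along $T\to\cO_\tau$ to recover $\init_w X$, and both finish by combining Cohen--Macaulayness (from the complete-intersection codimension count) with generic reducedness. The only difference is packaging: you obtain pure dimensionality of $V(I)$ from Krull's height theorem together with the orbit-by-orbit dimension bound, whereas the paper first shows that the closed immersion $X^\sigma\hookrightarrow V(I)$ is surjective on each orbit and then reads off the dimension from $X^\sigma$.
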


\subsection{Proof of \autoref*{tcideal}}

Let $u_1, \dots, u_{n-d} \in M$ be such that for each $i \in \{1, \dots, n-d\}$,
\[
	-u_i \in \supp(\init_w f_i)
\]
for all $w \in \relint(\sigma)$. We now prove the first part of \autoref*{tcideal}.

\begin{proposition}
We have that $\chi^{u_1}f_1, \dots, \chi^{u_{n-d}}f_{n-d} \in k[\sigma^\vee \cap M]$.
\end{proposition}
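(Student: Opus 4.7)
The plan is to apply \autoref*{boundaryinitial} directly to each pair $(f_i, u_i)$ with the cone $\sigma$ playing the role of $\sigma$ in the lemma. The hypotheses of the lemma are verified as follows: hypothesis (v) of the standing assumptions says exactly that for each face $\tau$ of $\sigma$ and each pair $w_1, w_2 \in \relint(\tau)$, one has $\init_{w_1} f_i = \init_{w_2} f_i$, which is the constancy-of-initial-form hypothesis. The assumption on $u_i$ that $-u_i \in \supp(\init_w f_i)$ for all $w \in \relint(\sigma)$ is exactly the remaining hypothesis.

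Applied with $\tau = \sigma$, \autoref*{boundaryinitial} then yields $\chi^{u_i} f_i \in k[\sigma^\vee \cap M]$ for each $i \in \{1, \dots, n-d\}$, which is the desired conclusion.

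The only thing worth remarking on is the parenthetical "for all, or equivalently some, $w \in \relint(\sigma)$" in the proposition statement. Hypothesis (v) of the standing assumptions tells us that $\init_w f_i$ is independent of the choice of $w \in \relint(\sigma)$ (this is the $\tau = \sigma$ instance of the hypothesis), so membership of $-u_i$ in $\supp(\init_w f_i)$ for one $w \in \relint(\sigma)$ is equivalent to membership for all $w \in \relint(\sigma)$. This justifies the parenthetical remark and ensures the hypothesis of \autoref*{boundaryinitial} is unambiguously satisfied.

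There is no substantive obstacle here; the proposition is essentially an unpacking of \autoref*{boundaryinitial} for the tuple of generators $f_1, \dots, f_{n-d}$. The more interesting content — that these $\chi^{u_i} f_i$ actually \emph{generate} the ideal of $X^\sigma$ in $Y(\sigma)$ — is the second half of \autoref*{tcideal} and will presumably use hypothesis (vi) together with the explicit formula $\varphi_\tau(\chi^{u_i} f_i) = \chi^{u_i} \init_w f_i$ from \autoref*{boundaryinitial} to control restrictions to torus-orbit closures.
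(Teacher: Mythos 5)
Your proof is correct and is precisely the paper's argument: the paper's proof of this proposition reads in full ``This follows directly from \autoref*{boundaryinitial},'' and your write-up simply unpacks that citation by checking the hypotheses of the lemma against the standing assumptions (hypothesis (v) for constancy of initial forms on relative interiors of faces, and the assumption on $u_i$). No discrepancy to report.
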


\begin{proof}
This follows directly from \autoref*{boundaryinitial}.
\end{proof}

Now let $X' \hookrightarrow Y(\sigma)$ be the closed subscheme defined by the ideal generated by $\chi^{u_1}f_1, \dots, \chi^{u_{n-d}}f_{n-d}$. Note that by construction, $X^\sigma$ is a closed subscheme of $X'$.

\begin{proposition}
\label{surjectiveoneachorbit}
Let $\tau$ be a face of $\sigma$, and let $\cO_\tau$ be the torus orbit of $Y(\sigma)$ associated to $\tau$. Then the closed immersion
\[
	X^\sigma \cap \cO_\tau \hookrightarrow X' \cap \cO_\tau
\]
is surjective.
\end{proposition}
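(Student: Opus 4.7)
The plan is to prove the stronger statement that $X^\sigma \cap \cO_\tau = X' \cap \cO_\tau$ as closed subschemes of $\cO_\tau$; since $X^\sigma \subset X'$ as closed subschemes of $Y(\sigma)$, the given closed immersion will then be an equality, hence surjective. The first step is to compute the defining ideal of $X' \cap \cO_\tau$ in the coordinate ring $k[\tau^\perp \cap M]$ of $\cO_\tau$. Fix $w \in \relint(\tau)$. Applying \autoref*{boundaryinitial} to each $f_i$ with $u = u_i$ shows that $\chi^{u_i} f_i \in k[\tau^\vee \cap M]$ and $\varphi_\tau(\chi^{u_i} f_i) = \chi^{u_i} \init_w f_i \in k[\tau^\perp \cap M]$; consequently the defining ideal of $X' \cap \cO_\tau$ is generated by $\{\chi^{u_i} \init_w f_i \, | \, i = 1, \dots, n-d\}$.

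Next, I would lift this to the ambient torus $T$. Splitting the short exact sequence $0 \to \tau^\perp \cap M \to M \to K \to 0$, with $K$ of rank $\dim \tau$, yields $T \cong \cO_\tau \times_k \bG_{m,k}^{\dim \tau}$ and $k[M] = k[\tau^\perp \cap M] \otimes_k k[K]$. Since the generators $\chi^{u_i} \init_w f_i$ already lie in $k[\tau^\perp \cap M]$, and, by hypothesis (vi) together with the invertibility of each $\chi^{u_i}$ in $k[M]$, these same elements generate the defining ideal of $\init_w X$ in $k[M]$, extending the ideal from $k[\tau^\perp \cap M]$ to $k[M]$ by tensoring gives $\init_w X = (X' \cap \cO_\tau) \times_k \bG_{m,k}^{\dim \tau}$ as closed subschemes of $T$.

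Now suppose $\tau \in \Delta$. The fact recalled after \autoref*{LuxtonQufansupportedontropistropicalfan} in the preliminaries furnishes an isomorphism $\init_w X \cong \bG_{m,k}^{\dim \tau} \times_k (X^\Delta \cap \cO_\tau)$, compatible with the $\bG_{m,k}^{\dim \tau}$-torsor structure on $T \to \cO_\tau$. Comparing with the previous product description identifies $X^\Delta \cap \cO_\tau$ with $X' \cap \cO_\tau$ inside $\cO_\tau$. Since $Y(\tau)$ is a common open neighborhood of $\cO_\tau$ in both $Y(\sigma)$ and $Y(\Delta)$, closures may be computed locally there, giving $X^\sigma \cap \cO_\tau = X^\Delta \cap \cO_\tau$ and completing the argument in this case.

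The main obstacle is the case $\tau \in \overline{\Delta} \setminus \Delta$. If $\relint(\tau) \cap |\Delta| = \emptyset$, then hypothesis (vi) forces $(\init_w f_i)$ to be the unit ideal in $k[M]$; by the tensor decomposition, $(\chi^{u_i} \init_w f_i)$ is also the unit ideal in $k[\tau^\perp \cap M]$, so both $X^\sigma \cap \cO_\tau$ and $X' \cap \cO_\tau$ are empty and the claim is trivial. When $\relint(\tau) \subset |\Delta|$, I would reduce to the earlier case by subdividing $\Delta$ to include $\tau$; verifying that the refinement remains a tropical fan under the present hypotheses, without a priori sch\"{o}nness, is the delicate point that would still need to be handled.
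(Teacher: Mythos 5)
Your overall strategy parallels the paper's, but two things go wrong.

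First, the ``main obstacle'' you identify at the end does not exist. Since $\Delta \subset \overline{\Delta}$ is a subfan and $\tau \in \overline{\Delta}$, the relative interiors of the cones of $\overline{\Delta}$ partition $|\overline{\Delta}|$, so $\relint(\tau)$ meets $|\Delta|$ if and only if $\tau$ itself belongs to $\Delta$. There is no cone $\tau \in \overline{\Delta}\setminus\Delta$ with $\relint(\tau) \subset |\Delta|$, and hence no need to subdivide $\Delta$ or worry about preserving tropicality. Your two genuine cases ($\tau \in \Delta$ and $\relint(\tau) \cap |\Delta| = \emptyset$) already cover everything, and the paper in fact handles them uniformly: if $\tau \notin \Delta$ then both $\init_w X$ and $X^\sigma \cap \cO_\tau$ are empty, so the isomorphism from the Fact holds trivially.

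Second, you aim for scheme-theoretic equality $X^\sigma\cap\cO_\tau = X'\cap\cO_\tau$, but your argument for it leans on the claim that the isomorphism $\init_w X \cong \bG_{m,k}^{\dim\tau}\times_k(X^\Delta\cap\cO_\tau)$ from the Fact is \emph{compatible with the torsor structure} $T \to \cO_\tau$, i.e.\ that it identifies $\init_w X$ with $\pi^{-1}(X^\Delta\cap\cO_\tau)$ on the nose. That stronger statement is not what the paper's Fact asserts (it only gives an abstract isomorphism), and the paper deliberately does not use it here. Instead, the paper compares $\bG_{m,k}^{\dim\tau}\times_k(X'\cap\cO_\tau)$ and $\bG_{m,k}^{\dim\tau}\times_k(X^\sigma\cap\cO_\tau)$ only up to abstract isomorphism, deduces a dimension-preserving bijection between irreducible components, and concludes that the closed immersion $X^\sigma\cap\cO_\tau\hookrightarrow X'\cap\cO_\tau$ is surjective --- exactly the statement of the proposition and nothing more. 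The scheme-theoretic equality $X^\sigma = X'$ is only established afterward (in \autoref*{definingfunctionscandidatereduced} and the corollary that follows it), once $X'$ is shown to be Cohen--Macaulay and reduced. You should therefore drop the equality claim and run the component-counting argument, which requires no compatibility hypothesis on the Fact.
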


\begin{proof}
Let $w \in \relint(\tau)$. Because $\sigma \in \overline{\Delta}$ and $\overline{\Delta}$ contains a tropical fan for $X \hookrightarrow T$, we have that
\[
	\bG_{m,k}^{\dim \tau} \times_k (X^\sigma \cap \cO_\tau) \cong \init_w X.
\]
Let $Y(\tau) = \Spec(k[\tau^\vee \cap M]) \subset Y(\sigma)$ be the affine $T$-toric variety defined by $\tau$. Note that $\cO_\tau = \Spec(k[\tau^\perp \cap M])$, and the inclusion
\[
	\cO_\tau \hookrightarrow Y(\tau)
\]
is given by the map $\psi_\tau: k[\tau^\vee \cap M] \to k[\tau^\perp \cap M]$. Thus by construction, $X' \cap \cO_\tau$ is the closed subscheme of $\cO_\tau$ defined by the ideal in $k[\tau^\perp \cap M]$ generated by
\[
	\psi_\tau(\chi^{u_1}f_1), \dots, \psi_\tau(\chi^{u_{n-d}}f_{n-d}).
\]
Let $T \to \cO_\tau$ be the morphism defined by the inclusion $k[\tau^\perp \cap M] \hookrightarrow k[M]$. Then the preimage of $X' \cap \cO_\tau$ under the morphism $T \to \cO_\tau$ is defined by the ideal in $k[M]$ generated by
\[
	\varphi_\tau(\chi^{u_1}f_1), \dots, \varphi_\tau(\chi^{u_{n-d}}f_{n-d}).
\]
By \autoref*{boundaryinitial}, for each $i \in \{1, \dots, n-d\}$,
\[
	\varphi_\tau(\chi^{u_i}f_i) = \chi^{u_i}\init_w f_i.
\]
Then because $\init_w f_1, \dots, \init_w f_{n-d}$ generate the ideal defining $\init_w X$ in $T$, we have that the preimage of $X' \cap \cO_\tau$ under the morphism $T \to \cO_\tau$ is equal to $\init_w X$. Because the morphism $T \to \cO_\tau$ is isomorphic to the projection morphism $\bG_{m,k}^{\dim \tau} \times_k \cO_\tau \to \cO_\tau$, this implies
\[
	\bG_{m,k}^{\dim \tau} \times_k (X' \cap \cO_\tau) \cong \init_w X \cong \bG_{m,k}^{\dim \tau} \times_k (X^\sigma \cap \cO_\tau).
\]
Therefore there exists a dimension preserving bijection between the irreducible components of $X' \cap \cO_\tau$ and the irreducible components of $X^\sigma \cap \cO_\tau$, and this implies that that the closed immersion $X^\sigma \cap \cO_\tau \hookrightarrow X' \cap \cO_\tau$ is surjective.
\end{proof}

We now get the following corollary.

\begin{corollary}
\label{definingfunctionscandidatesurjective}
The closed immersion $X^\sigma \hookrightarrow X'$ is surjective.
\end{corollary}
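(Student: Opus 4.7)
The plan is to deduce the corollary from \autoref*{surjectiveoneachorbit} by a standard stratification argument, so the work is essentially set-theoretic. Since surjectivity of a morphism of schemes is a statement about underlying topological spaces, I only need to verify that the underlying set of $X^\sigma$ surjects onto the underlying set of $X'$.

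First, I would recall that the affine toric variety $Y(\sigma)$ decomposes as a disjoint union of torus orbits
\[
    Y(\sigma) = \bigsqcup_{\tau \preceq \sigma} \cO_\tau,
\]
indexed by the faces $\tau$ of $\sigma$. Since $X^\sigma$ and $X'$ are both closed subschemes of $Y(\sigma)$, their underlying sets likewise decompose as the disjoint unions
\[
    X^\sigma = \bigsqcup_{\tau \preceq \sigma} (X^\sigma \cap \cO_\tau), \qquad X' = \bigsqcup_{\tau \preceq \sigma} (X' \cap \cO_\tau).
\]

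Now I would invoke \autoref*{surjectiveoneachorbit}, which says that for every face $\tau$ of $\sigma$ the closed immersion $X^\sigma \cap \cO_\tau \hookrightarrow X' \cap \cO_\tau$ is surjective. Taking the disjoint union over all faces $\tau \preceq \sigma$ therefore yields that the closed immersion $X^\sigma \hookrightarrow X'$ is surjective on underlying sets, which is exactly what surjectivity of a morphism of schemes means.

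There is no real obstacle here: the content is entirely packaged in \autoref*{surjectiveoneachorbit} (which already uses the tropical fan hypothesis, the initial-degeneration identifications, and \autoref*{boundaryinitial} to match $X^\sigma \cap \cO_\tau$ with $X' \cap \cO_\tau$ via $\bG_{m,k}^{\dim\tau}$-factors). The only subtlety worth mentioning is to justify that both $X^\sigma$ and $X'$ genuinely decompose via the orbit stratification of $Y(\sigma)$; this is automatic for $X'$ because $X' \subset Y(\sigma)$ is a closed subscheme and $Y(\sigma) = \bigsqcup_\tau \cO_\tau$, and likewise for $X^\sigma$.
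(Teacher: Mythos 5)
Your proposal is correct and matches the paper's argument exactly: the paper's proof also deduces the corollary directly from \autoref*{surjectiveoneachorbit} together with the torus orbit stratification of $Y(\sigma)$. You have simply spelled out the set-theoretic details that the paper leaves implicit.
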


\begin{proof}
This is a direct consequence of \autoref*{surjectiveoneachorbit} and the fact that the torus orbits stratify $Y(\sigma)$.
\end{proof}

We can now prove the following.

\begin{proposition}
\label{definingfunctionscandidatereduced}
The scheme $X'$ is reduced.
\end{proposition}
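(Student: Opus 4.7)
My plan is to show that $X'$ is reduced by establishing that it is Cohen--Macaulay and generically reduced, which together suffice since any Cohen--Macaulay scheme has no embedded associated points.

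First I would handle the Cohen--Macaulay property via a dimension count. Since $\overline{\Delta}$ is unimodular and $\sigma \in \overline{\Delta}$, the affine toric variety $Y(\sigma)$ is smooth of dimension $n$. The scheme $X' \subset Y(\sigma)$ is cut out by the $n-d$ functions $\chi^{u_1}f_1, \dots, \chi^{u_{n-d}}f_{n-d}$, so by Krull's Hauptidealsatz every irreducible component of $X'$ has codimension at most $n-d$ in $Y(\sigma)$, hence dimension at least $d$. Conversely, by \autoref{definingfunctionscandidatesurjective}, $X'$ is set-theoretically supported on $X^\sigma$, which as the closure of the pure $d$-dimensional scheme $X$ in $Y(\sigma)$ has dimension $d$. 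So every irreducible component of $X'$ has dimension exactly $d$. Since $Y(\sigma)$ is Cohen--Macaulay and $X'$ is locally cut out by the expected number $n-d$ of equations, the defining functions form a regular sequence on $Y(\sigma)$, and $X'$ is a Cohen--Macaulay complete intersection.

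Next I would check generic reducedness. Since $X'$ is Cohen--Macaulay, it has no embedded associated points, so reducedness is equivalent to the local rings at the generic points of the irreducible components being fields. The topological space of $X'$ coincides with that of $X^\sigma$, and because $X$ is dense in $X^\sigma$, every irreducible component of $X^\sigma$ is the closure of an irreducible component of $X$; in particular, every generic point of $X'$ lies in the open torus $T \subset Y(\sigma)$. On $T$, each $\chi^{u_i}$ is a unit, so the ideal generated by $\chi^{u_1}f_1, \dots, \chi^{u_{n-d}}f_{n-d}$ in $k[M]$ coincides with the ideal generated by $f_1, \dots, f_{n-d}$, giving $X' \cap T = X$. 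Since $X$ is reduced by hypothesis (i), the local ring of $X'$ at each of its generic points is a field. Combined with Cohen--Macaulayness, $X'$ is reduced.

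The main obstacle is the dimension-theoretic step: pinning down that every component of $X'$ has dimension exactly $d$, which is what upgrades the defining functions to a regular sequence and thereby opens the door to the CM + generically reduced argument. This relies crucially on the surjectivity established in \autoref{definingfunctionscandidatesurjective} to control the dimension of $X'$ from above; everything else is a routine application of standard commutative algebra.
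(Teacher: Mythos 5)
Your proposal is correct and follows essentially the same route as the paper: purity of dimension $d$ via \autoref{definingfunctionscandidatesurjective}, Cohen--Macaulayness from the complete-intersection presentation in the smooth toric chart $Y(\sigma)$, and generic reducedness from $X' \cap T = X$ with $X$ dense, combining to give reducedness. You spell out a couple of steps the paper leaves implicit (that generic points lie in $T$, and the ``CM plus generically reduced implies reduced'' principle), but the argument is the same.
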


\begin{proof}
Because $X$ has pure dimension $d$, we have that $X^\sigma$ has pure dimension $d$. Thus by \autoref*{definingfunctionscandidatesurjective}, $X'$ has pure dimension $d$. Because $X'$ is defined in $Y(\sigma)$ by $n-d$ functions, this implies that $X'$ is Cohen-Macaulay. By construction, $X' \cap T = X$, and by \autoref*{definingfunctionscandidatesurjective}, $X$ is dense in $X'$. Then because $X$ is reduced, $X'$ is generically reduced and therefore reduced.
\end{proof}

The following corollary completes the proof of \autoref*{tcideal}.

\begin{corollary}
The closed immersion $X^\sigma \hookrightarrow X'$ is an isomorphism.
\end{corollary}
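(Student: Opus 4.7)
The plan is very short: the two preceding results have already done all of the real work, and what remains is a standard fact about reduced schemes.

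First I would recall what has been established. By \autoref{definingfunctionscandidatesurjective}, the closed immersion $X^\sigma \hookrightarrow X'$ is surjective on underlying topological spaces. By \autoref{definingfunctionscandidatereduced}, the scheme $X'$ is reduced.

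Now I would invoke the general principle that a surjective closed immersion into a reduced scheme is an isomorphism. Concretely, $X^\sigma \hookrightarrow X'$ corresponds to a quasi-coherent ideal sheaf $\mathcal{I} \subset \mathcal{O}_{X'}$, and topological surjectivity says $V(\mathcal{I}) = X'$, so every section of $\mathcal{I}$ is nilpotent. Since $X'$ is reduced, $\mathcal{I} = 0$, which is to say the closed immersion is an isomorphism. This finishes the proof.

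There is no real obstacle here; the substantive content was already packaged into \autoref{definingfunctionscandidatesurjective} and \autoref{definingfunctionscandidatereduced}. The only thing to be careful about is to state explicitly (or cite) the elementary fact that a surjective closed immersion into a reduced scheme is an isomorphism, so that the reader sees where reducedness of $X'$ is used.
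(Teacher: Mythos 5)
Your proof is correct and matches the paper's own argument, which simply cites \autoref{definingfunctionscandidatesurjective} and \autoref{definingfunctionscandidatereduced}. You have merely spelled out the standard fact that a surjective closed immersion into a reduced scheme is an isomorphism, which the paper leaves implicit.
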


\begin{proof}
This follows directly from \autoref*{definingfunctionscandidatesurjective} and \autoref*{definingfunctionscandidatereduced}.
\end{proof}

\section{Tropical compactifications and group actions}
\label{tropicalcompactificationsandgroupactions}

Let $G$ be a finite group. In this section we will prove that if we equip certain tropical compactifications with certain $G$-actions, then their classes in the $G$-equivariant Grothendieck ring of varieties can be related to classes of initial degenerations.

Let $G \to T$ be an algebraic group homomorphism, where $G$ is considered as a $k$-scheme in the standard way, and endow $T$ with the $G$-action induced by $G \to T$ and the action of $T$ on itself given by left multiplication. 

We begin with the following elementary lemma.

\begin{lemma}
\label{torusequivariantclasstrivial}
We have that
\[
	[T, G] = (\bL - 1)^n \in K_0^G(\Var_k).
\]
\end{lemma}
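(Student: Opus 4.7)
The plan is to prove this by induction on $n$, using the affine bundle relation to trade the nontrivial translation action on $T$ for a trivial action on an ambient affine space.

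First, extend the $G$-action on $T = \bG_{m,k}^n$ to $\bA_k^n = \Spec(k[x_1, \dots, x_n])$ via the same composition $G \to T$ followed by the standard action of $T$ on $\bA_k^n$ by coordinate-wise multiplication. This is a linear, hence affine, $G$-action on $\bA_k^n$, so $\bA_k^n$ with this action is a $G$-equivariant affine bundle of rank $n$ over $\Spec(k)$ (with trivial $G$-action on the base). The same is true of $\bA_k^n$ with trivial $G$-action. By the affine bundle relation in $K_0^G(\Var_k)$, I conclude
\[
	[\bA_k^n, G] = \bL^n.
\]

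Second, the coordinate hyperplanes $\{x_i = 0\}$ are $G$-invariant, since $G$ acts diagonally. For each $S \subseteq \{1, \dots, n\}$, set $V_S = \{p \in \bA_k^n \mid x_i(p) = 0 \iff i \in S\}$, a $G$-invariant locally closed subscheme. Projection onto the complementary coordinates gives an isomorphism $V_S \cong \bG_{m,k}^{n - |S|}$, and the restricted $G$-action on $V_S$ is again of the form considered in the lemma, coming from the composition $G \to T \to \bG_{m,k}^{n-|S|}$. In particular, $V_{\emptyset} = T$.

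Third, the base case $n = 0$ is immediate since $T = \Spec(k)$. For the inductive step, cut-and-paste gives
\[
	\bL^n = [\bA_k^n, G] = \sum_{S \subseteq \{1, \dots, n\}} [V_S, G] = [T, G] + \sum_{\emptyset \neq S \subseteq \{1, \dots, n\}} (\bL - 1)^{n - |S|},
\]
where the last equality uses the inductive hypothesis applied to each proper $V_S$. Expanding $\bL^n = ((\bL - 1) + 1)^n$ by the binomial theorem and comparing gives $[T, G] = (\bL - 1)^n$, as required. There is no significant obstacle here; the only step that uses more than the basic cut-and-paste relations is the identification $[\bA_k^n, G] = \bL^n$, which is precisely where the linearity of the $G$-action and the affine bundle relation are essential.
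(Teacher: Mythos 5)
Your proof is correct, but it takes a genuinely different route from the paper's. Both arguments begin the same way: extend the $G$-action to a linear (hence affine) action on $\bA_k^n$ (the paper on $\bA_k^1$) and apply the affine-bundle relation to get $[\bA_k^n, G] = \bL^n$. The paper then short-circuits the rest: it first handles $n = 1$ by cut-and-paste, $[T, G] = [\bA_k^1, G] - [\{0\}, G] = \bL - 1$, and then observes that for general $n$ the $G$-action on $T \cong \bG_{m,k}^n$ is the diagonal action built from the $n$ compositions $G \to T \to \bG_{m,k}$, so multiplicativity of $[\cdot, G]$ under products gives $(\bL-1)^n$ immediately. You instead stratify $\bA_k^n$ into the $2^n$ torus orbits $V_S$, observe each is $G$-equivariantly a smaller torus of the same type, and run an induction together with a binomial identity. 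Both are valid; the paper's version is shorter because the multiplicativity of the ring structure on $K_0^G(\Var_k)$ absorbs all the combinatorics that your binomial expansion handles by hand, whereas your version has the mild advantage of needing only the $n = 0$ base case rather than a direct computation at $n = 1$. One small presentational point: as stated, the lemma has $T$ fixed with $M \cong \Z^n$ for a specific $n$, so your induction is really proving the more general statement that $[\bG_{m,k}^m, G'] = (\bL-1)^m$ for every $m \ge 0$ and every homomorphism $G' \to \bG_{m,k}^m$; this is implicit in your appeal to the inductive hypothesis on the $V_S$ and is worth flagging explicitly.
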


\begin{proof}
First consider the case where $n=1$. Then the action of $T$ on itself extends to a linear action on $\bA_k^1 \supset T$, so the $G$-action on $T$ extends to a linear action on $\bA_k^1$. Thus
\[
	[T, G] = [\bA_k^1, G] - 1 = \bL - 1 \in K_0^G(\Var_k).
\]
Now consider the case where $n$ is arbitrary, and fix an isomorphism $T \cong \bG_{m,k}^n$. On each $\bG_{m,k}$ factor of $\bG_{m,k}^n$, we have a $G$-action on $\bG_{m,k}$ induced by the composition $G \to T \to \bG_{m,k}$, where the second map is the projection onto the given factor. Then the action of $G$ on $T$ is the diagonal action induced by these actions. Thus by the case above
\[
	[T, G] = (\bL -1)^n \in K_0^G(\Var_k).
\]
\end{proof}

Let $d \in \Z_{>0}$, let $f_1, \dots, f_m \in k[M]$, and let $\Delta$ be a fan in $N$. Let $X \hookrightarrow T$ be the closed subscheme defined by the ideal generated by $f_1, \dots, f_m$, and suppose that the following hypotheses hold.

\begin{enumerate}[(i)]

\item $X$ is pure dimension $d$ and reduced.

\item $\Delta$ is unimodular.

\item $\Delta$ is a tropical fan for $X \hookrightarrow T$.

\item For each $\sigma \in \Delta$, each pair $w_1, w_2 \in \relint(\sigma)$, and each $i \in \{1, \dots, m\}$,
\[
	\init_{w_1} f_i = \init_{w_2} f_i.
\]

\item For each $\sigma \in \Delta$, there exist $i_1, \dots i_{n-d} \in \{1, \dots, m\}$ such that $f_{i_1}, \dots, f_{i_{n-d}}$ generate the ideal defining $X$ in $T$ and $\init_w f_{i_1}, \dots, \init_w f_{i_{n-d}}$ generate the ideal defining $\init_w X$ in $T$ for all $w \in \sigma$.

\end{enumerate}

Let $Y(\Delta)$ be the $T$-toric variety defined by $\Delta$, and endow $Y(\Delta)$ with the $G$-action induced by $G \to T$ and the action of $T$ on $Y(\Delta)$. For each $\sigma \in \Delta$, choose some $w_\sigma \in \relint(\sigma)$. Let $X^\Delta \hookrightarrow Y(\Delta)$ be the closure of $X$. 

\begin{proposition}
\label{tropcompequivariantclass}
Suppose that $X$ is invariant under the $G$-action on $T$. Then
\begin{enumerate}[(a)]

\item $X^\Delta$ is invariant under the $G$-action on $Y(\Delta)$,

\item $\init_{w_\sigma} X$ is invariant under the $G$-action on $T$ for each $\sigma \in \Delta$,

\item and

\[
	(\bL-1)^{\dim \Delta} [X^\Delta, G] = \sum_{\sigma \in \Delta} (\bL-1)^{\dim \Delta - \dim \sigma}[\init_{w_\sigma} X, G] \in K_0^G(\Var_k),
\]
where $X^\Delta$ and each $\init_{w_\sigma} X$ are endowed with the $G$-actions induced by restriction of the $G$-actions on $Y(\Delta)$ and $T$, respectively.

\end{enumerate}
\end{proposition}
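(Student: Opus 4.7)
Parts (a) and (b) can be dispatched directly. For (a), since $G$ acts on $Y(\Delta)$ by automorphisms and $X \subset T$ is $G$-invariant, $g(X^\Delta) = g(\overline{X}) = \overline{g(X)} = X^\Delta$. For (b), letting $t_g \in T$ denote the image of $g \in G$, the action on characters is $g^*\chi^u = \chi^u(t_g)\,\chi^u$, so for any $f \in k[M]$ we have $\supp(g^*f) = \supp(f)$, which implies that $\init_w$ commutes with the $G$-action. Then the $G$-invariance of the ideal of $X$ yields the $G$-invariance of the initial ideal defining $\init_{w_\sigma} X$ in $T$.

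For (c), the plan is to stratify $X^\Delta = \bigsqcup_{\sigma \in \Delta}(X^\Delta \cap \cO_\sigma)$ into $G$-invariant locally closed subschemes (each torus orbit $\cO_\sigma$ being automatically $G$-invariant), so that
\[
[X^\Delta, G] = \sum_{\sigma \in \Delta} [X^\Delta \cap \cO_\sigma, G]
\]
in $K_0^G(\Var_k)$. After multiplying by $(\bL-1)^{\dim \Delta}$, the desired identity reduces to the claim
\[
[\init_{w_\sigma} X, G] = (\bL-1)^{\dim \sigma}\,[X^\Delta \cap \cO_\sigma, G] \quad \text{for each } \sigma \in \Delta.
\]

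The main work is to construct a $G$-equivariant isomorphism
\[
\init_{w_\sigma} X \cong T_\sigma \times_k (X^\Delta \cap \cO_\sigma),
\]
where $T_\sigma \subset T$ is the sub-torus of dimension $\dim \sigma$ with character lattice $M/(\sigma^\perp \cap M)$, equipped with the $G$-action induced from $G \to T \to T_\sigma$. Unimodularity of $\sigma$ provides a splitting $T \cong T_\sigma \times T^\sigma$, and $T^\sigma := \Spec k[\sigma^\perp \cap M]$ is canonically identified with $\cO_\sigma$. By hypothesis (v), the support of each $\init_{w_\sigma} f_i$ lies in a single coset of $\sigma^\perp \cap M$, so we can write $\init_{w_\sigma} f_i = \chi^{-u_i} h_i$ for a unit $\chi^{-u_i} \in k[M]$ and some $h_i \in k[\sigma^\perp \cap M]$. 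The argument in the proof of \autoref*{tcideal} (using hypothesis (vi)) identifies $X^\Delta \cap \cO_\sigma$ with the zero locus of the $h_i$ in $T^\sigma$, while $\init_{w_\sigma} X$ is the zero locus of the $h_i$ in $T$, which factors as $T_\sigma \times_k (X^\Delta \cap \cO_\sigma)$ since the $h_i$ involve only the $T^\sigma$-coordinates. Since $G$ acts on $T = T_\sigma \times T^\sigma$ by translations, this isomorphism is $G$-equivariant with diagonal $G$-action on the right. Combining with $[T_\sigma, G] = (\bL-1)^{\dim \sigma}$ from \autoref*{torusequivariantclasstrivial} and the multiplicativity of equivariant classes under diagonal actions yields the displayed claim.

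The principal obstacle is setting up this $G$-equivariant product decomposition cleanly, which hinges on the unimodular splitting of $T$ being automatically $G$-stable and on hypothesis (v) allowing the initial forms to be absorbed into units times functions on $T^\sigma$; once this decomposition is in place, the remaining steps are routine manipulations in the equivariant Grothendieck ring.
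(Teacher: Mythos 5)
Your proposal is correct and follows essentially the same route as the paper: stratify $X^\Delta$ by the $G$-invariant strata $X^\Delta \cap \cO_\sigma$, produce a $G$-equivariant scheme-theoretic identification $\init_{w_\sigma} X \cong T_\sigma \times_k (X^\Delta \cap \cO_\sigma)$ from the explicit generators furnished by \autoref{tcideal}, and invoke $[T_\sigma, G] = (\bL-1)^{\dim\sigma}$. One small improvement on your side: for part (b) you argue directly that since $G$ acts on $k[M]$ by rescaling each character (so $\supp(g^*f) = \supp(f)$), taking $\init_w$ commutes with the $G$-action and invariance of the ideal is immediate; the paper instead extracts (b) as a by-product of the product decomposition in (c) (\autoref{writinginitialformasproduct}, \autoref{XDeltastratuminvariant}, \autoref{twoactionsfororbitsarethesame}). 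Your version is cleaner for (b), though the machinery is needed anyway for (c). One minor slip: the fact that $\supp(\init_{w_\sigma}f_i)$ lies in a single coset of $\sigma^\perp \cap M$ follows from hypothesis (iv) (constancy of initial forms on $\relint(\sigma)$), not hypothesis (v) as you wrote; this is exactly what \autoref{boundaryinitial} formalizes. Also, the splitting $T \cong T_\sigma \times_k \cO_\sigma$ requires only that $\sigma^\perp \cap M$ be a saturated sublattice of $M$ (automatic), not unimodularity of $\sigma$, but this does no harm.
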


\begin{remark}
Note that each $G$-orbit of $Y(\Delta)$ is contained in a $T$-invariant open affine, so the $G$-action on $Y(\Delta)$ is good. The $G$-action on $T$ is good because $T$ is affine. Therefore the classes in the statement of \autoref*{tropcompequivariantclass} are well defined.
\end{remark}

\subsection{Proof of \autoref*{tropcompequivariantclass}}

Suppose that $X$ is invariant under the $G$-action on $T$. We begin by proving the first part of \autoref*{tropcompequivariantclass}.

\begin{proposition}
\label{XDeltainvariant}
We have that $X^\Delta$ is invariant under the $G$-action on $Y(\Delta)$.
\end{proposition}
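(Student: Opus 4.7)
The plan is to deduce invariance of $X^\Delta$ from invariance of $X$ by a standard density-and-separatedness argument, using that $X^\Delta$ is by definition the scheme-theoretic closure of $X$ in $Y(\Delta)$ and that $X$ is open and dense in $X^\Delta$.

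First I would set up the morphism $\alpha \colon G \times_k Y(\Delta) \to Y(\Delta)$ encoding the $G$-action, and restrict it to obtain $\alpha' \colon G \times_k X^\Delta \to Y(\Delta)$. The goal is to show that $\alpha'$ factors through the closed subscheme $X^\Delta \hookrightarrow Y(\Delta)$; once this is done, the resulting morphism $G \times_k X^\Delta \to X^\Delta$ automatically satisfies the action axioms, because these axioms already hold on the dense open subscheme $G \times_k X$ and $X^\Delta$ is separated.

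To prove the factorization, I would argue as follows. The preimage $Z := (\alpha')^{-1}(X^\Delta) \subseteq G \times_k X^\Delta$ is a closed subscheme, since $X^\Delta$ is closed in the separated $k$-scheme $Y(\Delta)$. By the hypothesis that $X$ is $G$-invariant in $T$, the restriction of $\alpha$ to $G \times_k X$ factors through $X \subseteq X^\Delta$, so $G \times_k X \subseteq Z$. Now $X$ is the intersection of $X^\Delta$ with the open dense torus $T \subseteq Y(\Delta)$, so $X$ is open and dense in $X^\Delta$; since $G$ is flat of finite type, $G \times_k X$ is open and dense in $G \times_k X^\Delta$. A closed subscheme of $G \times_k X^\Delta$ containing an open dense subscheme must equal $G \times_k X^\Delta$, so $Z = G \times_k X^\Delta$ as required.

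The argument is essentially formal, so no serious obstacle is expected; the only point that must be handled with a little care is that the closure $X^\Delta$ is taken scheme-theoretically, so one should note that $X$ is reduced and $X^\Delta$ is therefore the reduced closure, making the set-theoretic density statement also a scheme-theoretic one in the way needed above.
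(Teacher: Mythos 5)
Your proof is correct and takes essentially the same approach as the paper: use $G$-invariance of $X$, density of $X$ in $X^\Delta$, and reducedness of $X^\Delta$ (inherited from $X$) to upgrade set-theoretic invariance of the closure to scheme-theoretic invariance. The paper's version is terser (it phrases the first step via individual automorphisms of $Y(\Delta)$ preserving the closure, rather than via the action morphism), but the underlying ideas coincide.
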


\begin{proof}
Because $X^\Delta$ is the closure of $X$, as a set $X^\Delta$ is invariant under the $G$-action on $Y(\Delta)$. Because $X$ is reduced, so is $X^\Delta$, and therefore as a closed subscheme $X^\Delta$ is invariant under the $G$-action on $Y(\Delta)$.
\end{proof}

For each $\sigma \in \Delta$, let $\cO_\sigma$ be torus orbit of $Y(\Delta)$ corresponding to $\sigma$.

\begin{proposition}
\label{XDeltastratuminvariant}
For each $\sigma \in \Delta$, the scheme theoretic intersection $X^\Delta \cap \cO_\sigma$ is invariant under the $G$-action on $Y(\Delta)$.
\end{proposition}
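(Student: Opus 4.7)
The plan is to reduce to the affine $T$-invariant chart of $Y(\Delta)$ in which $\cO_\sigma$ sits as a closed subscheme, and then argue that a scheme-theoretic intersection of two $G$-invariant closed subschemes of an affine $G$-scheme is automatically $G$-invariant.

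First I would introduce the affine $T$-toric subvariety $Y(\sigma) = \Spec(k[\sigma^\vee \cap M]) \subset Y(\Delta)$ associated to $\sigma$. This is $T$-stable, and $\cO_\sigma$ is the closed subscheme of $Y(\sigma)$ cut out by the ideal generated by $\{\chi^u \mid u \in (\sigma^\vee \setminus \sigma^\perp) \cap M\}$. Since each character $\chi^u$ is a $T$-semi-invariant, that ideal is $T$-stable, so $\cO_\sigma$ is $T$-invariant as a closed subscheme of $Y(\sigma)$.

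Next, because the $G$-action on $Y(\Delta)$ is induced by the homomorphism $G \to T$ and the $T$-action, every $T$-invariant (closed or locally closed) subscheme of $Y(\Delta)$ is automatically $G$-invariant. In particular both $Y(\sigma)$ and its closed subscheme $\cO_\sigma$ are $G$-invariant. Combined with \autoref*{XDeltainvariant}, which asserts $G$-invariance of $X^\Delta$, I would conclude that $X^\Delta \cap Y(\sigma)$ is a $G$-invariant closed subscheme of $Y(\sigma)$.

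Finally, the scheme-theoretic intersection $X^\Delta \cap \cO_\sigma$ may be computed as $(X^\Delta \cap Y(\sigma)) \cap \cO_\sigma$ inside the affine $G$-scheme $Y(\sigma)$; its defining ideal in $k[\sigma^\vee \cap M]$ is the sum of the two $G$-stable defining ideals, hence is itself $G$-stable. This yields the desired $G$-invariance of $X^\Delta \cap \cO_\sigma$. I do not anticipate any real obstacle: the statement is essentially a formal consequence of \autoref*{XDeltainvariant} together with the fact that the $G$-action on $Y(\Delta)$ factors through the $T$-action.
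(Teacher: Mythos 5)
Your proposal is correct and follows the same route as the paper: the paper's proof simply cites Proposition \ref{XDeltainvariant} together with the $G$-invariance of each torus orbit (which, as you note, holds because the $G$-action factors through $T$), and you have just spelled out the straightforward scheme-theoretic details of why these two facts imply the claim.
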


\begin{proof}
This follows from \autoref*{XDeltainvariant} and the fact that each torus orbit of $Y(\Delta)$ is invariant under the $G$-action.
\end{proof}

For each $\sigma \in \Delta$, let $T \to \cO_\sigma = \Spec(k[\sigma^\perp \cap M])$ be the algebraic group homomorphism given by the inclusion $k[\sigma^\perp \cap M] \to k[M]$, let $T_\sigma$ be the kernel of $T \to \cO_\sigma$, and fix an identification of $T$ with $T_\sigma \times_k \cO_\sigma$ given by a splitting of the sequence $0 \to T_\sigma \to T \to \cO_\sigma \to 0$.

\begin{remark}
\label{twoactionsfororbitsarethesame}
Note that the $G$-action on $\cO_\sigma$ induced by restriction of the $G$-action on $Y(\Delta)$ is the same as the $G$-action induced by $G \to T \to \cO_\sigma$ and the action of $\cO_\sigma$ on itself given by left multiplication.
\end{remark}

\begin{proposition}
\label{writinginitialformasproduct}
Let $\sigma \in \Delta$. Then under the identification of $T$ with $T_\sigma \times_k \cO_\sigma$, the initial degeneration $\init_{w_\sigma} X$ is equal to $T_\sigma \times_k (X^\Delta \cap \cO_\sigma)$.
\end{proposition}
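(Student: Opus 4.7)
The plan is to identify defining equations for both sides as ideals in $k[M]$, using \autoref*{tcideal} to describe the closure $X^\sigma$ inside the affine toric chart $Y(\sigma) = \Spec(k[\sigma^\vee \cap M])$, and using \autoref*{boundaryinitial} to translate $\varphi_\sigma$ into the initial-form operator.

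First I would apply hypothesis (v) to select indices $i_1, \dots, i_{n-d}$ such that $f_{i_1}, \dots, f_{i_{n-d}}$ generate the ideal defining $X$ in $T$ and such that their initial forms $\init_w f_{i_j}$ generate the ideal defining $\init_w X$ in $T$ for every $w \in \sigma$. This subfamily satisfies the hypotheses of \autoref*{tcideal} with $\overline{\Delta} = \Delta$, since hypothesis (iv) of the current setup matches hypothesis (v) of \autoref*{tcideal} and the remaining conditions come for free. Picking $u_j \in M$ with $-u_j \in \supp(\init_{w_\sigma} f_{i_j})$, \autoref*{tcideal} then tells us that $\chi^{u_j} f_{i_j} \in k[\sigma^\vee \cap M]$ and that these $n-d$ elements generate the ideal defining $X^\sigma$ in $Y(\sigma)$.

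Next I would interpret $X^\Delta \cap \cO_\sigma$ in terms of this data. Since $\cO_\sigma = \Spec(k[\sigma^\perp \cap M])$ is cut out inside $Y(\sigma)$ by the kernel of $\psi_\sigma$, the ideal defining $X^\Delta \cap \cO_\sigma = X^\sigma \cap \cO_\sigma$ inside $\cO_\sigma$ is generated by the images $\psi_\sigma(\chi^{u_j} f_{i_j})$. The chosen splitting of $0 \to T_\sigma \to T \to \cO_\sigma \to 0$ identifies the surjection $T \to \cO_\sigma$ with the ring inclusion $k[\sigma^\perp \cap M] \hookrightarrow k[M]$, so under the resulting identification $T \cong T_\sigma \times_k \cO_\sigma$, the scheme $T_\sigma \times_k (X^\Delta \cap \cO_\sigma)$ is the preimage of $X^\Delta \cap \cO_\sigma$ under $T \to \cO_\sigma$, and its ideal in $k[M]$ is generated by $\varphi_\sigma(\chi^{u_j} f_{i_j})$, since $\varphi_\sigma$ is by definition the composition of $\psi_\sigma$ with that inclusion.

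Finally I would apply \autoref*{boundaryinitial} with $\tau = \sigma$ and $w = w_\sigma$, whose hypotheses follow from the current (iv), to rewrite $\varphi_\sigma(\chi^{u_j} f_{i_j}) = \chi^{u_j} \init_{w_\sigma} f_{i_j}$. Because $\chi^{u_j}$ is a unit in $k[M]$, the resulting ideal coincides with the one generated by $\init_{w_\sigma} f_{i_1}, \dots, \init_{w_\sigma} f_{i_{n-d}}$, which by our choice of generators is precisely the ideal defining $\init_{w_\sigma} X$ in $T$. This yields the desired scheme-theoretic equality. The only subtle step is confirming that the splitting-based identification really presents the preimage of a closed subscheme of $\cO_\sigma$ under $T \to \cO_\sigma$ as the product with $T_\sigma$; once this is unpacked on the level of coordinate rings, everything else is direct bookkeeping with the maps $\psi_\sigma$ and $\varphi_\sigma$.
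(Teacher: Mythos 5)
Your proposal is correct and follows essentially the same route as the paper: choose $i_1,\dots,i_{n-d}$ via hypothesis (v), apply \autoref{tcideal} to describe the ideal of $X^\sigma$ in $Y(\sigma)$, push down by $\psi_\sigma$ to get the ideal of $X^\Delta\cap\cO_\sigma$, pull back to $T$ via $\varphi_\sigma$ (which, under the chosen splitting, realizes $T_\sigma\times_k(X^\Delta\cap\cO_\sigma)$ as the preimage under $T\to\cO_\sigma$), and use \autoref{boundaryinitial} to replace $\varphi_\sigma(\chi^{u_j}f_{i_j})$ by $\chi^{u_j}\init_{w_\sigma}f_{i_j}$. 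The paper's proof is identical in structure and does not address the "splitting" point any more explicitly than you do; your bookkeeping is sound.
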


\begin{proof}
Let $i_1, \dots, i_{n-d} \in \{1, \dots, m\}$ be such that $f_{i_1}, \dots, f_{i_{n-d}}$ generate the ideal defining $X$ in $T$ and $\init_w f_{i_1}, \dots, \init_w f_{i_{n-d}}$ generate the ideal defining $\init_w X$ in $T$ for all $w \in \sigma$. Because $X$ has pure dimension $d$, we have that $f_{i_j} \neq 0$ for all $j \in \{1, \dots, n-d\}$. Thus we may let $u_1, \dots, u_{n-d} \in M$ be such that
\[
	-u_j \in \supp(\init_w f_{i_j})
\]
for all $j \in \{1, \dots, n-d\}$ and all $w \in \relint(\sigma)$. Then by \autoref*{tcideal},
\[
	\chi^{u_1} f_{i_1}, \dots, \chi^{u_{n-d}} f_{i_{n-d}} \in k[\sigma^\vee \cap M],
\]
and $\chi^{u_1} f_{i_1}, \dots, \chi^{u_{n-d}} f_{i_{n-d}}$ generate the ideal defining $X^\sigma$ in $Y(\sigma)$, where $Y(\sigma) = \Spec(k[\sigma^\vee \cap M])$ is the affine $T$-toric variety defined by $\sigma$ and $X^\sigma \hookrightarrow Y(\sigma)$ is the closure of $X$. Thus $X^\Delta \cap \cO_\sigma$ is the closed subscheme of $\cO_\sigma$ defined by the ideal generated by
\[
	\psi_\sigma(\chi^{u_1} f_{i_1}), \dots, \psi_\sigma(\chi^{u_{n-d}} f_{i_{n-d}}) \in k[\sigma^\perp \cap M].
\]
Thus the preimage of $X^\Delta \cap \cO_\sigma$ under the morphism $T \to \cO_\sigma$ is the closed subscheme defined by the ideal generated by
\[
	\varphi_\sigma(\chi^{u_1} f_{i_1}), \dots, \varphi_\sigma(\chi^{u_{n-d}} f_{i_{n-d}}) \in k[M].
\]
Then by \autoref*{boundaryinitial}, the preimage of $X^\Delta \cap \cO_\sigma$ under the morphism $T \to \cO_\sigma$ is the closed subscheme defined by the ideal generated by
\[
	\init_{w_\sigma} f_{i_1}, \dots, \init_{w_\sigma} f_{i_{n-d}} \in k[M].
\]
Thus this preimage is equal to $\init_{w_\sigma} X$ in $T$. Under the identification of $T$ with $T_\sigma \times_k \cO_\sigma$, this preimage is equal to $T_\sigma \times_k (X^\Delta \cap \cO_\sigma)$, and we are done.
\end{proof}

We can now prove the next part of \autoref*{tropcompequivariantclass}.

\begin{proposition}
Let $\sigma \in \Delta$. Then $\init_{w_\sigma}X$ is invariant under the $G$-action on $T$.
\end{proposition}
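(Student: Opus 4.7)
The plan is to combine \autoref*{writinginitialformasproduct} with \autoref*{XDeltastratuminvariant} and then chase the $G$-action through the splitting $T \cong T_\sigma \times_k \cO_\sigma$. Specifically, \autoref*{writinginitialformasproduct} identifies $\init_{w_\sigma} X$ with $T_\sigma \times_k (X^\Delta \cap \cO_\sigma)$ inside $T_\sigma \times_k \cO_\sigma$, and \autoref*{XDeltastratuminvariant} tells us that $X^\Delta \cap \cO_\sigma$ is invariant under the $G$-action on $\cO_\sigma$ obtained by restricting the $G$-action on $Y(\Delta)$. So the only real content is to check that the $G$-action on $T$ respects the product decomposition and sends $T_\sigma \times_k (X^\Delta \cap \cO_\sigma)$ to itself.

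To do this, I would decompose the homomorphism $\phi: G \to T = T_\sigma \times_k \cO_\sigma$ coming from the given $G \to T$ into its two coordinate homomorphisms $\phi_1: G \to T_\sigma$ and $\phi_2: G \to \cO_\sigma$. Since the $G$-action on $T$ is left multiplication by $\phi(g)$, under the splitting it is simply the product action
\[
	g \cdot (t_1, t_2) = (\phi_1(g)\,t_1,\ \phi_2(g)\,t_2).
\]
The factor $T_\sigma \subset T_\sigma \times_k \cO_\sigma$ is obviously preserved by the first coordinate action, since $T_\sigma$ is a subgroup and $\phi_1(g) \in T_\sigma$. For the second coordinate, \autoref*{twoactionsfororbitsarethesame} identifies the restriction of the $Y(\Delta)$-action on $\cO_\sigma$ with the action through $G \to T \to \cO_\sigma$, which under the splitting is exactly left multiplication by $\phi_2(g)$; thus by \autoref*{XDeltastratuminvariant}, $\phi_2(g)$ preserves $X^\Delta \cap \cO_\sigma$ as a closed subscheme.

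Putting these two together, the product $T_\sigma \times_k (X^\Delta \cap \cO_\sigma)$ is preserved by the $G$-action on $T_\sigma \times_k \cO_\sigma = T$, and by \autoref*{writinginitialformasproduct} this set-with-scheme-structure is exactly $\init_{w_\sigma} X$. Hence $\init_{w_\sigma} X \subset T$ is $G$-invariant. I do not anticipate a genuine obstacle here; the only bookkeeping issue is to confirm that the splitting chosen to identify $T$ with $T_\sigma \times_k \cO_\sigma$ does not interfere with the argument, but since the $G$-action is by left multiplication via a group homomorphism into $T$, any choice of splitting turns it into a diagonal action, which is what the proof uses.
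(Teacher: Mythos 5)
Your proof is correct and takes the same route as the paper: it combines \autoref*{XDeltastratuminvariant}, \autoref*{twoactionsfororbitsarethesame}, and \autoref*{writinginitialformasproduct} exactly as the paper does, merely spelling out the bookkeeping with the splitting $T \cong T_\sigma \times_k \cO_\sigma$ that the paper leaves implicit.
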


\begin{proof}
This follows from \autoref*{XDeltastratuminvariant}, \autoref*{twoactionsfororbitsarethesame}, and \autoref*{writinginitialformasproduct}.
\end{proof}

For the remainder of this section, endow $X^\Delta$ with the $G$-action induced by restriction of the $G$-action of $Y(\Delta)$, and for each $\sigma \in \Delta$, endow $\init_{w_\sigma}X$ with the $G$-action induced by the restriction of the $G$-action on $T$ and endow $X^\Delta \cap \cO_\sigma$ with the $G$-action induced by the restriction of the $G$-action on $Y(\Delta)$.

\begin{proposition}
\label{actioninitialandboundarycomparison}
Let $\sigma \in \Delta$. Then
\[
	[\init_{w_\sigma}X, G] = (\bL - 1)^{\dim \sigma} [X^\Delta \cap \cO_\sigma, G] \in K_0^G(\Var_k).
\]
\end{proposition}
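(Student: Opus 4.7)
The plan is to reduce the claim to the product formula for the $G$-equivariant Grothendieck ring together with \autoref*{torusequivariantclasstrivial}, using \autoref*{writinginitialformasproduct} as the structural input.

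First, I would unpack the identification of $T$ with $T_\sigma \times_k \cO_\sigma$ provided by the chosen splitting of the exact sequence $0 \to T_\sigma \to T \to \cO_\sigma \to 0$. Under this identification, the group homomorphism $G \to T$ becomes a map $G \to T_\sigma \times_k \cO_\sigma$ of the form $g \mapsto (a(g), b(g))$, where $a : G \to T_\sigma$ and $b : G \to \cO_\sigma$ are algebraic group homomorphisms. Left multiplication by $(a(g), b(g))$ on the product $T_\sigma \times_k \cO_\sigma$ is the diagonal action coming from $a$ and $b$ individually. In particular, the $G$-action on $\cO_\sigma$ induced from this splitting agrees, via \autoref*{twoactionsfororbitsarethesame}, with the restriction of the $G$-action on $Y(\Delta)$.

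Next, I would apply \autoref*{writinginitialformasproduct}, which gives a scheme-theoretic equality
\[
\init_{w_\sigma} X \;=\; T_\sigma \times_k (X^\Delta \cap \cO_\sigma)
\]
inside $T = T_\sigma \times_k \cO_\sigma$. Since both factors on the right are $G$-stable (the first by the action of $a$ on $T_\sigma$, the second by the already established invariance of $X^\Delta \cap \cO_\sigma$), this is an equality of schemes with $G$-action where $G$ acts diagonally. Using the ring structure of $K_0^G(\Var_k)$, which encodes the diagonal action on products, I can write
\[
[\init_{w_\sigma} X, G] \;=\; [T_\sigma, G]\cdot [X^\Delta \cap \cO_\sigma, G].
\]

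Finally, I would invoke \autoref*{torusequivariantclasstrivial} applied to the torus $T_\sigma$ equipped with the $G$-action coming from $a : G \to T_\sigma$; since $\dim T_\sigma = \dim \sigma$, this gives $[T_\sigma, G] = (\bL - 1)^{\dim \sigma}$, and the proposition follows. The only subtle point is verifying that the splitting-induced $G$-action on $\cO_\sigma$ really coincides with the restriction of the $G$-action on $Y(\Delta)$, but this is exactly what \autoref*{twoactionsfororbitsarethesame} supplies, so there is no serious obstacle beyond careful bookkeeping of the decomposition of the $G$-action under the splitting.
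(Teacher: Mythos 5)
Your proof is correct and follows essentially the same route as the paper's: identify $\init_{w_\sigma}X$ with $T_\sigma \times_k (X^\Delta \cap \cO_\sigma)$ via \autoref*{writinginitialformasproduct}, match the $G$-actions using \autoref*{twoactionsfororbitsarethesame}, and evaluate $[T_\sigma, G]$ with \autoref*{torusequivariantclasstrivial}. The extra bookkeeping you supply about how $G \to T$ decomposes under the splitting is just a more explicit spelling-out of what the paper leaves implicit.
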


\begin{proof}
Let $G \to T_\sigma$ be the composition of $G\to T$ and the projection $T \to T_\sigma$ induced by the indentification of $T$ with $T_\sigma \times_k \cO_\sigma$. Endow $T_\sigma$ with the $G$-action induced by $G \to T_\sigma$ and the action of $T_\sigma$ on itself given by left multiplication. By \autoref*{twoactionsfororbitsarethesame} and \autoref*{writinginitialformasproduct},
\[
	[\init_{w_\sigma}X, G] = [T_\sigma, G] [X^\Delta \cap \cO_\sigma, G] \in K_0^G(\Var_k).
\]
Then by \autoref*{torusequivariantclasstrivial},
\[
	[\init_{w_\sigma}X, G] = (\bL - 1)^{\dim \sigma} [X^\Delta \cap \cO_\sigma, G] \in K_0^G(\Var_k).
\]
\end{proof}

The next proposition completes the proof of \autoref*{tropcompequivariantclass}.

\begin{proposition}
We have that
\[
	(\bL-1)^{\dim \Delta} [X^\Delta, G] = \sum_{\sigma \in \Delta} (\bL-1)^{\dim \Delta - \dim \sigma} [\init_{w_\sigma} X, G] \in K_0^G(\Var_k).
\]
\end{proposition}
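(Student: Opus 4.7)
The plan is straightforward, combining the toric stratification with the equivariant relation already proved in Proposition~\ref{actioninitialandboundarycomparison}.

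First I would stratify $X^\Delta$ by its intersections with the torus orbits of $Y(\Delta)$. Since $Y(\Delta) = \bigsqcup_{\sigma \in \Delta} \cO_\sigma$ is the toric stratification, and each $\cO_\sigma$ is locally closed in $Y(\Delta)$, intersecting with the closed $G$-invariant subscheme $X^\Delta$ (using \autoref{XDeltainvariant}) gives a stratification of $X^\Delta$ into locally closed $G$-invariant pieces $X^\Delta \cap \cO_\sigma$. Applying the cut-and-paste relation in $K_0^G(\Var_k)$ inductively on the closed subschemes coming from stars of cones yields
\[
    [X^\Delta, G] = \sum_{\sigma \in \Delta} [X^\Delta \cap \cO_\sigma, G].
\]

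Next, I would multiply both sides by $(\bL - 1)^{\dim \Delta}$ and split the exponent as $\dim \Delta = (\dim \Delta - \dim \sigma) + \dim \sigma$ term-by-term, obtaining
\[
    (\bL-1)^{\dim \Delta}[X^\Delta, G] = \sum_{\sigma \in \Delta} (\bL-1)^{\dim \Delta - \dim \sigma}\,(\bL-1)^{\dim \sigma}\,[X^\Delta \cap \cO_\sigma, G].
\]
Then I would substitute the identity $(\bL-1)^{\dim \sigma}[X^\Delta \cap \cO_\sigma, G] = [\init_{w_\sigma} X, G]$ from \autoref{actioninitialandboundarycomparison} into each summand, yielding exactly the claimed formula.

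There is no real obstacle here. The only point that requires any care is verifying that the cut-and-paste decomposition is valid in the $G$-equivariant Grothendieck ring, but this follows because each $X^\Delta \cap \cO_\sigma$ is $G$-invariant (\autoref{XDeltastratuminvariant}), and the usual scissor relation applies orbit-by-orbit via the filtration of $X^\Delta$ by closed subschemes $X^\Delta \cap \overline{\cO_\sigma}$ indexed by cones. All the nontrivial content, namely relating $X^\Delta \cap \cO_\sigma$ to $\init_{w_\sigma} X$ as $G$-schemes, has already been absorbed into \autoref{actioninitialandboundarycomparison}.
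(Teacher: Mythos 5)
Your proof is correct and matches the paper's argument step for step: stratify $X^\Delta$ by torus orbits to get $[X^\Delta, G] = \sum_{\sigma}[X^\Delta\cap\cO_\sigma, G]$, multiply by $(\bL-1)^{\dim\Delta}$, and substitute via \autoref{actioninitialandboundarycomparison}.
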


\begin{proof}
We have that
\[
	[X^\Delta, G] = \sum_{\sigma \in \Delta} [X^\Delta \cap \cO_\sigma, G] \in K_0^G(\Var_k),
\]
so by \autoref*{actioninitialandboundarycomparison},
\begin{align*}
	(\bL-1)^{\dim \Delta} [X^\Delta, G] &= \sum_{\sigma \in \Delta}(\bL-1)^{\dim \Delta - \dim \sigma} (\bL-1)^{\dim \sigma} [X^\Delta \cap \cO_\sigma, G]\\
	&= \sum_{\sigma \in \Delta} (\bL-1)^{\dim \Delta - \dim \sigma} [\init_{w_\sigma} X, G] \in K_0^G(\Var_k).
\end{align*}
\end{proof}

\section{Sch\"{o}n compactifications in families}
\label{schoncompactificationsinfamilies}

In this section we will show that under certain conditions, sch\"{o}n compactifications can be constructed in families.

Let $S = \Spec(A)$ be a nonempty connected smooth finite type scheme over $k$, and for each $s \in S(k)$ and $f \in A[M]$, let $f(s) \in k[M]$ denote the restriction of $f$ to $T = T \times_k \{s\} \subset T \times_k S = \Spec(A[M])$.

Let $f_1, \dots, f_m \in A[M] \setminus \{0\}$ be such that for each $i \in \{1, \dots, m\}$,
\[
	f_i = \sum_{u \in M} a^{(i)}_u \chi^u,
\]
where each $a^{(i)}_u \in A^\times \cup \{0\}$. Let $d \in \Z_{>0}$, and let $\Delta, \overline{\Delta}$ be fans in $N$.

Let $X \hookrightarrow T \times_k S$ be the closed subscheme defined by the ideal generated by $f_1, \dots, f_m$, for each $s \in S(k)$, let $X_s \hookrightarrow T$ denote the fiber of $X$ over $s$, and suppose that the following hypotheses hold.

\begin{enumerate}[(i)]

\item For all $s \in S(k)$, $X_s$ is pure dimension $d$ and sch\"{o}n in $T$.

\item $\Delta \subset \overline{\Delta}$.

\item $\overline{\Delta}$ is unimodular.

\item \label{familytropicalfanhypothesis} For all $s \in S(k)$, we have that $\Trop(X_s)$ is equal to the support of $\Delta$.

\item \label{familyinitialdegenerationsconstantinterior} For each $\sigma \in \overline{\Delta}$, each pair $w_1, w_2 \in \relint(\sigma)$, each $i \in \{1, \dots, m\}$, and each $s \in S(k)$,
\[
	\init_{w_1}(f_i(s)) = \init_{w_2}(f_i(s)).
\]

\item \label{familygrobnerbasisoneachcone}For each $\sigma \in \overline{\Delta}$, there exist $i_1, \dots, i_{n-d} \in \{1, \dots, m\}$ such that for all $s \in S(k)$, we have that $f_{i_1}(s), \dots, f_{i_{n-d}}(s) \in k[M]$ generate the ideal defining $X_s$ in $T$ and $\init_w(f_{i_1}(s)), \dots, \init_w(f_{i_{n-d}}(s))$ generate the ideal defining $\init_w (X_s)$ in $T$ for all $w \in \sigma$.

\end{enumerate}

Let $Y(\Delta), Y(\overline{\Delta})$ be the $T$-toric varieties defined by $\Delta, \overline{\Delta}$, respectively, and for each $s \in S(k)$, let $X_s^\Delta \hookrightarrow Y(\Delta)$ be the closure of $X_s$.

\begin{proposition}
\label{schoncompinfamily}
There exists a closed subscheme $X^{\overline{\Delta}} \hookrightarrow Y(\overline{\Delta}) \times_k S$ that is smooth over $S$ and such that for each $s \in S(k)$, the fiber of $X^{\overline{\Delta}}$ over $s$ is equal to $X_s^\Delta$ as a closed subscheme of $Y(\Delta) \subset Y(\overline{\Delta})$.
\end{proposition}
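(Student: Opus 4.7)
The plan is to build $X^{\overline{\Delta}}$ locally on each affine toric chart $Y(\sigma) \times_k S$ for $\sigma \in \overline{\Delta}$ by an ``over $A$'' version of \autoref*{tcideal}, and then to glue. The essential feature that lets \autoref*{tcideal} be applied uniformly in the family is the unit-or-zero hypothesis on the coefficients $a^{(i)}_u$: it guarantees that $\supp(f_i(s)) = \supp(f_i)$ is independent of $s \in S(k)$, and combining this with hypothesis (v) shows that $\supp(\init_w(f_i(s)))$ is constant as $s$ varies over $S(k)$ and as $w$ varies in $\relint(\sigma)$.

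Fix $\sigma \in \overline{\Delta}$, let $i_1, \dots, i_{n-d}$ be the indices provided by hypothesis (vi), and for each $j$ choose $u_j \in M$ with $-u_j \in \supp(\init_w(f_{i_j}(s)))$ for any $w \in \relint(\sigma)$; by the previous paragraph this condition is independent of $s$. An inspection of the proof of \autoref*{boundaryinitial} shows that the argument works verbatim over $A$, the only inputs being that the support of $f_{i_j}$ is a fixed finite subset of $M$ and that the initial forms over points of $\relint(\tau)$ have constant support for every face $\tau$ of $\sigma$. One concludes that $\chi^{u_j} f_{i_j} \in A[\sigma^\vee \cap M]$. Let $Z_\sigma \hookrightarrow Y(\sigma) \times_k S$ be the closed subscheme cut out by $\chi^{u_1} f_{i_1}, \dots, \chi^{u_{n-d}} f_{i_{n-d}}$.

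The fiber $(Z_\sigma)_s$ is defined by the restrictions $\chi^{u_j} f_{i_j}(s)$, so \autoref*{tcideal} applied to $X_s$ identifies it with $X_s^\sigma$, the closure of $X_s$ in $Y(\sigma)$. All fibers of $Z_\sigma \to S$ thus have pure dimension $d$, and since $Z_\sigma$ is cut out by $n-d$ equations in the smooth relative $n$-dimensional scheme $Y(\sigma) \times_k S$, this equidimensionality exhibits $Z_\sigma$ as a relative complete intersection, hence flat over the smooth base $S$. By \autoref*{LuxtonQufansupportedontropistropicalfan} applied fiberwise, $X_s^\Delta$ is a sch\"{o}n compactification of $X_s$, and since $\overline{\Delta}$ is unimodular, $X_s^\Delta$ and its open piece $X_s^\sigma$ are smooth over $k$. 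Combining smooth fibers with flatness over smooth $S$, the morphism $Z_\sigma \to S$ is smooth.

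It remains to glue. For $\tau$ a face of $\sigma$ in $\overline{\Delta}$ we have $Y(\tau) \subset Y(\sigma)$ open, and both $Z_\sigma|_{Y(\tau) \times_k S}$ and $Z_\tau$ are $S$-flat closed subschemes of $Y(\tau) \times_k S$ whose fibers over every $s \in S(k)$ coincide with $X_s^\tau$. Two $S$-flat closed subschemes of a common ambient space that agree on all closed fibers must be equal, so the $Z_\sigma$ glue to a closed subscheme $X^{\overline{\Delta}} \hookrightarrow Y(\overline{\Delta}) \times_k S$ with the desired fibers and with smoothness over $S$ holding locally, hence globally. The main technical obstacle is the passage from fiberwise constructions to a genuine $S$-flat global one; the unit-or-zero hypothesis on the $a^{(i)}_u$ is precisely the family-level input that resolves it, by producing defining equations $\chi^{u_j} f_{i_j} \in A[\sigma^\vee \cap M]$ whose specializations at each $s \in S(k)$ are exactly the generators needed to invoke \autoref*{tcideal}.
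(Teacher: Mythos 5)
Your proposal tracks the paper's argument very closely: same choice of local defining equations $\chi^{u_j} f_{i_j}$ via \autoref{boundaryinitial} (working over $A$ thanks to the unit-or-zero hypothesis), same reduction to \autoref{tcideal} fiberwise to identify the fibers as $X_s^\sigma$, and the same miracle-flatness argument (relative complete intersection with smooth fibers over a smooth base) for smoothness of each local model over $S$. The one place you diverge is the gluing. The paper first defines a global object $X^{\overline{\Delta}}$ by the ideal sheaf generated, on each chart $Y(\sigma) \times_k S$, by \emph{all} the $\chi^u f_i$ lying in $A[\sigma^\vee \cap M]$ — this tautologically gives a closed subscheme of your $Z_\sigma$ — and then shows the inclusion $X^{\overline{\Delta}} \cap (Y(\sigma)\times_k S) \hookrightarrow Z_\sigma$ is an equality by checking supports on closed fibers and invoking reducedness of $Z_\sigma$. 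You instead glue the $Z_\sigma$ directly, appealing to the principle that ``two $S$-flat closed subschemes of a common ambient space that agree on all closed fibers must be equal.''

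That principle, as stated, is too strong: flatness plus equality on closed fibers only gives that the two subschemes have the same underlying closed set (by the density argument: the complement of $Z \cap Z'$ in $Z$ is open, maps to an open of $S$ avoiding all closed points, hence is empty), and to promote this to scheme-theoretic equality one needs to know the $Z_\sigma$ are reduced. You do in fact have this — you've shown $Z_\sigma \to S$ is smooth and $S$ is smooth over $k$, so $Z_\sigma$ is smooth over $k$ and hence reduced — but this is exactly the ingredient the paper invokes explicitly, and your phrasing elides it. So the proof is correct once you replace the flatness principle with the reducedness argument; as written the gluing step asserts a general statement it does not justify and which is not obviously true without the reducedness you have already established. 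The paper's construction via a single global ideal sheaf has the minor advantage of giving one inclusion for free, so that only a one-sided comparison is needed.
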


\subsection{Proof of \autoref*{schoncompinfamily}} We begin our proof with the following lemma.

\begin{lemma}
Let $\sigma \in \overline{\Delta}$. For each $i \in \{1, \dots, m\}$, there exists $u^\sigma_i \in M$ such that for all $s \in S(k)$ and all $w \in \relint(\sigma)$,
\[
	-u^\sigma_i \in \supp(\init_w(f_i(s))).
\]
\end{lemma}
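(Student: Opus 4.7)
The key observation is that the hypothesis $a^{(i)}_u \in A^\times \cup \{0\}$ is extremely strong: for every $s \in S(k)$, a coefficient $a^{(i)}_u$ evaluates to $0$ at $s$ if and only if $a^{(i)}_u = 0$ in $A$, since units specialize to nonzero elements at every point of $S$. Consequently the support
\[
	\supp(f_i(s)) \;=\; \{u \in M \,:\, a^{(i)}_u \neq 0\}
\]
is independent of $s$; I will denote this common finite set by $\Sigma_i \subset M$. Note $\Sigma_i \neq \emptyset$ since $f_i \neq 0$.

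My plan is then as follows. For any $w \in N_\R$, the initial form $\init_w(f_i(s))$ is the sum of terms $a^{(i)}_u(s)\chi^u$ with $u$ ranging over those $u \in \Sigma_i$ that minimize $\langle u, w \rangle$. Call this (finite, nonempty) set of minimizers $T_i(w)$. Because each nonzero $a^{(i)}_u(s)$ is a unit times a nonzero scalar, I get $\supp(\init_w(f_i(s))) = T_i(w)$ — crucially, independent of $s$. Now hypothesis (\ref*{familyinitialdegenerationsconstantinterior}) applied to any single $s_0 \in S(k)$ (such $s_0$ exists since $S$ is nonempty) gives that $T_i(w) = T_i(w')$ for all $w, w' \in \relint(\sigma)$; call this common value $T_i^\sigma$. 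It is a nonempty subset of $\Sigma_i$.

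I then choose any $v \in T_i^\sigma$ and set $u_i^\sigma := -v$. By construction, for every $s \in S(k)$ and every $w \in \relint(\sigma)$, we have $-u_i^\sigma = v \in T_i^\sigma = \supp(\init_w(f_i(s)))$, which is what the lemma requires.

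There is no real obstacle: the whole point is that the conditions on the coefficients collapse "support" to a purely combinatorial datum living in $M$, after which the lemma is essentially automatic from the finiteness of $\Sigma_i$ and hypothesis (\ref*{familyinitialdegenerationsconstantinterior}). The only mild subtlety is confirming that $T_i^\sigma$ is nonempty, which follows from $f_i \neq 0$ together with the finiteness of $\Sigma_i$ so that the minimum is attained.
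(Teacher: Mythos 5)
Your proof is correct and follows essentially the same route as the paper: use the $a^{(i)}_u \in A^\times \cup \{0\}$ hypothesis to make $\supp(\init_w(f_i(s)))$ independent of $s$, use hypothesis (\ref*{familyinitialdegenerationsconstantinterior}) to make it independent of $w \in \relint(\sigma)$, and observe it is nonempty because $f_i \neq 0$. The only cosmetic difference is that you split the $s$-independence and $w$-independence into two explicit steps, whereas the paper states them at once; the underlying argument is identical.
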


\begin{proof}
By the fact that each $a^{(i)}_u \in A^\times \cup \{0\}$ and by hypothesis (\ref*{familyinitialdegenerationsconstantinterior}), we have that for all $i \in \{1, \dots, m\}$, all $w_1, w_2 \in \relint(\sigma)$, and all $s_1, s_2 \in S(k)$,
\[
	\supp(\init_{w_1}(f_i(s_1))) = \supp(\init_{w_2}(f_i(s_2))).
\]
Thus we only need to show that there exists $u^\sigma_i \in M$ such that
\[
	-u^\sigma_i \in \supp(\init_w(f_i(s)))
\]
for some $s \in S(k)$ and some $w \in \relint(\sigma)$.
Fix some $w \in \relint(\sigma)$, and because $S$ is nonempty, we can fix some $s \in S(k)$. Because each $f_i \neq 0$, the hypothesis on each $a^{(i)}_u$ implies that each $f_i(s) \neq 0$, and therefore each $\supp(\init_w(f_i(s))) \neq \emptyset$. The lemma follows.
\end{proof}

For the remainder of this section, for each $\sigma \in \overline{\Delta}$ and each $i \in \{1, \dots, m\}$, fix some $u^\sigma_i \in M$ such that for all $s \in S(k)$ and all $w \in \relint(\sigma)$,
\[
	-u^\sigma_i \in \supp(\init_w(f_i(s))).
\]

By hypothesis (\ref*{familygrobnerbasisoneachcone}), for each $\sigma \in \overline{\Delta}$, we may fix $i^\sigma_1, \dots, i^\sigma_{n-d} \in \{1, \dots, m\}$ such that for all $s \in S(k)$, we have that $f_{i^\sigma_1}(s), \dots, f_{i^\sigma_{n-d}}(s) \in k[M]$ generate the ideal defining $X_s$ in $T$ and $\init_w(f_{i^\sigma_1}(s)), \dots, \init_w(f_{i^\sigma_{n-d}}(s))$ generate the ideal defining $\init_w (X_s)$ in $T$ for all $w \in \sigma$.

For each $\sigma \in \overline{\Delta}$, let $Y(\sigma) = \Spec(k[\sigma^\vee \cap M])$ be the affine $T$-toric variety defined by $\sigma$, and for each $s \in S(k)$, let $X_s^\sigma \hookrightarrow Y(\sigma)$ be the closure of $X_s$.

\begin{proposition}
\label{functionscutoutcorrectfibersforcone}
Let $\sigma \in \overline{\Delta}$. Then $\chi^{u^\sigma_{i^\sigma_1}}f_{i^\sigma_1}, \dots, \chi^{u^\sigma_{i^\sigma_{n-d}}}f_{i^\sigma_{n-d}} \in A[\sigma^\vee \cap M]$, and for all $s \in S(k)$, we have that $\chi^{u^\sigma_{i^\sigma_1}}f_{i^\sigma_1}(s), \dots, \chi^{u^\sigma_{i^\sigma_{n-d}}}f_{i^\sigma_{n-d}}(s) \in k[\sigma^\vee \cap M]$ generate the ideal defining $X_s^\sigma$ in $Y(\sigma)$.
\end{proposition}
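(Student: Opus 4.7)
The plan is to apply \autoref*{tcideal} fiberwise to each $X_s$, with the Luxton--Qu theorem supplying the tropical-fan property for the special fiber and the preceding lemma supplying the uniform-in-$s$ choice of the exponents $u^\sigma_i$.

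First I would handle the membership claim $\chi^{u^\sigma_i} f_i \in A[\sigma^\vee \cap M]$. Because every coefficient $a^{(i)}_u$ is either a unit or zero in $A$, the monomial support of $f_i \in A[M]$ coincides with the support of each specialization $f_i(s) \in k[M]$. Picking any $s \in S(k)$ (using that $S$ is nonempty), \autoref*{boundaryinitial} applied to $f_i(s)$, with constancy of initial forms on the relative interiors of faces of $\sigma$ supplied by the family hypothesis and with the defining property of $u^\sigma_i$ from the preceding lemma, yields $\chi^{u^\sigma_i} f_i(s) \in k[\sigma^\vee \cap M]$. Thus every monomial appearing in $\chi^{u^\sigma_i} f_i$ lies in $\sigma^\vee \cap M$, so $\chi^{u^\sigma_i} f_i \in A[\sigma^\vee \cap M]$. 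Specializing to $i = i^\sigma_1, \dots, i^\sigma_{n-d}$ gives the first assertion.

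Next I would fix $s \in S(k)$ and invoke \autoref*{tcideal} for $X_s \hookrightarrow T$, with the linear forms $f_{i^\sigma_1}(s), \dots, f_{i^\sigma_{n-d}}(s)$ and the exponents $u^\sigma_{i^\sigma_1}, \dots, u^\sigma_{i^\sigma_{n-d}}$. Its six hypotheses are verified as follows: $X_s$ is pure dimension $d$ and sch\"on, hence reduced; the containment $\Delta \subset \overline{\Delta}$ and the unimodularity of $\overline{\Delta}$ are part of the standing hypotheses; $\Delta$ is a tropical fan for $X_s \hookrightarrow T$ by \autoref*{LuxtonQufansupportedontropistropicalfan}, since $X_s$ is sch\"on in $T$ and $\Delta$ is supported on $\Trop(X_s)$; the constancy of $\init_{w} (f_i(s))$ on the relative interior of each face of $\sigma$ is exactly the family hypothesis specialized to $s$; and the fact that $f_{i^\sigma_j}(s)$ and $\init_w(f_{i^\sigma_j}(s))$ generate the ideals defining $X_s$ and $\init_w X_s$ in $T$ for $w \in \sigma$ is the very property by which $i^\sigma_1, \dots, i^\sigma_{n-d}$ were chosen. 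The conclusion of \autoref*{tcideal} then says exactly that the specializations $\chi^{u^\sigma_{i^\sigma_j}} f_{i^\sigma_j}(s)$ generate the ideal of $X_s^\sigma$ in $Y(\sigma)$.

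There is no real obstacle; this is a bookkeeping argument in which the technical content has already been absorbed into \autoref*{tcideal}, \autoref*{boundaryinitial}, and \autoref*{LuxtonQufansupportedontropistropicalfan}. The only mildly delicate point is that a single exponent $u^\sigma_i$ must work uniformly in $s$, which is exactly the content of the lemma immediately preceding the proposition and follows from the restriction $a^{(i)}_u \in A^\times \cup \{0\}$, which forces $\supp(f_i(s))$ to be independent of $s$.
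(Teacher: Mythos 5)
Your proof is correct and follows essentially the same route as the paper's: fix $s\in S(k)$, note $X_s$ is reduced since it is sch\"on, get that $\Delta$ is a tropical fan for $X_s\hookrightarrow T$ from \autoref*{LuxtonQufansupportedontropistropicalfan}, invoke \autoref*{tcideal} fiberwise, and lift the membership in $k[\sigma^\vee\cap M]$ to $A[\sigma^\vee\cap M]$ using that each $a^{(i)}_u\in A^\times\cup\{0\}$ forces $\supp(f_i)$ to agree with $\supp(f_i(s))$. The only cosmetic difference is that you derive the membership claim up front via \autoref*{boundaryinitial} while the paper extracts it from the first part of the conclusion of \autoref*{tcideal}, which amounts to the same thing.
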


\begin{proof}
Let $s \in S(k)$. Because $X_s$ is sch\"{o}n, it is smooth and therefore reduced. By \autoref*{LuxtonQufansupportedontropistropicalfan} and hypothesis (\ref*{familytropicalfanhypothesis}), we have that $\Delta$ is a tropical fan for $X_s \hookrightarrow T$. Therefore the hypotheses and \autoref*{tcideal} imply that
\[
	\chi^{u^\sigma_{i^\sigma_1}}f_{i^\sigma_1}(s), \dots, \chi^{u^\sigma_{i^\sigma_{n-d}}}f_{i^\sigma_{n-d}}(s) \in k[\sigma^\vee \cap M],
\]
and $\chi^{u^\sigma_{i^\sigma_1}}f_{i^\sigma_1}(s), \dots, \chi^{u^\sigma_{i^\sigma_{n-d}}}f_{i^\sigma_{n-d}}(s)$ generate the ideal defining $X_s^\sigma$ in $Y(\sigma)$. Because each $a^{(i)}_u \in A^\times \cup \{0\}$, this also implies that
\[
	\chi^{u^\sigma_{i^\sigma_1}}f_{i^\sigma_1}, \dots, \chi^{u^\sigma_{i^\sigma_{n-d}}}f_{i^\sigma_{n-d}} \in A[\sigma^\vee \cap M],
\]
and we are done.
\end{proof}

For each $\sigma \in \overline{\Delta}$, let $X^\sigma \hookrightarrow Y(\sigma) \times_k S = \Spec(A[\sigma^\vee \cap M])$ be the closed subscheme defined by the ideal generated by $\chi^{u^\sigma_{i^\sigma_1}}f_{i^\sigma_1}, \dots, \chi^{u^\sigma_{i^\sigma_{n-d}}}f_{i^\sigma_{n-d}}$.

\begin{remark}
\label{fibersofXsigma}
By \autoref*{functionscutoutcorrectfibersforcone}, for each $\sigma \in \overline{\Delta}$ and each $s \in S(k)$, the fiber of $X^\sigma$ over $s$ is equal to $X_s^\sigma$ as a closed subscheme of $Y(\sigma)$.
\end{remark}

\begin{proposition}
\label{XsigmaissmoothoverS}
For each $\sigma \in \overline{\Delta}$, the morphism $X^\sigma \to S$ is smooth.
\end{proposition}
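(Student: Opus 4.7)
The plan is to verify smoothness of $X^\sigma \to S$ by reducing to the fiberwise Jacobian criterion: since $X^\sigma$ is cut out by exactly $n-d$ equations inside the smooth $S$-scheme $Y(\sigma) \times_k S$ (of relative dimension $n$, using hypothesis~(iii) to see that $Y(\sigma)$ is smooth), it suffices to show that every closed fiber of $X^\sigma \to S$ is smooth of pure dimension $d = n - (n-d)$ and then argue that the non-smooth locus is empty.

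The geometric heart of the proof is to show that $X_s^\sigma$ is smooth of pure dimension $d$ for every $s \in S(k)$. By hypothesis~(i), $X_s$ is sch\"on in $T$, and by hypothesis~(\ref*{familytropicalfanhypothesis}) together with \autoref*{LuxtonQufansupportedontropistropicalfan}, the unimodular subfan $\Delta$ is a tropical fan for $X_s \hookrightarrow T$. Hence $X_s^\Delta \subset Y(\Delta)$ is a sch\"on compactification, and since $\Delta$ inherits unimodularity from $\overline{\Delta}$, this compactification is smooth. Because $X_s^\Delta$ is proper, its image under the open immersion $Y(\Delta) \hookrightarrow Y(\overline{\Delta})$ is closed, and in particular it is the closure of $X_s$ in $Y(\overline{\Delta})$. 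Restricting to the open piece $Y(\sigma) \subset Y(\overline{\Delta})$ gives $X_s^\sigma$ as an open subscheme of the smooth scheme $X_s^\Delta$, so $X_s^\sigma$ is smooth of pure dimension $d$.

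Combining this with \autoref*{fibersofXsigma}, which identifies the closed fibers of $X^\sigma \to S$ with the schemes $X_s^\sigma$, the classical Jacobian criterion applied inside the fiber $Y(\sigma)$ shows that at every closed point $x$ of $X^\sigma$ the Jacobian of the defining equations (taken along the fiber direction of $Y(\sigma) \times_k S \to S$) has rank $n - d$. This is exactly the fiberwise Jacobian condition for $X^\sigma \to S$ to be smooth of relative dimension $d$ at $x$. The smooth locus of a finite-type morphism is open in the source, so its complement in $X^\sigma$ is a closed subset disjoint from the set of closed points; since $X^\sigma$ is finite type over $k$ and therefore Jacobson, this complement is empty, and $X^\sigma \to S$ is smooth.

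The main point requiring care is the verification that $X_s^\sigma$ is smooth even when $\sigma \in \overline{\Delta} \setminus \Delta$: in that case $\sigma$ is not in $\Trop(X_s)$, and one has to be careful that closing $X_s$ up inside $Y(\sigma) \subset Y(\overline{\Delta})$ does not introduce new singular behavior. The argument above handles this uniformly: the properness of $X_s^\Delta$ inside $Y(\overline{\Delta})$ forces $X_s^\sigma$ to be an open subscheme of a smooth proper scheme, regardless of whether $\sigma$ itself lies in $\Delta$.
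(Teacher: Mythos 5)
Your proof is correct, and it establishes the same two facts the paper does — that each fiber $X_s^\sigma$ is smooth of pure dimension $d$, and that this implies $X^\sigma \to S$ is smooth — but the second step goes by a genuinely different route. The paper argues that each irreducible component of $X^\sigma$ has dimension at most $\dim S + d$ (from the fibers) and at least $\dim(Y(\sigma) \times_k S) - (n-d)$ (from Krull's height theorem), so $X^\sigma$ is a complete intersection in a smooth variety and hence Cohen--Macaulay; combined with smoothness of $S$ and equidimensionality of the fibers, this yields flatness (``miracle flatness'') and then smoothness. You instead run the relative Jacobian criterion directly: since $X^\sigma$ is cut out in the smooth $S$-scheme $Y(\sigma) \times_k S$ by $n-d$ equations whose fiberwise restrictions cut out the smooth $d$-dimensional $X_s^\sigma$, the relative Jacobian has full rank $n-d$ at every closed point, so $X^\sigma \to S$ is smooth at every closed point, and then openness of the smooth locus plus the Jacobson property finishes. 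Your route avoids invoking the Cohen--Macaulay machinery at the cost of not recording the intermediate fact that $X^\sigma$ is a complete intersection; the paper's route gives that structural fact as a byproduct. Your treatment of the first step (smoothness of $X_s^\sigma$) is also somewhat more explicit than the paper's: you spell out that $X_s^\Delta$ is proper, hence closed in $Y(\overline{\Delta})$, hence equal to the closure of $X_s$ there, so that $X_s^\sigma = X_s^\Delta \cap Y(\sigma)$ is an open subscheme of the smooth proper $X_s^\Delta$ — a useful clarification of what the paper asserts in one sentence.
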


\begin{proof}
For all $s \in S(k)$, $X_s$ is pure dimension $d$ and sch\"{o}n in $T$, and $\overline{\Delta}$ is a unimodular fan containing a tropical fan for $X_s \hookrightarrow T$, so $X_s^\sigma$ is smooth and pure dimension $d$. Thus by \autoref*{fibersofXsigma}, each irreducible component of $X^\sigma$ has dimension at most
\[
	\dim S + d = \dim(Y(\sigma) \times_k S) - (n-d).
\]
Because $X^\sigma$ is a closed subscheme of $Y(\sigma) \times_k S$ defined by $n-d$ functions, each irreducible component of $X^\sigma$ has dimension at least $\dim(Y(\sigma) \times_k S) - (n-d)$, so $X^\sigma$ is a complete intersection in a smooth variety and is thus Cohen-Macaulay, and
\[
	d = \dim X^\sigma - \dim S.
\]
Then by \autoref*{fibersofXsigma}, the fact that each $X_s^\sigma$ is smooth and pure dimension $d$, and the fact that $S$ is smooth, we have that $X^\sigma \to S$ is smooth.
\end{proof}

In the remainder of this section, we will define $X^{\overline{\Delta}} \hookrightarrow Y(\overline{\Delta}) \times_k S$, show that it is glued together from the $X^\sigma \hookrightarrow Y(\sigma) \times_k S$, and obtain a proof of \autoref*{schoncompinfamily}.

There is an ideal sheaf on $Y(\overline{\Delta}) \times_k S$ such that on each $Y(\sigma) \times_k S$, the ideal sheaf is given by
\[
	( \chi^{u} f_i \, | \, \text{$i \in \{1, \dots, m\}$ and $u \in M$ such that $\chi^u f_i \in A[\sigma^\vee \cap M]$}) \subset A[\sigma^\vee \cap M].
\]
Let $X^{\overline{\Delta}} \hookrightarrow Y(\overline{\Delta}) \times_k S$ be the closed subscheme defined by this ideal sheaf.

\begin{proposition}
\label{XoverlinedeltaislocallyXsigma}
For each $\sigma \in \overline{\Delta}$, we have that $X^{\overline{\Delta}} \cap (Y(\sigma) \times_k S) = X^\sigma$ as closed subschemes of $Y(\sigma) \times_k S$.
\end{proposition}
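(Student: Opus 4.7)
The strategy is to show that $Z := X^{\overline{\Delta}} \cap (Y(\sigma) \times_k S)$ equals $X^\sigma$ as closed subschemes of $Y(\sigma) \times_k S$. One containment $Z \hookrightarrow X^\sigma$ is immediate: by \autoref*{functionscutoutcorrectfibersforcone}, the defining generators $\chi^{u^\sigma_{i^\sigma_j}} f_{i^\sigma_j}$ of $X^\sigma$ lie in $A[\sigma^\vee \cap M]$, so they appear among the generators of the ideal sheaf of $X^{\overline{\Delta}}$ restricted to $Y(\sigma) \times_k S$. What remains is to see that this closed immersion is an isomorphism.

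I plan to do this by first verifying fiberwise equality over $S(k)$ and then upgrading to a scheme-theoretic identity by a Jacobson-ring argument. For the fiberwise step, fix $s \in S(k)$. By \autoref*{fibersofXsigma}, $(X^\sigma)_s = X_s^\sigma$, whose defining ideal in $k[\sigma^\vee \cap M]$ is $I(X_s) \cap k[\sigma^\vee \cap M]$. On the other hand, the fiber $Z_s$ is cut out by all $\chi^u f_i(s)$ with $\chi^u f_i \in A[\sigma^\vee \cap M]$. Each such element lies in $k[\sigma^\vee \cap M]$ by choice and in $I(X_s)$ since $f_i(s) \in I(X_s)$ and this is an ideal, so it belongs to the ideal of $X_s^\sigma$. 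Conversely, the generators include the $\chi^{u^\sigma_{i^\sigma_j}} f_{i^\sigma_j}(s)$, which already generate $I(X_s^\sigma)$ by \autoref*{functionscutoutcorrectfibersforcone}. Hence $Z_s = X_s^\sigma = (X^\sigma)_s$.

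To conclude, let $B = A[\sigma^\vee \cap M]/(\chi^{u^\sigma_{i^\sigma_1}}f_{i^\sigma_1}, \dots, \chi^{u^\sigma_{i^\sigma_{n-d}}}f_{i^\sigma_{n-d}})$ be the coordinate ring of $X^\sigma$, and let $I \subseteq B$ be the ideal defining $Z \hookrightarrow X^\sigma$. The fiberwise equality $Z_s = (X^\sigma)_s$ for each $s \in S(k)$ translates, via the right-exact sequence $I \otimes_A A/\mathfrak{m}_s \to B/\mathfrak{m}_s B \to \mathcal{O}_{Z_s} \to 0$, to the statement that $I \subseteq \mathfrak{m}_s B$ for every maximal ideal $\mathfrak{m}_s \subset A$ corresponding to a $k$-point of $S$. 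Since $A$ is a finitely generated $k$-algebra, so is $B$, and since $k$ is algebraically closed every closed point of $\Spec B$ is a $k$-point mapping to a closed (hence $k$-rational) point of $\Spec A$. Therefore $\bigcap_{s \in S(k)} \mathfrak{m}_s B$ is contained in the intersection of all maximal ideals of the Jacobson ring $B$, which is zero by the Nullstellensatz. Thus $I = 0$ and $Z = X^\sigma$.

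The main obstacle is exactly this upgrade from fiberwise to scheme-theoretic equality. Direct manipulation of the ideals seems awkward because the generating sets of $Z$ and $X^\sigma$ are different, but the Jacobson-ring argument circumvents this cleanly and avoids any appeal to flatness of $Z$ over $S$.
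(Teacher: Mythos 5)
Your proof has the right shape, and the fiberwise step is correct, but there is a genuine gap in the final Nullstellensatz argument. The intersection of all maximal ideals of a finitely generated $k$-algebra $B$ is the \emph{nilradical} of $B$, not the zero ideal. (This is what Jacobson-ring/Nullstellensatz gives: the Jacobson radical equals the nilradical.) Thus your chain of inclusions only yields $I \subseteq \mathrm{nil}(B)$, which is $I = 0$ precisely when $B$ is reduced. You never establish that $B$, the coordinate ring of $X^\sigma$, is reduced, and without that the argument does not close. Your remark that the Jacobson-ring argument ``avoids any appeal to flatness'' is true as far as $Z$ is concerned, but the reducedness of $X^\sigma$ is not free and is exactly the content you need.

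The paper makes this explicit: by \autoref*{XsigmaissmoothoverS}, $X^\sigma$ is smooth over $S$, and $S$ is smooth over $k$, so $X^\sigma$ is smooth over $k$ and in particular reduced. Once you add that input, your argument goes through and is in fact essentially the paper's proof in slightly different dress --- the paper phrases the conclusion as ``a closed subscheme supported on all of a reduced scheme is the whole scheme,'' while you phrase it as ``$I \subseteq \bigcap \mathfrak{n} = \mathrm{nil}(B) = 0$.'' These are the same observation. So: cite \autoref*{XsigmaissmoothoverS} and the smoothness of $S$ to get $B$ reduced, and then your Jacobson-ring conclusion is legitimate.
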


\begin{proof}
Let $\sigma \in \overline{\Delta}$. We first note that by construction, $X^{\overline{\Delta}} \cap (Y(\sigma) \times_k S)$ is a closed subscheme of $X^\sigma$. We will next show that $X^{\overline{\Delta}} \cap (Y(\sigma) \times_k S)$ and $X^\sigma$ have the same support. To do this, it will be sufficient to show that for each $s \in S(k)$, the fiber of $X^{\overline{\Delta}} \cap (Y(\sigma) \times_k S)$ over $s$ is equal to the fiber of $X^\sigma$ over $s$.

Let $s \in S(k)$. By construction, $X_s^\sigma$ is a closed subscheme of the fiber of $X^{\overline{\Delta}} \cap (Y(\sigma) \times_k S)$ over $s$. Also, the fiber of $X^{\overline{\Delta}} \cap (Y(\sigma) \times_k S)$ over $s$ is a closed subscheme of the fiber of $X^\sigma$ over $s$, which by \autoref*{fibersofXsigma}, is equal to $X_s^\sigma$. Thus the fiber of $X^{\overline{\Delta}} \cap (Y(\sigma) \times_k S)$ over $s$ is equal to the fiber of $X^\sigma$ over $s$.

Therefore, $X^{\overline{\Delta}} \cap (Y(\sigma) \times_k S)$ is a closed subscheme of $X^\sigma$ that is supported on all of $X^\sigma$. By \autoref*{XsigmaissmoothoverS} and the fact that $S$ is smooth over $k$, we have that $X^\sigma$ is smooth over $k$, and in particular, $X^\sigma$ is reduced. Thus we are done.
\end{proof}

The following corollary completes the proof of \autoref*{schoncompinfamily}.

\begin{corollary}
The morphism $X^{\overline{\Delta}} \to S$ is smooth, and for each $s \in S(k)$, the fiber of $X^{\overline{\Delta}}$ over $s$ is equal to $X_s^\Delta$ as a closed subscheme of $Y(\Delta) \subset Y(\overline{\Delta})$.
\end{corollary}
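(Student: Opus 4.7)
The plan is to deduce this corollary directly from the two preceding results, namely \autoref*{XsigmaissmoothoverS} and \autoref*{XoverlinedeltaislocallyXsigma}, and then invoke completeness of each $X_s^\Delta$ to handle the containment in $Y(\Delta)$. The point is that \autoref*{XoverlinedeltaislocallyXsigma} realizes $X^{\overline{\Delta}}$ as the gluing of the local pieces $X^\sigma$ along the standard affine open cover $\{Y(\sigma) \times_k S\}_{\sigma \in \overline{\Delta}}$ of $Y(\overline{\Delta}) \times_k S$, so both properties (smoothness and the fiber description) can be checked on this cover.

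For smoothness, I would argue as follows. Since smoothness of a morphism is Zariski local on the source, and since $X^{\overline{\Delta}}$ is covered by the open subschemes $X^{\overline{\Delta}} \cap (Y(\sigma) \times_k S)$ as $\sigma$ ranges over $\overline{\Delta}$, it suffices to show that each restriction is smooth over $S$. By \autoref*{XoverlinedeltaislocallyXsigma} this restriction is exactly $X^\sigma$, and \autoref*{XsigmaissmoothoverS} tells us that $X^\sigma \to S$ is smooth. This gives smoothness of $X^{\overline{\Delta}} \to S$ immediately.

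For the fiber description, fix $s \in S(k)$. By \autoref*{XoverlinedeltaislocallyXsigma} and \autoref*{fibersofXsigma}, the intersection of the fiber $(X^{\overline{\Delta}})_s$ with each open $Y(\sigma)$ equals the closure $X_s^\sigma$ of $X_s$ in $Y(\sigma)$, so $(X^{\overline{\Delta}})_s$ is precisely the scheme-theoretic closure of $X_s$ in $Y(\overline{\Delta})$. The remaining task is to identify this with $X_s^\Delta$, viewed inside $Y(\Delta) \subset Y(\overline{\Delta})$. Since $X_s$ is sch\"{o}n and, by hypothesis (\ref*{familytropicalfanhypothesis}) together with \autoref*{LuxtonQufansupportedontropistropicalfan}, $\Delta$ is a tropical fan for $X_s \hookrightarrow T$, the compactification $X_s^\Delta$ is complete over $k$, hence proper, hence closed when embedded in $Y(\overline{\Delta})$. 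Because $X_s$ is dense in $X_s^\Delta$, this forces the closure of $X_s$ in $Y(\overline{\Delta})$ to coincide with $X_s^\Delta$, which in particular lives inside $Y(\Delta)$. Comparing scheme structures on the open cover then gives $(X^{\overline{\Delta}})_s = X_s^\Delta$ as closed subschemes of $Y(\Delta)$.

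The only step with any subtlety is the containment $(X^{\overline{\Delta}})_s \subset Y(\Delta)$: a priori the gluing description only places the fiber inside $Y(\overline{\Delta})$, and ruling out points in the orbits $\cO_\sigma$ for $\sigma \in \overline{\Delta} \setminus \Delta$ is exactly what the completeness of $X_s^\Delta$ accomplishes. Everything else is a formal consequence of the local description already established.
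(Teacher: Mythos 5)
Your proof is correct and takes essentially the same approach as the paper: reduce smoothness to the local pieces via Propositions \ref*{XsigmaissmoothoverS} and \ref*{XoverlinedeltaislocallyXsigma}, identify the fiber as the closure of $X_s$ in $Y(\overline{\Delta})$, and then use properness of $X_s^\Delta$ (since $\Delta$ is a tropical fan for $X_s$) to conclude this closure equals $X_s^\Delta$ and lies in $Y(\Delta)$. You spell out a few steps the paper leaves implicit (notably the appeal to Luxton--Qu for why $\Delta$ is a tropical fan), but the argument is the same.
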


\begin{proof}
The fact that $X^{\overline{\Delta}} \to S$ is smooth follows immediately from Propositions \ref*{XsigmaissmoothoverS} and \ref*{XoverlinedeltaislocallyXsigma}. Let $s \in S(k)$, and let $X_s^{\overline{\Delta}} \hookrightarrow Y(\overline{\Delta})$ be the closure of $X_s$. Then \autoref*{fibersofXsigma} and \autoref*{XoverlinedeltaislocallyXsigma} imply that $X_s^{\overline{\Delta}}$ is equal to the fiber of $X^{\overline{\Delta}}$ over $s$. Because $\Delta$ is a tropical fan for $X_s \hookrightarrow T$, we have that $X_s^\Delta$ is proper over $k$. Thus $X_s^\Delta = X_s^{\overline{\Delta}}$, and we are done.
\end{proof}

\section{Additive invariants of the Milnor fiber}
\label{additiveinvariantsMilnorfibersection}

Let $d, n \in \Z_{>0}$, let $\cM$ be a rank $d$ loop-free matroid on $\{1, \dots, n\}$, and assume that the characteristic of $k$ does not divide $n$. In this section, we will prove \autoref*{additiveinvariantsmilnorfiber}. 

Let $\mu_n \to \bG_{m,k}^n$ be the algebraic group homomorphism given by composing the inclusion $\mu_n \hookrightarrow \bG_{m,k}$ with the diagonal morphism $\bG_{m,k} \to \bG_{m,k}^n$. Endow $\bG_{m,k}^n$ with the $\mu_n$-action induced by $\mu_n \to \bG_{m,k}^n$ and the action of $\bG_{m,k}^n$ on itself given by left multiplication. 

\begin{remark}
Each $\xi \in \mu_n$ acts on $\bG_{m,k}^n$ by scalar multiplication, so for each $\cA \in \Gr_\cM(k)$, the Milnor fiber $F_\cA$ is invariant under the $\mu_n$-action on $\bG_{m,k}^n$, and the $\mu_n$-action on $F_\cA$ is equal to the restriction of the $\mu_n$-action on $\bG_{m,k}^n$.
\end{remark}

We now show the existence of fans that will be used in proving \autoref*{additiveinvariantsmilnorfiber}.

\begin{lemma}
\label{constructingmilnorcompactificationfan}
There exist fans $\Delta, \overline{\Delta}$ in $\Z^n$ such that

\begin{enumerate}[(i)]

\item $\Delta \subset \overline{\Delta}$,

\item the $\bG_{m,k}^n$-toric variety defined by $\overline{\Delta}$ is smooth and projective,

\item the support of $\Delta$ is equal to $\Trop(\cM) \cap (1, \dots, 1)^\perp$,

\item \label{contructingmilnorcompactificationfanmonomialminus1}for each $\sigma \in \overline{\Delta}$ and each pair $w_1, w_2 \in \relint(\sigma)$,
\[
	\init_{w_1}(x_1 \cdots x_n - 1) = \init_{w_2}(x_1 \cdots x_n - 1),
\]

\item \label{constructingmilnorcompactificationfancircuitforms}for each $\sigma \in \overline{\Delta}$, each pair $w_1, w_2 \in \relint(\sigma)$, each circuit $C$ in $\cM$, and each $\cA \in \Gr_\cM(k)$,
\[
	\init_{w_1} L_C^\cA = \init_{w_2} L_C^\cA,
\]
and

\item \label{constructingmilnorcompactificationfanwmaximal}for each $\sigma \in \overline{\Delta}$, there exists $B \in \cB(\cM)$ such that for all $\cA \in \Gr_\cM(k)$, we have that $\{L_{C(\cM, i, B)}^\cA \, | \, i \in \{1, \dots, n\} \setminus B\} \cup \{x_1 \cdots x_n -1\}$ is a generating set for the ideal defining $F_\cA$ in $\bG_{m,k}^n$ and $\{\init_w L_{C(\cM, i, B)}^\cA \, | \, i \in \{1, \dots, n\} \setminus B\} \cup \{\init_w(x_1 \cdots x_n -1)\}$ is a generating set for the ideal defining $\init_w F_\cA$ in $\bG_{m,k}^n$ for all $w \in \sigma$.

\end{enumerate}
\end{lemma}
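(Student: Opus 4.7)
The plan is to produce $\overline{\Delta}$ as a smooth projective refinement of a common refinement $\Sigma^\star$ of three simpler fans on $\R^n$, each chosen to enforce one family of the technical conditions.

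First, for each circuit $C$ of $\cM$, let $\Sigma_C$ be the fan on $\R^n$ pulled back via the coordinate projection $\R^n \to \R^C$ from the normal fan of the standard simplex. On each cone of $\Sigma_C$ the subset of $C$ on which $w_i$ attains its minimum is constant, so for every $\cA \in \Gr_\cM(k)$ the initial form $\init_w L_C^\cA$ is constant on the relative interior. Next, let $\Sigma_H$ be the complete fan on $\R^n$ whose two maximal cones are the half-spaces $\{w : \sum_i w_i \geq 0\}$ and $\{w : \sum_i w_i \leq 0\}$, meeting along $(1,\dots,1)^\perp$; on each cone of $\Sigma_H$ the initial form $\init_w(x_1 \cdots x_n - 1)$ is a single one of the expressions $-1$, $x_1\cdots x_n$, or $x_1\cdots x_n - 1$, according to the sign of $\sum_i w_i$. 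Finally, let $\Sigma_P$ be the normal fan of the matroid base polytope $P_\cM \subset \R^n$; its cones are loci of constancy of $\cM_w$, and the subfan consisting of cones on which $\cM_w$ is loop-free is the Bergman fan, whose support is $\Trop(\cM)$.

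Let $\Sigma^\star$ be the common refinement of $\Sigma_H$, $\Sigma_P$, and the finitely many $\Sigma_C$. This is a finite complete rational fan on $\R^n$. In particular, $\Trop(\cM) \cap (1,\dots,1)^\perp$ is a union of cones of $\Sigma^\star$, because $(1,\dots,1)^\perp$ is a cone of $\Sigma_H$ and $\Trop(\cM)$ is a union of cones of $\Sigma_P$. By standard toric-geometric arguments, $\Sigma^\star$ admits a unimodular projective refinement $\overline{\Delta}$; let $\Delta$ be the subfan of $\overline{\Delta}$ consisting of all cones contained in $\Trop(\cM) \cap (1,\dots,1)^\perp$.

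Conditions (i)--(iii) are immediate from the construction. Conditions (iv) and (v) are inherited from the corresponding properties of $\Sigma_H$ and the $\Sigma_C$, because any cone of $\overline{\Delta}$ is contained in some cone of each. For condition (vi), given $\sigma \in \overline{\Delta}$, pick any $w_0 \in \relint(\sigma)$ and any $B \in \cB(\cM_{w_0})$. Since $\sigma$ is contained in a single cone of $\Sigma_P$, and moving $w$ within a closed cone of $\Sigma_P$ can only cause the set $\cB(\cM_w)$ of $P_\cM$-maximizers to grow as $w$ passes to a face, we have $B \in \cB(\cM_w)$ for every $w \in \sigma$. \autoref*{grobnercompleteintersectionbasismilnorfiber} then produces the required generating sets for the ideals of $F_\cA$ and $\init_w F_\cA$. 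The main obstacle is this last verification of (vi): one must argue carefully that a basis chosen to be $w_0$-maximal for some interior point $w_0$ of $\sigma$ remains maximal for every $w$ in the entire closure $\sigma$, relying on the monotonicity of $\cB(\cM_w)$ under face degeneration in $\Sigma_P$. Once this is in place, the passage to a smooth projective refinement and the inheritance of the other properties are routine.
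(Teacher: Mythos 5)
Your construction is essentially the paper's: take the common refinement of the normal fan of the Newton polytope of $x_1\cdots x_n-1$, the normal fans of the circuit Newton polytopes, and a fan linearizing $w\mapsto\max_{B\in\cB(\cM)}\sum_{i\in B}w_i$ (you use the normal fan of the matroid base polytope, which is the canonical such choice), then pass to a smooth projective refinement and take the subfan supported on $\Trop(\cM)\cap(1,\dots,1)^\perp$. The one small streamlining over the paper is that you observe $\Trop(\cM)\cap(1,\dots,1)^\perp$ is already a union of cones of the common refinement of $\Sigma_H$ and $\Sigma_P$, so you do not need to separately complete a fan supported on that set; this is correct, and the monotonicity argument you give for condition (vi) is the same observation the paper uses to conclude $\bigcap_{w\in\sigma}\cB(\cM_w)\neq\emptyset$.
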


\begin{proof}
Let $\Sigma$ be a fan in $\Z^n$ supported on $\Trop(\cM) \cap (1, \dots, 1)^\perp$. There exists a fan $\overline{\Sigma}$ in $\Z^n$ supported on $\R^n$ and containing $\Sigma$. See for example \cite{Ewald}[III. Theorem 2.8]. Let $\Sigma_1$ be the dual fan to the Newton polyhedron of $(x_1 \cdots x_n - 1)$. For each circuit $C$ in $\cM$, let $\Sigma_C$ be the dual fan to the Newton polyhedron of $L_C^\cA$ for some, or equivalently for all, $\cA \in \Gr_\cM(k)$. Let $\Sigma_2$ be a fan in $\Z^n$, supported on $\R^n$, such that the function $(w_1, \dots, w_n) \mapsto \max_{B \in \cM} \sum_{i \in B} w_i$ is linear on each cone in $\Sigma_2$.

Now let $\overline{\Delta}_1$ be the common refinement of $\overline{\Sigma}, \Sigma_1, \Sigma_2$, and $\Sigma_C$ for each circuit $C$ in $\cM$. By construction, $\overline{\Delta}_1$ satisfies (\ref*{contructingmilnorcompactificationfanmonomialminus1}) and (\ref*{constructingmilnorcompactificationfancircuitforms}) and contains a fan supported on $\Trop(\cM) \cap (1, \dots, 1)^\perp$. By the choice of $\Sigma_2$, for each $\sigma \in \overline{\Delta}_1$,
\[
	\bigcap_{w \in \sigma} \cB(\cM_w) \neq \emptyset.
\]
Thus by \autoref*{grobnercompleteintersectionbasismilnorfiber}, we have that $\overline{\Delta}_1$ satisfies (\ref*{constructingmilnorcompactificationfanwmaximal}).

By construction, $\overline{\Delta}_1$ is supported on $\R^n$. Thus by toric Chow's lemma and toric resolution of singularities, there exists a refinement $\overline{\Delta}$ of $\overline{\Delta}_1$ whose associated toric variety is smooth and projective. Letting $\Delta$ be the subfan of $\overline{\Delta}$ that is supported on $\Trop(\cM) \cap (1, \dots, 1)^\perp$, we see that we are done.
\end{proof}

Let $\Delta, \overline{\Delta}$ be fans in $\Z^n$ satisfying the conclusion of \autoref*{constructingmilnorcompactificationfan}. Let $Y(\Delta)$ (resp. $Y(\overline{\Delta})$) be the $\bG_{m,k}^n$-toric variety defined by $\Delta$ (resp. $\overline{\Delta}$), and endow it with the $\mu_n$-action induced by $\mu_n \to \bG_{m,k}^n$ and the $\bG_{m,k}^n$-action on $Y(\Delta)$ (resp. $Y(\overline{\Delta})$).

For each $\cA \in \Gr_\cM(k)$, let $F_\cA^\Delta \hookrightarrow Y(\Delta)$ be the closure of $F_\cA$. By \autoref*{tropcompequivariantclass} and our choice of $\Delta$, we see that each $F_\cA^\Delta$ is invariant under the $\mu_n$-action on $Y(\Delta)$, so we endow each $F_\cA^\Delta$ with the $\mu_n$-action given by restriction of the $\mu_n$-action on $Y(\Delta)$.

\begin{proposition}
\label{initialdegenerationscompactificationsequivariantclasses}
Let $\cA \in \Gr_\cM(k)$, and for each $\sigma \in \Delta$, choose some $w_\sigma \in \relint(\sigma)$. Then
\[
	(\bL-1)^{\dim \Delta} [F_\cA^\Delta, \mu_n] = \sum_{\sigma \in \Delta} (\bL-1)^{\dim \Delta - \dim \sigma} [F_{\cA_{w_\sigma}}, \mu_n] \in K_0^{\mu_n}(\Var_k).
\]
\end{proposition}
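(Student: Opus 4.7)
The plan is to apply Proposition \ref{tropcompequivariantclass} directly to $F_\cA \hookrightarrow \bG_{m,k}^n$, using the fan $\Delta$ and a careful choice of defining functions, and then to convert the resulting initial degeneration classes into Milnor fiber classes using Theorem \ref{initialmilnorismilnorinitial}.

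First I would fix the set of defining functions. Take
\[
	\{f_1, \dots, f_m\} = \{L_C^\cA \mid \text{$C$ a circuit of $\cM$}\} \cup \{x_1 \cdots x_n - 1\} \subset k[x_1^{\pm 1}, \dots, x_n^{\pm 1}],
\]
which certainly contains generators for the ideal defining $F_\cA$ in $\bG_{m,k}^n$. Next I would verify the hypotheses of Proposition \ref{tropcompequivariantclass} in turn. Since the characteristic of $k$ does not divide $n$, $F_\cA$ is smooth and pure dimension $d-1$, hence reduced. The fan $\Delta$ is unimodular by property (ii) of Lemma \ref{constructingmilnorcompactificationfan}. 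By \autoref{tropicalizationofmilnorfiber} we have $\Trop(F_\cA) = \Trop(\cM) \cap (1,\dots,1)^\perp$, which equals the support of $\Delta$ by property (iii); since $F_\cA$ is schön, \autoref{LuxtonQufansupportedontropistropicalfan} (Luxton--Qu) then says that $\Delta$ is a tropical fan for $F_\cA \hookrightarrow \bG_{m,k}^n$. Properties (iv) and (v) of Lemma \ref{constructingmilnorcompactificationfan} tell us that the initial forms of our generators are constant on each relative interior $\relint(\sigma)$. Finally, property (vi) together with \autoref{grobnercompleteintersectionbasismilnorfiber} guarantees that for each $\sigma \in \Delta$ we can extract from $\{f_1,\dots,f_m\}$ a subset of $n-d$ elements generating the ideal of $F_\cA$ whose initial forms generate the ideal of $\init_w F_\cA$ for all $w \in \sigma$, which is the remaining hypothesis.

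Next I would verify that $F_\cA$ is invariant under the $\mu_n$-action on $\bG_{m,k}^n$, which is immediate because each $\xi \in \mu_n$ acts by scalar multiplication and preserves $F_\cA = X_\cA \cap V(x_1 \cdots x_n - 1)$. Applying Proposition \ref{tropcompequivariantclass} then yields
\[
	(\bL-1)^{\dim \Delta} [F_\cA^\Delta, \mu_n] = \sum_{\sigma \in \Delta} (\bL-1)^{\dim \Delta - \dim \sigma} [\init_{w_\sigma} F_\cA, \mu_n] \in K_0^{\mu_n}(\Var_k),
\]
where $\init_{w_\sigma} F_\cA$ is endowed with the restriction of the $\mu_n$-action on $\bG_{m,k}^n$.

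To finish, I would identify each $\init_{w_\sigma} F_\cA$ with $F_{\cA_{w_\sigma}}$ via \autoref{initialmilnorismilnorinitial}, which applies since $w_\sigma \in \relint(\sigma) \subset \Trop(\cM) \cap (1,\dots,1)^\perp$. The $\mu_n$-actions agree: on both sides the action is the restriction of the scalar-multiplication action on $\bG_{m,k}^n$, and the natural $\mu_n$-action on $F_{\cA_{w_\sigma}}$ is by definition this restriction. Substituting gives the desired identity. There is no real obstacle here beyond bookkeeping; the only point requiring a moment of care is confirming that the two $\mu_n$-actions on $\init_{w_\sigma} F_\cA = F_{\cA_{w_\sigma}}$ coincide, which is immediate from the fact that the $\mu_n$-action on $\bG_{m,k}^n$ is by scalar multiplication and is therefore preserved under forming initial degenerations of $\mu_n$-invariant subschemes.
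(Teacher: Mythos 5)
Your proof is correct and takes essentially the same approach as the paper, which disposes of this proposition in one sentence by citing Proposition~\ref{tropcompequivariantclass}, the choice of $\Delta$ from Lemma~\ref{constructingmilnorcompactificationfan}, and Theorem~\ref{initialmilnorismilnorinitial}; you have simply spelled out the hypothesis-checking that the paper leaves implicit. One tiny slip of bookkeeping: since $F_\cA$ has dimension $d-1$ in $\bG_{m,k}^n$, the complete-intersection subset extracted in hypothesis~(v) of Proposition~\ref{tropcompequivariantclass} has $n-d+1$ elements (the $n-d$ circuit forms plus $x_1\cdots x_n-1$), matching codimension $n-(d-1)$; you wrote ``$n-d$'' using the proposition's internal notation, which is fine but worth flagging so the two uses of $d$ aren't conflated.
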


\begin{proof}
This follows immediately from \autoref*{tropcompequivariantclass}, our choice of $\Delta$, and the fact that $\init_w F_\cA = F_{\cA_w}$ for all $w \in \Trop(\cM) \cap (1, \dots, 1)^\perp$.
\end{proof}

In the following lemmas, we construct families of linear subspaces of type $\cM$.

\begin{lemma}
\label{universalcircuitforms}
There exists a set $\{a_i^C\}$ of units on $\Gr_\cM$, indexed by circuits $C$ in $\cM$ and elements $i \in C$, such that if $X \hookrightarrow \bA_k^n \times_k \Gr_\cM$ is the closed subscheme defined by the ideal generated by
\[
	\{\sum_{i \in C} a_i^Cx_i \, | \, \text{$C$ a circuit in $\cM$}\},
\]
then for each $\cA \in \Gr_\cM(k)$, the fiber of $X$ over $\cA$ is equal to $X_\cA$ as a closed subscheme of $\bA_k^n$.
\end{lemma}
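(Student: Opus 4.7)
The plan is to construct the units $a_i^C$ explicitly from Pl\"ucker coordinates on $\Gr_{d,n}$ restricted to $\Gr_\cM$. Recall that $\Gr_{d,n}$ carries the Pl\"ucker line bundle $\cO(1)$ with distinguished global sections $p_J$ indexed by $J \in \binom{\{1,\dots,n\}}{d}$, and by the definition of $\Gr_\cM$, a section $p_J$ restricts to a nowhere-vanishing section of $\cO(1)|_{\Gr_\cM}$ precisely when $J \in \cB(\cM)$. Fixing any reference basis $B_0 \in \cB(\cM)$, the quotients $p_J/p_{B_0}$ therefore lie in $\cO(\Gr_\cM)^\times$ for every $J \in \cB(\cM)$.

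For each circuit $C$ of $\cM$, I would choose an auxiliary subset $E_C \subseteq \{1,\ldots,n\} \setminus C$ of size $d - |C| + 1$ with the property that $(C \setminus \{i\}) \cup E_C \in \cB(\cM)$ for every $i \in C$. The existence of such an $E_C$ follows from the fact that $\mathrm{cl}(C \setminus \{i\}) = \mathrm{cl}(C)$ for any $i \in C$ when $C$ is a circuit, so the contractions $\cM / (C \setminus \{i\})$ all coincide as matroids on $\{1,\ldots,n\} \setminus \mathrm{cl}(C)$; any basis of this common contraction serves as $E_C$. Writing $F = C \cup E_C$ (a set of size $d+1$) with its induced total order from $\{1,\ldots,n\}$, I define
\[
    a_i^C = (-1)^{\mathrm{pos}(i) - 1} \cdot \frac{p_{(C \setminus \{i\}) \cup E_C}}{p_{B_0}} \in \cO(\Gr_\cM)^\times,
\]
where $\mathrm{pos}(i)$ is the position of $i$ within the ordered set $F$.

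The key verification is the classical Cramer/Pl\"ucker identity: for any $d \times (d+1)$ matrix with columns $(b_j)_{j \in F}$, the dependence relation $\sum_{j \in F} (-1)^{\mathrm{pos}(j) - 1} \det(M_{\hat{j}}) \cdot b_j = 0$ holds, where $M_{\hat{j}}$ is obtained by deleting column $j$. Applied at a point $\cA \in \Gr_\cM(k)$ represented by a $d \times n$ matrix, the terms indexed by $j \in E_C$ vanish because $F \setminus \{j\} = C \cup (E_C \setminus \{j\})$ contains the circuit $C$ and is therefore dependent in $\cM$, so its Pl\"ucker coordinate evaluates to zero. What remains is $\sum_{i \in C} (-1)^{\mathrm{pos}(i) - 1} p_{(C \setminus \{i\}) \cup E_C}(\cA) \cdot b_i = 0$, which after dividing by the nonzero scalar $p_{B_0}(\cA)$ shows that the linear form $\sum_{i \in C} a_i^C(\cA) x_i$ vanishes on $X_\cA$ and has nonvanishing coefficient on every $i \in C$; by the uniqueness of $L_C^\cA$ up to scalar, it is therefore a nonzero scalar multiple of $L_C^\cA$.

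The fiber property then follows: by \autoref*{fundamentalcircuitsdefinebasisforlinearsubspace}, the ideal defining $X_\cA$ in $\bA_k^n$ is already generated by the $L_{C(\cM, i, B_0)}^\cA$ for $i \notin B_0$, and our construction produces nonzero scalar multiples of every such $L_C^\cA$. Hence the ideal generated by $\{\sum_{i \in C} a_i^C(\cA) x_i\}$ as $C$ ranges over all circuits is exactly the ideal of $X_\cA$, so the fiber of $X$ over $\cA$ is $X_\cA$. The main subtlety to manage carefully is the sign bookkeeping in the Cramer expansion together with verifying the existence of $E_C$; both become routine once the orientation convention on $F$ is fixed.
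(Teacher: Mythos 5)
Your proof is correct and follows essentially the same strategy as the paper: express the coefficients as ratios of Pl\"ucker coordinates, producing for each circuit $C$ an alternating Cramer-type relation indexed by a $(d+1)$-element set containing $C$. The only cosmetic difference is in how that $(d+1)$-set is produced — the paper takes $B \cup \{j\}$ with $C = C(\cM, j, B)$ a fundamental circuit and normalizes by $p_B$, while you build $C \cup E_C$ via a basis $E_C$ of the contraction $\cM/\mathrm{cl}(C)$ and normalize by a fixed $p_{B_0}$ — but these yield the same collection of Pl\"ucker relations up to unit scalars, and the rest of the argument (vanishing of the $E_C$ terms, non-vanishing on $C$, comparison with \autoref*{fundamentalcircuitsdefinebasisforlinearsubspace}) matches.
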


\begin{proof}
For each regular function $a$ on $\Gr_\cM$ and each $\cA \in \Gr_\cM(k)$, let $a(\cA) \in k$ denote the evaluation of $a$ at $\cA$. It is sufficient to show that if $C$ is a circuit in $\cM$, then there exists a set $\{a_i^C\}$ of units on $\Gr_\cM$, indexed by $i \in C$, such that $\sum_{i \in C} a_i^C(\cA) x_i$ is equal to a nonzero scalar multiple of $L_C^\cA$ for all $\cA \in \Gr_\cM(k)$.

Let $C$ be a circuit in $\cM$. Let $B \in \cB(\cM)$ and $j \in \{1, \dots, n\} \setminus B$ be such that $C = C(\cM, j, B)$. Let $\Gr_{d,n} \hookrightarrow \Proj(k[y_I \, | \, I \subset \{1, \dots, n\}, \# I = d])$ be the Pl\"{u}cker embedding of $\Gr_{d,n}$. For each $I \subset \{1, \dots, n\}$ with $\# I = d$, set $b_I$ to be the restriction of $y_I/y_B$ to $\Gr_\cM$. Then $b_I$ is a unit if $I \in \cB(\cM)$, and $b_I = 0$ otherwise. Write $B \cup \{j\} = \{i_1, \dots, i_{d+1}\}$ with $i_1 < i_2 < \dots < i_{d+1}$. For each $\ell \in \{1, \dots, d+1\}$, set
\[
	a_{i_\ell}^C = (-1)^\ell b_{(B \cup \{j\}) \setminus \{i_\ell\}}.
\]
By \autoref*{fundamentalcircuitbasisexchange}, $a_i^C$ is a unit for all $i \in C$, and $a_i^C = 0$ for all $i \in (B \cup \{j\}) \setminus C$. By construction, for all $\cA \in \Gr_\cM(k)$,
\[
	\sum_{i \in C} a_i^C(\cA) x_i = \sum_{\ell \in \{1, \dots, d+1\}} (-1)^\ell b_{(B \cup \{j\}) \setminus \{i_\ell\}}(\cA)x_{i_\ell}
\]
is in the ideal defining $X_\cA$ in $\bA_k^n$. Thus $\sum_{i \in C} a_i^C(\cA) x_i$ is a nonzero scalar multiple of $L_C^\cA$, and we are done.
\end{proof}

\begin{lemma}
\label{familyofmilnorfibers}
Let $\cA_1, \cA_2 \in \Gr_\cM(k)$ be in the same irreducible component of $\Gr_\cM$. Then there exists an affine nonempty connected smooth finite type scheme $S = \Spec(A)$ over $k$ and a subset $\{a_i^C\} \subset A^\times$, indexed by circuits $C$ in $\cM$ and elements $i \in C$, such that if $X \hookrightarrow \bA_k^n \times_k S$ is the closed subscheme defined by the ideal generated by
\[
	\{\sum_{i \in C} a_i^C x_i \, | \, \text{$C$ a cricuit in $\cM$}\} \subset A[x_1, \dots, x_n],
\]
then
\begin{enumerate}[(i)]

\item for each $s \in S(k)$, there exists $\cA \in \Gr_\cM(k)$ such that the fiber of $X$ over $s$ is equal to $X_\cA$ as a closed subscheme of $\bA_k^n$, and

\item there exist $s_1, s_2 \in S(k)$ such that the fiber of $X$ over $s_1$ (resp. $s_2$) is equal to $X_{\cA_1}$ (resp. $X_{\cA_2}$) as a closed subscheme of $\bA_k^n$.

\end{enumerate}
\end{lemma}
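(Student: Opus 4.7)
The plan is to pull back the universal family from \autoref*{universalcircuitforms} along an appropriate morphism $S \to \Gr_\cM$ whose image lies in the irreducible component containing both $\cA_1$ and $\cA_2$.

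Let $Y \subset \Gr_\cM$ denote the irreducible component (with reduced structure) containing $\cA_1$ and $\cA_2$. By \autoref*{universalcircuitforms}, restricting the units $\{a_i^C\}$ to $Y$ yields a family $X_Y \hookrightarrow \bA_k^n \times_k Y$ whose fiber over each $\cA \in Y(k)$ is $X_\cA$. If $\cA_1 = \cA_2$, take $S = \Spec k$ with the morphism $S \to Y$ corresponding to $\cA_1$, and pull back. Otherwise, I would produce an irreducible curve $C \subset Y$ containing both $\cA_1$ and $\cA_2$: since $Y$ is irreducible and quasi-projective over an algebraically closed field, this is standard, obtained by repeatedly cutting $Y$ (or its projective closure) with general hyperplanes in a linear system passing through $\cA_1$ and $\cA_2$. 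Let $\widetilde{C} \to C$ denote the normalization, which is smooth and irreducible, and pick $k$-points $\tilde{s}_1, \tilde{s}_2 \in \widetilde{C}(k)$ mapping to $\cA_1, \cA_2$ respectively. Let $S \subset \widetilde{C}$ be any affine open containing both $\tilde{s}_1$ and $\tilde{s}_2$; then $S$ is smooth, irreducible (hence connected), affine, nonempty, and of finite type over $k$.

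Next, pulling back the units $\{a_i^C\}$ along the composition $S \hookrightarrow \widetilde{C} \to C \hookrightarrow Y \hookrightarrow \Gr_\cM$ yields elements of $A = \mathcal{O}_S(S)$ which remain units, since nowhere-vanishing sections pull back to nowhere-vanishing sections and a nowhere-vanishing section of an affine scheme is a unit in its coordinate ring. By base change, the closed subscheme $X \hookrightarrow \bA_k^n \times_k S$ defined by the pulled-back linear forms has fiber $X_\cA$ over each $s \in S(k)$, where $\cA$ is the image of $s$ in $\Gr_\cM(k)$. In particular, $s_1 = \tilde{s}_1$ and $s_2 = \tilde{s}_2$ witness the second condition of the lemma.

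The main obstacle will be producing the irreducible curve $C$ through the two chosen points. This is a classical fact (any two closed points of an irreducible quasi-projective variety over an algebraically closed field lie on an irreducible curve), provable by induction on $\dim Y$ using a version of Bertini's theorem: for an irreducible projective variety of dimension at least two, a general hyperplane section constrained to pass through two fixed points is irreducible and of dimension one less. All remaining steps — restriction to an irreducible component, normalization, passage to a smooth affine open, and pullback of units — are routine.
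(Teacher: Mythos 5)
Your proposal follows essentially the same route as the paper: produce an irreducible curve in $\Gr_\cM$ through $\cA_1$ and $\cA_2$, normalize to get a smooth curve, pass to an affine open containing preimages of the two points, and pull back the universal units from \autoref*{universalcircuitforms}. The one spot where you gloss over the real content is the existence of the irreducible curve through two prescribed points: the Bertini statement you invoke (a general hyperplane section constrained through two fixed points is still irreducible) is not the classical Bertini irreducibility theorem, since the linear system now has base points and the naive version can fail in positive characteristic; the paper handles exactly this by citing \cite[Corollary 1.9]{CharlesPoonen}.
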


\begin{proof}
We will show that there exists an affine nonempty connected smooth finite type scheme $S = \Spec(A)$ over $k$ and a morphism $S \to \Gr_\cM$ whose image contains both $\cA_1$ and $\cA_2$. If such a morphism exists, then we may set $\{a_i^C\}$ to be the pullbacks of the units shown to exist in \autoref*{universalcircuitforms}, and our proof would be done.

Let $S_1$ be an integral curve inside $\Gr_\cM$ containing $\cA_1$ and $\cA_2$. Such a curve exists, for example, by \cite[Corollary 1.9]{CharlesPoonen}. Let $S_2$ be the normalization of $S_1$, and let $S_2 \to \Gr_\cM$ be the composition of the normalization map with the inclusion of $S_1$ into $\Gr_\cM$. Then $S_2$ is a smooth connected curve and $\cA_1, \cA_2$ are in the image of $S_2 \to \Gr_\cM$. Now let $S$ be an open affine in $S_2$ containing points that get mapped to $\cA_1$ and $\cA_2$, and we are done.
\end{proof}

For the remainder of this section, let $P$ be a $\Z[\bL]$-module and let $\nu: K_0^{\mu_n}[\Var_k] \to P$ be a $\Z[\bL]$-module morphism that is constant on smooth projective families with $\mu_n$-action.

\begin{proposition}
\label{samecomponentschubertcellsameadditiveinvariantforcompactification}
Let $\cA_1, \cA_2 \in \Gr_\cM(k)$ be in the same connected component of $\Gr_\cM$. Then
\[
	\nu[F_{\cA_1}^\Delta, \mu_n] = \nu[F_{\cA_2}^\Delta, \mu_n].
\]
\end{proposition}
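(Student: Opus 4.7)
The plan is to exhibit $F_{\cA_1}^\Delta$ and $F_{\cA_2}^\Delta$ as two fibers of a single smooth projective $\mu_n$-equivariant morphism over a connected base with trivial $\mu_n$-action, and then invoke the hypothesis that $\nu$ is constant on such families.

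First I reduce to the case that $\cA_1$ and $\cA_2$ lie in a common \emph{irreducible} component of $\Gr_\cM$. A connected component is a finite union of irreducible components, and any two points in it can be joined by a chain whose consecutive members share an irreducible component, so it suffices to prove the equality for pairs inside a single irreducible component. In that case \autoref{familyofmilnorfibers} produces an affine nonempty connected smooth finite type $k$-scheme $S = \Spec(A)$, a family of units $\{a_i^C\} \subset A^\times$ indexed by circuits $C$ of $\cM$ and $i \in C$, and points $s_1, s_2 \in S(k)$ such that the closed subscheme $X \hookrightarrow \bA_k^n \times_k S$ defined by $\{\sum_{i\in C} a_i^C x_i\}_C$ has every fiber of the form $X_\cA$ with $\cA \in \Gr_\cM(k)$, with the fibers over $s_1$ and $s_2$ equal to $X_{\cA_1}$ and $X_{\cA_2}$.

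Next I intersect with $\{x_1 \cdots x_n = 1\}$ and restrict to $\bG_{m,k}^n \times_k S$ to obtain the Milnor-fiber family whose fibers are the $F_\cA$. With $\Delta \subset \overline{\Delta}$ supplied by \autoref{constructingmilnorcompactificationfan} and defining functions $\{\sum_{i\in C} a_i^C x_i\}_C \cup \{x_1 \cdots x_n - 1\}$ (all of whose nonzero coefficients lie in $A^\times$), I verify the hypotheses of \autoref{schoncompinfamily} with $d$ there replaced by $d-1$. Pure dimension $d-1$ and sch\"{o}nness of each fiber follow from \autoref{initialmilnorismilnorinitial}, \autoref{tropicalizationofmilnorfiber}, and the characteristic assumption; the equality $\Trop(F_\cA) = \Trop(\cM) \cap (1,\dots,1)^\perp = |\Delta|$ is \autoref{tropicalizationofmilnorfiber}; constancy of initial forms on cone interiors of $\overline{\Delta}$ reduces fiberwise to properties (\ref*{contructingmilnorcompactificationfanmonomialminus1}) and (\ref*{constructingmilnorcompactificationfancircuitforms}) of $\overline{\Delta}$, using that for each $s \in S(k)$ the linear form $\sum_{i \in C} a_i^C(s) x_i$ is a nonzero scalar multiple of $L_C^{\cA(s)}$; and the uniform Gr\"{o}bner-type generating condition is delivered by property (\ref*{constructingmilnorcompactificationfanwmaximal}) together with \autoref{grobnercompleteintersectionbasismilnorfiber}. \autoref{schoncompinfamily} then produces a closed subscheme $F^{\overline{\Delta}} \hookrightarrow Y(\overline{\Delta}) \times_k S$, smooth over $S$, whose fiber over each $s \in S(k)$ equals $F_\cA^\Delta \subset Y(\Delta) \subset Y(\overline{\Delta})$. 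Since $Y(\overline{\Delta})$ is projective, $F^{\overline{\Delta}} \to S$ is projective.

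Finally, the $\mu_n$-action on $\bG_{m,k}^n$ extends to $Y(\overline{\Delta})$ through the subtorus inclusion and acts trivially on $S$. Each defining function $\sum_{i\in C} a_i^C x_i$ is homogeneous of degree $1$, so $\mu_n$ rescales it by a single character, and $x_1\cdots x_n - 1$ is outright $\mu_n$-invariant because $\xi^n = 1$; hence the ideal sheaf cutting out $F^{\overline{\Delta}}$ is $\mu_n$-stable and $F^{\overline{\Delta}} \to S$ is a smooth projective $\mu_n$-equivariant morphism with trivial action on $S$. Evaluating the hypothesis on $\nu$ at $s_1, s_2$ yields $\nu[F_{\cA_1}^\Delta, \mu_n] = \nu[F_{\cA_2}^\Delta, \mu_n]$. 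The main technical obstacle is the fiberwise verification of the hypotheses of \autoref{schoncompinfamily}, particularly the uniform Gr\"{o}bner condition on each cone of $\overline{\Delta}$; this is precisely what the careful design of $\overline{\Delta}$ in \autoref{constructingmilnorcompactificationfan} was arranged to provide.
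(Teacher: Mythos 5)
Your proposal is correct and follows essentially the same route as the paper: reduce to a common irreducible component, invoke \autoref{familyofmilnorfibers} to build the family, apply \autoref{schoncompinfamily} with the fan from \autoref{constructingmilnorcompactificationfan}, and conclude by the constancy hypothesis on $\nu$. The one small variation is in establishing $\mu_n$-invariance of $F^{\overline{\Delta}}$: you argue directly that the local ideal sheaf generators $\chi^u f_i$ are $\mu_n$-eigenfunctions, whereas the paper uses the fact that $F^{\overline{\Delta}}$ is reduced (being smooth over a smooth base) to reduce the check to $k$-points and then appeals to the fiberwise invariance of $F_{\cA_s}^\Delta$; both are valid.
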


\begin{proof}
We may assume that $\cA_1$ and $\cA_2$ are in the same irreducible component of $\Gr_\cM$. Then we may let $S= \Spec(A)$ and $\{a_i^C\}$ be as in the conclusion of \autoref*{familyofmilnorfibers}. For each circuit $C$ in $\cM$, set
\[
	L_C = \sum_{i \in C} a_i^Cx_i \in A[x_1, \dots, x_n],
\]
and let $X \hookrightarrow \bA_k^n \times_k S$ be the closed subscheme defined by the ideal generated by
\[
	\{L_C \, | \, \text{$C$ a circuit in $\cM$}\} \subset A[x_1, \dots, x_n].
\]
For each $s \in S(k)$, let $\cA_s \in \Gr_\cM(k)$ be such that the fiber of $X$ over $s$ is equal to $X_{\cA_s}$ as a closed subscheme of $\bA_k^n$.

For each $s \in S(k)$ and each circuit $C$ in $\cM$, the restriction of $L_C$ to the fiber of $\bA_k^n \times_k S$ over $s$ is a linear form in the ideal of $X_{\cA_s}$ supported on the coordinates indexed by $C$, so this restriction is equal to a nonzero scalar multiple of $L_C^{\cA_s}$.

Now let $F \hookrightarrow \bG_{m,k}^n \times_k S$ be the closed subscheme defined by the ideal generated by the set
\[
	\{L_C \, | \, \text{$C$ a circuit in $\cM$}\} \cup \{x_1 \cdots x_n - 1\} \subset A[x_1^{\pm 1}, \dots, x_n^{\pm 1}].
\]
Then for each $s \in S(k)$, the fiber of $F$ over $s$ is equal to $F_{\cA_s}$ as a closed subscheme of $\bG_{m,k}^n$. Therefore by \autoref*{schoncompinfamily} and our choice of $\Delta, \overline{\Delta}$, there exists a closed subscheme $F^{\overline{\Delta}} \hookrightarrow Y(\overline{\Delta}) \times_k S$ that is smooth over $S$ and such that for each $s \in S(k)$, the fiber of $F^{\overline{\Delta}}$ over $s$ is equal to $F_{\cA_s}^\Delta$ as a closed subscheme of $Y(\Delta) \subset Y(\overline{\Delta})$. Endow $Y(\overline{\Delta}) \times_k S$ with the diagonal $\mu_n$-action induced by the $\mu_n$-action on $Y(\Delta)$ and the trivial $\mu_n$-action on $S$. 

We will show that $F^{\overline{\Delta}}$ is invariant under the $\mu_n$-action on $Y(\overline{\Delta}) \times_k S$. Because $F^{\overline{\Delta}}$ is smooth over the smooth $k$-scheme $S$, we have that $F^{\overline{\Delta}}$ is reduced. Therefore it is sufficient to show that $F^{\overline\Delta}(k)$ is invariant under the $\mu_n$-action on $Y(\overline{\Delta}) \times_k S$. This holds because $F_{\cA_s}^\Delta$ is invariant under the $\mu_n$-action on $Y(\Delta)$ for each $s \in S(k)$. Therefore $F^{\overline\Delta}$ is invariant under the $\mu_n$-action on $Y(\overline\Delta) \times_k S$.

We now endow $F^{\overline{\Delta}}$ with the $\mu_n$-action given by restriction of the $\mu_n$-action on $Y(\overline{\Delta}) \times_k S$. By construction, the smooth projective morphism $F^{\overline\Delta} \to S$ is $\mu_n$-equivariant, where $S$ is given the trivial $\mu_n$-action.  Thus because $S$ is connected, the map
\[
	S(k) \to P: s \mapsto \nu[F_{\cA_s}^\Delta, \mu_n]
\]
is constant. By the choice of $X$ and $S$, there exist $s_1, s_2 \in S(k)$ such that $F_{\cA_{s_1}}^\Delta = F_{\cA_1}^\Delta$ and $F_{\cA_{s_2}}^\Delta = F_{\cA_2}^\Delta$, and we are done.

\end{proof}

The following proposition completes the proof of \autoref*{additiveinvariantsmilnorfiber}.

\begin{proposition}
Let $\cA_1, \cA_2 \in \Gr_\cM(k)$ be in the same connected component of $\Gr_\cM$, and suppose that $P$ is torsion-free as a $\Z[\bL]$-module. Then
\[
	\nu[F_{\cA_1}, \mu_n] = \nu[F_{\cA_2}, \mu_n].
\]
\end{proposition}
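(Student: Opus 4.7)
The plan is to induct on $\#\cB(\cM)$. For the base case $\#\cB(\cM) = 1$, loop-freeness forces every singleton to extend to the unique basis, so $n = d$, the Grassmannian locus $\Gr_\cM$ reduces to a single point, and the claim is vacuous. Henceforth assume the asserted equality holds for all loop-free matroids with strictly fewer bases than $\cM$.

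For the inductive step, I would fix fans $\Delta \subset \overline{\Delta}$ as in \autoref*{constructingmilnorcompactificationfan} and choose $w_\sigma \in \relint(\sigma)$ for each $\sigma \in \Delta$, taking $w_{\{0\}} = 0$. Applying $\nu$ to the identity of \autoref*{initialdegenerationscompactificationsequivariantclasses} for both $\cA_1$ and $\cA_2$ and subtracting yields
\[
(\bL-1)^{\dim \Delta} \bigl(\nu[F_{\cA_1}^\Delta, \mu_n] - \nu[F_{\cA_2}^\Delta, \mu_n]\bigr) = \sum_{\sigma \in \Delta} (\bL-1)^{\dim \Delta - \dim \sigma} \bigl(\nu[F_{(\cA_1)_{w_\sigma}}, \mu_n] - \nu[F_{(\cA_2)_{w_\sigma}}, \mu_n]\bigr),
\]
whose left-hand side vanishes by \autoref*{samecomponentschubertcellsameadditiveinvariantforcompactification}. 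I would then split the right-hand sum according to whether $\cM_{w_\sigma} = \cM$. Whenever $\cM_{w_\sigma} \neq \cM$, the strict containment $\cB(\cM_{w_\sigma}) \subsetneq \cB(\cM)$ allows me to invoke the inductive hypothesis together with the first Fact of Section \ref*{preliminarieslinearsubspaces} (which places $(\cA_1)_{w_\sigma}$ and $(\cA_2)_{w_\sigma}$ in the same connected component of $\Gr_{\cM_{w_\sigma}}$), so that summand vanishes. When $\cM_{w_\sigma} = \cM$, the second Fact of Section \ref*{preliminarieslinearsubspaces} identifies $(\cA_j)_{w_\sigma}$ with $\cA_j$, so the summand contributes $(\bL-1)^{\dim \Delta - \dim \sigma}\bigl(\nu[F_{\cA_1}, \mu_n] - \nu[F_{\cA_2}, \mu_n]\bigr)$.

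Collecting the surviving terms collapses the identity to $Q(\bL) \cdot \bigl(\nu[F_{\cA_1}, \mu_n] - \nu[F_{\cA_2}, \mu_n]\bigr) = 0$ in $P$, where $Q(\bL) = \sum_\sigma (\bL-1)^{\dim \Delta - \dim \sigma}$ with the sum over $\sigma$ with $\cM_{w_\sigma} = \cM$. Because $\sigma = \{0\}$ always belongs to this set and contributes $(\bL-1)^{\dim \Delta}$, rewriting $Q$ in terms of $y = \bL - 1$ exhibits a polynomial with nonnegative coefficients and leading term $y^{\dim \Delta}$, hence a nonzero element of $\Z[\bL]$. Torsion-freeness of $P$ over $\Z[\bL]$ therefore forces $\nu[F_{\cA_1}, \mu_n] = \nu[F_{\cA_2}, \mu_n]$, completing the induction. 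The main subtlety I anticipate is the bookkeeping for the split: checking that every $\sigma$ with $\cM_{w_\sigma} \neq \cM$ really does yield a matroid strictly smaller than $\cM$ so that induction applies, and that $Q(\bL)$ is confirmed nonzero in $\Z[\bL]$ so that torsion-freeness finishes the argument.
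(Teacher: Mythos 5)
Your argument is essentially identical to the paper's proof: both induct on $\#\cB(\cM)$, apply $\nu$ to the identity of \autoref*{initialdegenerationscompactificationsequivariantclasses}, use \autoref*{samecomponentschubertcellsameadditiveinvariantforcompactification} to equate the compactified classes, kill the $\cM_{w_\sigma}\neq\cM$ terms by the inductive hypothesis (together with the Facts from Section~\ref*{preliminarieslinearsubspaces}), and then observe that $q(\bL)\neq 0$ so torsion-freeness finishes. Your extra remarks — that $\cB(\cM_{w_\sigma})\subsetneq\cB(\cM)$ when $\cM_{w_\sigma}\neq\cM$ (so the induction is well-founded), and that the zero cone gives the unique top-degree term $(\bL-1)^{\dim\Delta}$ in $q(\bL)$ — are correct and just make explicit what the paper compresses into ``by its definition, we see that $q(\bL)\neq 0$.''
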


\begin{proof}
We will prove the proposition by induction on the number of bases of $\cM$. For each $\sigma \in \Delta$, choose some $w_\sigma \in \relint(\sigma)$. Note that for all $\sigma \in \Delta$, we have that $(\cA_1)_{w_\sigma}$ and $(\cA_2)_{w_\sigma}$ are in the same connected component of $\Gr_{\cM_{w_\sigma}}$. Thus by induction, for all $\sigma \in \Delta$ such that $\cM_{w_\sigma} \neq \cM$,
\[
	\nu[F_{(\cA_1)_{w_\sigma}}, \mu_n] = \nu[F_{(\cA_2)_{w_\sigma}}, \mu_n].
\]
Now set
\[
	q(\bL) = \sum_{\substack{\sigma \in \Delta\\ \cM_{w_\sigma}= \cM}} (\bL-1)^{\dim\Delta - \dim\sigma} \in \Z[\bL],
\]
set
\[
	q_\Delta(\bL) = (\bL-1)^{\dim\Delta} \in \Z[\bL],
\]
and for each $\sigma \in \Delta$, set
\[
	q_\sigma(\bL) = (\bL-1)^{\dim\Delta-\dim\sigma} \in \Z[\bL].
\]
Then by Propositions \ref*{initialdegenerationscompactificationsequivariantclasses} and \ref*{samecomponentschubertcellsameadditiveinvariantforcompactification},
\begin{align*}
	q(\bL) \cdot  \nu[F_{\cA_1}, \mu_n] &= q_\Delta(\bL) \cdot \nu[F_{\cA_1}^\Delta, \mu_n] - \sum_{\substack{\sigma \in \Delta\\ \cM_{w_\sigma} \neq \cM}} q_\sigma(\bL) \cdot \nu[F_{(\cA_1)_{w_\sigma}}, \mu_n]\\
	&= q_\Delta(\bL) \cdot \nu[F_{\cA_2}^\Delta, \mu_n] - \sum_{\substack{\sigma \in \Delta\\ \cM_{w_\sigma} \neq \cM}} q_\sigma(\bL) \cdot \nu[F_{(\cA_2)_{w_\sigma}}, \mu_n]\\
	&= q(\bL) \cdot \nu[F_{\cA_2}, \mu_n].
\end{align*}
By its definition, we see that $q(\bL) \neq 0$. Therefore,
\[
	\nu[F_{\cA_1}, \mu_n] = \nu[F_{\cA_2}, \mu_n].
\]
\end{proof}

\bibliographystyle{alpha}
\bibliography{HAMHNMF}

\end{document}